\newcommand{\C}{\ensuremath{\mathbb{C}}}%
\newcommand{\R}{\ensuremath{\mathbb{R}}}%
\newcommand{\Z}{\ensuremath{\mathbb{Z}}}%
\newcommand{\sphere}{\ensuremath{\mathbb{S}}}%
\theoremstyle{definition}
\newtheorem{thm}{Theorem}[section]
\newtheorem{dfn}[thm]{Definition}
\newtheorem{lemma}[thm]{Lemma}
\newtheorem{prop}[thm]{Proposition}
\newtheorem{rem}[thm]{Remark}
\author{Tim de Laat}
\thanks{TdL is a Postdoctoral Fellow of the Research Foundation -- Flanders (FWO) and was partially supported by the Belgian Interuniversity Attraction Pole P07/18, and MdlS is partially supported by ANR grants OSQPI and NEUMANN}
\address{Tim de Laat, KU Leuven, Department of Mathematics
\newline Celestijnenlaan 200B -- box 2400, B-3001 Leuven, Belgium}
\email{tim.delaat@wis.kuleuven.be}
\author{Mikael de la Salle}
\address{Mikael de la Salle, CNRS-ENS de Lyon, UMPA UMR 5669
\newline F-69364 Lyon cedex 7, France
}
\email{mikael.de.la.salle@ens-lyon.fr}
\title[Noncommutative $L^p$-spaces and expanders]{Approximation properties for noncommutative $L^p$-spaces of high rank lattices and nonembeddability of expanders}
\begin{document}

\begin{abstract}
This article contains two rigidity type results for $\mathrm{SL}(n,\Z)$ for large $n$ that share the same proof. Firstly, we prove that for every $p \in [1,\infty]$ different from $2$, the noncommutative $L^p$-space associated with $\mathrm{SL}(n,\Z)$ does not have the completely bounded approximation property for sufficiently large $n$ depending on $p$.

The second result concerns the coarse embeddability of expander families constructed from $\mathrm{SL}(n,\Z)$. Let $X$ be a Banach space and suppose that there exist $\beta < \frac{1}{2}$ and $C>0$ such that the Banach-Mazur distance to a Hilbert space of all $k$-dimensional subspaces of $X$ is bounded above by $C k^\beta$. Then the expander family constructed from $\mathrm{SL}(n,\Z)$ does not coarsely embed into $X$ for sufficiently large $n$ depending on $X$.

More generally, we prove that both results hold for lattices in connected simple real Lie groups with sufficiently high real rank.
\end{abstract}

\maketitle

\section{Introduction}
The aim of this article is to prove two rigidity type results for $\mathrm{SL}(n,\mathbb{Z})$ for large $n$, and more generally for high rank lattices, that share the same proof.

Let $\Gamma$ be a countable discrete group, and let $L(\Gamma)$ be the group von Neumann algebra of $\Gamma$. For every $p \in (1,\infty)$, we can form the noncommutative $L^p$-space associated with $\Gamma$ by taking the completion of $L(\Gamma)$ with respect to the norm $\|.\|_p$ coming from the trace $\tau$ on $L(\Gamma)$ via $\|x\|_p=\tau((x^*x)^{\frac{p}{2}})^{\frac{1}{p}}$. A noncommutative $L^p$-space has the completely bounded approximation property (CBAP) if there exists a net of finite-rank maps on it that is uniformly bounded in the completely bounded norm and that approximates the identity map pointwise. We refer to Section \ref{sec=preliminaries} for more details on group von Neumann algebras, noncommutative $L^p$-spaces and the CBAP.

Until recently, no explicit examples of noncommutative $L^p$-spaces without the CBAP were known. The first explicit examples of such spaces were given by Lafforgue and the second named author in \cite{lafforguedelasalle}. They proved that for $n \geq 3$ and $p \in [1,\frac{4}{3}) \cup (4,\infty]$, the noncommutative $L^p$-spaces $L^p(L(\mathrm{SL}(n,\mathbb{Z})))$ do not have the CBAP. This result was extended in \cite{delaat1} and \cite{haagerupdelaat2}. From these two articles, it follows that for every lattice $\Gamma$ in a connected simple real Lie group with real rank at least $2$ and every $p \in [1,\frac{12}{11}) \cup (12,\infty]$, the space $L^p(L(\Gamma))$ does not have the CBAP. In fact, this is even true for $p \in [1,\frac{10}{9}) \cup (10,\infty]$, as follows from \cite[Appendix A]{delaatdelasalle}.

The first main result of this article deals with the CBAP for the noncommutative $L^p$-spaces associated with the group $\mathrm{SL}(n,\mathbb{Z})$.
\begin{thm} \label{thm=mainthmlp}
  Let $n \geq 3$, and let $r \geq 2n-3$. Then the noncommutative $L^p$-space $L^p(L(\mathrm{SL}(r,\mathbb{Z})))$ does not have the CBAP for $p \in \left[1,2-\frac 2 n\right) \cup \left(2+\frac 2 {n-2},\infty\right]$.
\end{thm}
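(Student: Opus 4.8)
The plan is to exploit the standard "induction/transfer" mechanism: a CBAP bound for $L^p(L(\mathrm{SL}(r,\mathbb Z)))$ should force a corresponding approximation property for the ambient Lie group $\mathrm{SL}(r,\mathbb R)$, and then use the fact that $\mathrm{SL}(r,\mathbb R)$ with $r\geq 2n-3$ contains (a slight distortion of) $\mathrm{SL}(n,\mathbb R)$ as a closed subgroup so that a rigidity obstruction for $\mathrm{SL}(n,\mathbb R)$ propagates up. Concretely, I would first recall from \cite{lafforguedelasalle}, \cite{delaat1}, \cite{haagerupdelaat2}, \cite{delaatdelasalle} the notion of property $(\mathrm{AP}_{p,\mathrm{cb}}^{Schur})$ (or the relevant $p$-operator-space variant), show that if $L^p(L(\Gamma))$ has the CBAP then $\Gamma$ has this approximation property, and that this passes to lattices (so $\mathrm{SL}(r,\mathbb Z)$ having it implies $\mathrm{SL}(r,\mathbb R)$ does, via the fact that a lattice in $G$ has the property iff $G$ does).

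Next I would prove the key quantitative input: for $\mathrm{SL}(n,\mathbb R)$ there is an obstruction to $(\mathrm{AP}_{p,\mathrm{cb}}^{Schur})$ precisely when $p$ lies outside the interval $[2-\tfrac2n,\,2+\tfrac2{n-2}]$. This is where the arithmetic $2-\frac2n$ comes from, and I expect it to be the heart of the matter. The mechanism is the analysis of $K$-bi-invariant (radial/spherical) Schur multipliers on $G=\mathrm{SL}(n,\mathbb R)$: one restricts to a copy of $\mathrm{SL}(2,\mathbb R)$ sitting in a corner, or rather works on the sphere $\mathbb S^{n-1}$ on which $\mathrm{SO}(n)$ acts, and studies the $L^p$-boundedness of the associated spherical-harmonics projections. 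The critical exponents are governed by the $L^p\to L^p$ norms of the projections $P_j$ onto degree-$j$ spherical harmonics on $\mathbb S^{n-1}$, which blow up exactly outside $\bigl(\tfrac{2(n-1)}{n},\,\tfrac{2(n-1)}{n-2}\bigr)$-type ranges; after the standard bookkeeping this yields the stated endpoints $2-\frac2n$ and $2+\frac2{n-2}$. I would set up a sequence of $K$-bi-invariant functions $\varphi_k$ on $G$ (approximating $1$ but with controlled support) and show that any would-be completely bounded approximating net, tested against these, produces uniformly bounded Schur multipliers on $\mathbb S^{n-1}$ of a type that cannot be uniformly bounded in $L^p$ for $p$ outside the interval — contradicting the CBAP.

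Then I would handle the descent from rank $r$ to rank $n$: since $\mathrm{SL}(r,\mathbb R)\supseteq \mathrm{SL}(n,\mathbb R)\times\mathrm{SL}(r-n,\mathbb R)$ (block diagonally), and more economically since the relevant spherical-analysis obstruction on $\mathbb S^{r-1}$ already "contains" the one on $\mathbb S^{n-1}$ once $r-1\geq 2n-4$, i.e. $r\geq 2n-3$, the failure of the approximation property for the $n$-dimensional sphere forces its failure for the $r$-dimensional one. I would make this precise by the restriction-of-multipliers argument: a $K_r$-bi-invariant multiplier on $\mathrm{SL}(r,\mathbb R)$ restricts to one on the subgroup, and its cb-$L^p$-multiplier norm dominates that of the restriction, so a uniform bound upstairs gives a uniform bound downstairs, which we have just shown is impossible.

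**The hard part will be** the quantitative spherical-harmonics estimate — establishing the sharp constants $2-\frac2n$ and $2+\frac2{n-2}$ rather than some weaker interval — and verifying that the "completely bounded" hypothesis on $L^p(L(\Gamma))$ really does transfer to a genuine $K$-bi-invariant cb-Schur-multiplier bound with constants independent of the approximating net (the passage through the Haagerup–Pisier–Ruan type framework for $L^p$-operator spaces, and through the lattice-to-group reduction, must be done with uniform control). I would also need to check carefully that the degenerate case $n=3$ (where $2-\frac2n=\tfrac43$ and $2+\frac2{n-2}=4$, recovering the original range of \cite{lafforguedelasalle}) and the condition $r\geq 2n-3$ are exactly what the spherical geometry on $\mathbb S^{r-1}$ demands; everything else is bookkeeping built on the cited results.
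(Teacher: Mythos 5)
Your overall skeleton (CBAP implies $\mathrm{AP}_{p,\mathrm{cb}}^{\mathrm{Schur}}$ for the lattice, hence for the ambient Lie group by the lattice equivalence, then an obstruction to $\mathrm{AP}_{p,\mathrm{cb}}^{\mathrm{Schur}}$ coming from spherical analysis on $\mathbb{S}^{n-1}$) matches the paper. But the central step as you have formulated it is wrong, and the error is precisely where the exponent and the condition $r \geq 2n-3$ come from.

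You claim ``for $\mathrm{SL}(n,\mathbb R)$ there is an obstruction to $\mathrm{AP}_{p,\mathrm{cb}}^{\mathrm{Schur}}$ precisely when $p \notin [2-\tfrac{2}{n}, 2+\tfrac{2}{n-2}]$,'' and then propose to transfer this to $\mathrm{SL}(r,\mathbb{R})$, $r \geq 2n-3$, by restricting bi-invariant multipliers down to a copy of $\mathrm{SL}(n,\mathbb R)$. This would, if correct, establish the statement for every $r \geq n$; but the case $r=n$ (indeed even $r=3$, $n=3$) is explicitly raised in the paper as an open problem. In fact the paper \emph{does not} prove failure of $\mathrm{AP}_{p,\mathrm{cb}}^{\mathrm{Schur}}$ for $\mathrm{SL}(n,\mathbb R)$ at these sharper exponents. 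What the $\mathbb{S}^{n-1}$ analysis actually yields is only a \emph{local H\"older estimate}: for $\mathrm{SO}(n)$-bi-invariant $S^p$-multipliers on a copy of $\mathrm{GL}(n,\mathbb R)$, one controls $|\varphi(\mathrm{diag}(e^v,e^u,e^t,\dots)) - \varphi(\mathrm{diag}(e^{v+\delta},e^{u-\delta},e^t,\dots))|$ by a term decaying like $e^{-\alpha_p(u-t-\delta)}$ (Lemma~\ref{lemma=multipliers_SLn} in the paper). Such an estimate, by itself, does not force a single $K$-bi-invariant multiplier on $\mathrm{SL}(n,\mathbb R)$ to converge at infinity. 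The condition $r \geq 2n-3$ is not a ``sphere of high enough dimension contains $\mathbb{S}^{n-1}$'' phenomenon: it is a \emph{rank} requirement, needed so that the diagonal has $2(n-2)+1$ slots, allowing one to apply the local H\"older estimate in the two directions $u-\delta,\,v+\delta/(n-2)$ and its Cartan-dual $u+\delta,\,t-\delta/(n-2)$ simultaneously, chaining them along a path $D(t,0,-t)$ going to infinity in the Weyl chamber so that the Cauchy criterion gives a limit (Lemmas~\ref{lemma=multipliers_SLr}, \ref{lemma=multipliers_SLr_bis}, Proposition~\ref{prop=invariant_multiplier_estimate}). Without that chaining argument, the local estimate yields nothing, and a restriction argument alone cannot produce the conclusion. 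Your proposal therefore omits the entire mechanism that makes the higher rank necessary.

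A secondary, smaller point: the paper's estimate on $\mathbb{S}^{n-1}$ is not an $L^p$-boundedness statement for the projections onto degree-$j$ spherical harmonics. It is a Schatten $S^p$-class estimate for the \emph{averaging operators} $T_\delta$ over latitude circles $\{y : \langle x,y \rangle = \delta\}$, computed via the Gegenbauer eigenvalues $\varphi_k(\delta)$ and the multiplicities $m_k \sim k^{n-2}$; the exponent $2+\tfrac{2}{n-2}$ appears as the threshold for summability of $\sum_k m_k |\varphi_k(\delta)|^p$, and the H\"older continuity of $T_\delta$ in $\delta$ (in $S^p$-norm) is what feeds into the local multiplier estimate. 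This is related in spirit to what you wrote, but the actual input, and consequently the way it is used, is different.
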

Note that this result extends the result of Lafforgue and the second named author mentioned above. An analogue of Theorem \ref{thm=mainthmlp} in the non-Archimedean setting was already known from \cite{lafforguedelasalle}. Theorem \ref{thm=mainthmlp} was therefore expected. In fact, it answers one of the questions left open in \cite{lafforguedelasalle}. The following essential question remains open.

{\begin{center} {\bf{Question:}} Does $L^p(L(\mathrm{SL}(3,\Z)))$ have the CBAP for some $p \neq 2$?\end{center}}

The importance of this question is its relation with the (non-)isomorphism problem of the group von Neumann algebras of $\mathrm{PSL}(n,\Z)$ for different values of $n \geq 3$, which is a deep open problem going back to \cite{connes}. Indeed, an affirmative answer to the question above would imply that $L(\mathrm{SL}(3,\Z))$ is not isomorphic to $L(\mathrm{SL}(n,\Z))$ for certain values of $n \geq 4$.
\begin{rem}
  From Theorem \ref{thm=mainthmlp}, it follows that for every $p \neq 2$, the noncommutative $L^p$-space associated with any countable discrete group containing $\mathrm{SL}(n,\mathbb{Z})$ as a subgroup for every $n \geq 3$ does not have the CBAP. There are several ways to construct such a group. In particular, there are finitely presented examples \cite{chatterjikassabov}.
\end{rem}
The proof of Theorem \ref{thm=mainthmlp} proceeds in the same way as the proof of \cite[Theorem B]{lafforguedelasalle}, part of which was itself inspired by \cite{lafforguestrengthenedpropertyt}. However, the computations needed in this article are significantly more involved. The idea is as follows. Firstly, we use a result proved in \cite{lafforguedelasalle}, asserting that if $\Gamma$ is a lattice in a locally compact group $G$ and $L^p(L(\Gamma))$ has the CBAP for some $p \in (1,\infty)$, then $G$ has the so-called $\mathrm{AP}_{p,\mathrm{cb}}^{\textrm{Schur}}$ for that value of $p$ (see Section \ref{sec=preliminaries} for the definition of the $\mathrm{AP}_{p,\mathrm{cb}}^{\textrm{Schur}}$ and details). The $\mathrm{AP}_{p,\mathrm{cb}}^{\textrm{Schur}}$ was introduced in \cite{lafforguedelasalle} exactly to this purpose. The strategy then becomes to show the failure of the $\mathrm{AP}_{p,\mathrm{cb}}^{\textrm{Schur}}$ for $G$. The main new ingredient (Proposition \ref{prop=Schatten_class_estimates}) is a result on harmonic analysis on the sphere $\sphere^{n-1}$ for $n \geq 3$, for which a careful study of the spherical functions for the Gelfand pair $(\mathrm{SO}(n),\mathrm{SO}(n-1))$ is needed.

A more general version of Theorem \ref{thm=mainthmlp} for lattices in connected simple real Lie groups with high rank is obtained as well (see Theorem \ref{thm=mainthmlp_generalgroups}).\\

We now move to the second main result of this article. Let $S$ be a symmetric finite generating set of $\mathrm{SL}(n,\Z)$, and for $i \geq 1$, let $\pi_i \colon \mathrm{SL}(n,\Z) \rightarrow \mathrm{SL}(n,\Z / i\Z)$ denote the natural surjective homomorphism. As observed by Margulis, the Cayley graphs $(\mathrm{SL}(n,\Z/i\Z),\pi_i(S))_{i \geq 1}$ form an expander family (see Section \ref{sec=preliminaries} for the definition of expander family). It is an open problem whether for (say) $n=3$, this family embeds coarsely in any superreflexive Banach space (see \cite{lafforguestrengthenedpropertyt}, \cite{lafforguefastfourier}, \cite{pisiermemAMS}, \cite{mendelnaor}, \cite{delasalle1} for related results). A Banach space $X$ is superreflexive if every Banach space finitely representable in $X$ is reflexive. Our contibution to this question is that, modulo a classical open problem in Banach space theory, a superreflexive Banach space does not coarsely contain $(\mathrm{SL}(n,\Z/i\Z),\pi_i(S))_{i \geq 1}$ for $n$ large enough. In fact, we prove a non-embeddability result for these expander families in any (not necessarily superreflexive) Banach space satisfying a certain geometric criterion, to be made precise below. The notions from the geometry of Banach spaces that we use in what follows are recalled in Section \ref{subsec=geometrybanachspaces}.

For every Banach space $X$, consider the sequence of real numbers defined by
\[ d_k(X) = \sup\{ d(E,\ell^2_{\dim E}) \mid E \subset X, \dim E \leq k\},\]
where $d$ denotes the Banach-Mazur distance. It is always true that $d_k(X) \leq k^{\frac{1}{2}}$, and if $X$ has type $>1$ (in particular, if $X$ is superreflexive), then $d_k(X)= o(k^{\frac{1}{2}})$. Our results will apply to the Banach spaces $X$ for which
\begin{equation}\label{eq=d_kX_grows_slowly} \exists \beta < \frac 1 2,\,\exists C>0 \textrm{ such that }d_k(X) \leq C k^\beta\textrm{ for all }k\geq 1. \end{equation}
This includes the spaces of type $2$ and, more generally, the ones of type $p$ and cotype $q$ satisfying $\frac 1 p - \frac 1 q < \frac 1 2$. It is a well-known open problem whether all Banach spaces of type $>1$ satisfy \eqref{eq=d_kX_grows_slowly} (see Section \ref{sec=expanders}).

Our second result is as follows (see Theorem \ref{thm=expanders_general} for a more general statement for Schreier graphs coming from high rank lattices).
\begin{thm}\label{thm=expanders} Let $X$ be a Banach space satisfying \eqref{eq=d_kX_grows_slowly}. Then the expander family $(\mathrm{SL}(n,\Z/i\Z),\pi_i(S))_{i \geq 1}$ does not coarsely embed into $X$ for sufficiently large $n$ depending on $X$.
\end{thm}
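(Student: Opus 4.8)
The plan is to derive Theorem~\ref{thm=expanders} from a strong Banach property~(T) for the ambient Lie group, in the spirit of \cite{lafforguestrengthenedpropertyt} and its Banach-space refinements, with Proposition~\ref{prop=Schatten_class_estimates} as the analytic input.

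\emph{Step 1: reduction to a uniform Poincaré inequality.} I would first reduce to proving that there is a constant $C$, \emph{independent of $i$}, such that
\[ \frac{1}{|G_i|^2}\sum_{x,y\in G_i}\|f(x)-f(y)\|_X^2 \;\le\; \frac{C}{|G_i|}\sum_{x\in G_i,\,s\in S}\|f(x)-f(xs)\|_X^2 \qquad \text{for all } f\colon G_i\to X, \]
where $G_i=\mathrm{SL}(n,\Z/i\Z)$. For a family of graphs of bounded degree whose cardinalities tend to infinity the diameters are of order $\log|G_i|$, and for a small enough constant $c>0$ a proportion tending to $1$ of the pairs $(x,y)$ satisfy $d(x,y)\ge c\log|G_i|$; plugging a hypothetical coarse embedding $f_i$ with moduli $\rho_-\le\|f_i(x)-f_i(y)\|\le\rho_+$ into the inequality then bounds $\rho_-(c\log|G_i|)$ by a constant multiple of $\rho_+(1)$, contradicting $\rho_-\to\infty$. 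This last implication is by now classical (cf.\ \cite{mendelnaor}) and I would not reproduce it.

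\emph{Step 2: strong property~(T) with respect to $X$.} To obtain the Poincaré inequality I would prove that, for $n$ large enough depending on $X$, the group $G=\mathrm{SL}(n,\R)$ has the following property: there are probability measures $\mu_t$ on $G$ $(t\ge 0)$, a constant $C$, and $\lambda\in(0,1)$ such that for every Banach space $Y$ finitely representable in $X$ and every strongly continuous representation $\pi\colon G\to\mathcal{B}(Y)$ by isometries, $\|\pi(\mu_t)-P_\pi\|_{\mathcal{B}(Y)}\le C\lambda^t$, where $P_\pi$ is the projection onto the $\pi(G)$-invariant vectors. The construction of $\mu_t$ and the reduction to the maximal compact subgroup follow \cite{lafforguestrengthenedpropertyt}: using the Cartan decomposition $G=K\overline{A^{+}}K$ with $K=\mathrm{SO}(n)$, the operator $\pi(\mu_t)$ is assembled from the $K$-averaging projection and the behaviour of $\pi$ along a suitable one-parameter subgroup, and the point is to control, uniformly over such $Y$, the operators through which the $K$-isotypical components of degree $k$ contribute --- equivalently, through the Gelfand pair $(\mathrm{SO}(n),\mathrm{SO}(n-1))$, the spaces of spherical harmonics of degree $k$ on $\sphere^{n-1}$. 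Proposition~\ref{prop=Schatten_class_estimates} supplies the decay in $k$ of the relevant operators on the Schatten classes, and hypothesis \eqref{eq=d_kX_grows_slowly} lets one transfer these bounds to $Y$ at the cost of a factor $d_k(Y)\le d_k(X)\le Ck^\beta$; since $\beta<\tfrac12$ the resulting series converges, and converges to a quantity $<1$ once $n$ is large --- which is exactly where the dependence of $n$ on $X$ enters.

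\emph{Step 3: from $G$ to the congruence quotients, and the obstacle.} Strong property~(T) with respect to $X$ passes from $G$ to the lattice $\Gamma=\mathrm{SL}(n,\Z)$ by induction of representations, using a short lemma --- itself a consequence of \eqref{eq=d_kX_grows_slowly} --- to the effect that spaces of the form $\ell^2(Y)$ and $L^2(\Gamma\backslash G;Y)$ with $Y$ finitely representable in $X$ still have all their finite-dimensional subspaces uniformly close to Hilbert spaces with the same exponent $\beta$. Applying the resulting statement for $\Gamma$ to the translation representation of $\Gamma$ on $\ell^2_0(\mathrm{SL}(n,\Z/i\Z);X)$ --- which has no invariant vectors and on which $\Gamma$ acts by isometries --- gives $\|\pi(\mu_t)\|_{\mathcal{B}}\le C\lambda^t$ with $C,\lambda$ independent of $i$, i.e.\ a uniform $X$-valued spectral gap; summing the (approximate) $\mu_t$-walk then yields the Poincaré inequality of Step~1. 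The hard part is the harmonic analysis behind Proposition~\ref{prop=Schatten_class_estimates} and its quantitative exploitation: one needs estimates for the spherical functions of $(\mathrm{SO}(n),\mathrm{SO}(n-1))$, and for the norms of the associated operators on $S^p$, with constants explicit enough in $n$ and $k$ to overcome the general bound $d_k(X)\le k^{1/2}$ with room to spare --- precisely what forces $\beta<\tfrac12$ --- and then to convert finitely many such estimates into the convergence $\pi(\mu_t)\to P_\pi$ with a uniform exponential rate. Granting Proposition~\ref{prop=Schatten_class_estimates}, the remaining ingredients (stability of the relevant classes under $\ell^2$- and $L^2$-sections, and the bookkeeping in the reduction to $K$) are routine, if lengthy.
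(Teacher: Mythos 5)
Your overall plan --- a Banach-space version of property~(T) for $\mathrm{SL}(n,\R)$ from the harmonic-analysis input of Proposition~\ref{prop=Schatten_class_estimates} and the assumption $d_k(X)\le Ck^\beta$, followed by Lafforgue's reduction of non-coarse-embeddability to such a property~(T) --- is indeed the paper's plan, and Steps~1 and~3 are carried out along essentially the same lines. The problem is in Step~2.

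There you assert that Lafforgue's strong property~(T) machinery directly yields $\|\pi(\mu_t)-P_\pi\|\le C\lambda^t$, with $P_\pi$ the projection onto the $\pi(G)$-invariant vectors. What Proposition~\ref{prop=Schatten_class_estimates} together with \eqref{eq=d_kX_grows_slowly} actually gives (via Lemma~\ref{lemma=coefficients_Banach_rep_SOn} and Proposition~\ref{prop=invariant_coefficient_estimate}) is that $\pi(m_l)$ is \emph{Cauchy} in operator norm, hence converges to \emph{some} $P\in B(X)$. The estimate controls oscillation of $K$-bi-invariant coefficients along the single ray $D(t,0,-t)$ in the Weyl chamber; it does not identify the limit $P$ as the invariant projection --- there is no bound comparing $\varphi(D(t,0,-t)g)$ with $\varphi(D(t,0,-t))$ uniformly in $g$, which is what one would need to deduce $\pi(g)P=P$ directly as Lafforgue does in the Hilbertian case. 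The paper states explicitly that its estimates do \emph{not} yield Lafforgue's strong property~(T) relative to $X$; the identification of the limit is achieved by a different input, namely Shalom's Howe--Moore property for isometric representations on superreflexive spaces (\cite[Theorem~9.1]{BFGM}): after passing to a $G$-invariant complement of $X^{\pi(G)}$ (\cite[Propositions~2.3, 2.6]{BFGM}), all coefficients vanish at infinity, which forces $P=0$ there. Your proposal omits this ingredient, and without it Step~2 does not close.

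This creates a second, consequent gap. Once Shalom's Howe--Moore theorem is invoked, superreflexivity of $X$ has been assumed, but the statement you are proving requires only \eqref{eq=d_kX_grows_slowly}, which is strictly weaker. Your argument does not address this. The paper handles it via Remark~\ref{rem=TBanachiquebis}: the representations that Lafforgue's embeddability argument (Section~\ref{subsec=coarse_embedd}) actually needs are all of the special form $\pi\otimes 1_X$ on $L^2(\Omega,\mu;X)$ coming from a measure-preserving $G$-action, for which one may instead apply the classical Howe--Moore property of the \emph{unitary} representation $\pi$ to identify the limit of $\pi(m_l)\otimes 1_X$ with $P\otimes 1_X$ for $P$ the orthogonal projection onto the invariant $L^2$-functions; the Cauchy estimate still applies to $L^2(\Omega,\mu;X)$ since $e_k(L^2(\Omega;X))=e_k(X)$. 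You would need to incorporate this (or an equivalent device) to reach the stated generality.
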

First we prove that if $X$ satisfies \eqref{eq=d_kX_grows_slowly} and is superreflexive, then for $n$ sufficiently large, the group $\mathrm{SL}(n,\mathbb{R})$ has a version of property (T) relative to $X$ that was defined in \cite[Section 4]{lafforguestrengthenedpropertyt}. In order to prove this, we find a certain sequence of compactly supported measures $m_k$ on $\mathrm{SL}(n,\mathbb{R})$ such that $(\pi(m_k))_k$ converges for every isometric representation $\pi$ on $X$. The next step is to identify the limit of the sequence $(\pi(m_k))_k$ with a projection onto the $\pi(\mathrm{SL}(n,\mathbb{R}))$-invariant vectors. Here we cannot use the methods of \cite{lafforguestrengthenedpropertyt} (and hence we cannot prove Lafforgue's strong property (T) for $\mathrm{SL}(n,\mathbb{R})$ relative to $X$); instead we use a version of the Howe-Moore property for $\mathrm{SL}(n,\mathbb{R})$ as proved by Shalom (see \cite[Theorem 9.1]{BFGM}). This is where the superreflexivity assumption is used. Afterwards, we explain why the superreflexivity condition is, in fact, not necessary to get Theorem \ref{thm=expanders}, by showing that for a general Banach space $X$ satisfying \eqref{eq=d_kX_grows_slowly} and for $n$ sufficiently large, the group $\mathrm{SL}(n,\mathbb{R})$ has a version of property (T) with respect to a certain class of representations on $X$-valued $L^2$-spaces.

The version of property (T) discussed above passes from a locally compact group to its lattices \cite{lafforguestrengthenedpropertyt}, \cite{lafforguefastfourier}. Also, if a group has this property with respect to a superreflexive Banach space $X$, then the group has property (T$_X$) as defined in \cite{BFGM}. The following result is immediate.
\begin{thm}
  Let $X$ be a superreflexive Banach space satisfying \eqref{eq=d_kX_grows_slowly}. Then the groups $\mathrm{SL}(n,\mathbb{R})$ and $\mathrm{SL}(n,\mathbb{Z})$ have property (T$_X$) for sufficiently large $n$ depending on $X$.
\end{thm}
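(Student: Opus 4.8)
The plan is to assemble the statement from the three facts collected in the introduction above, since the substantive work has already been carried out in establishing the strengthened property (T) relative to $X$. First I would invoke the main intermediate result just described: for a superreflexive Banach space $X$ satisfying \eqref{eq=d_kX_grows_slowly} there is an integer $n_0 = n_0(X)$ such that for all $n \geq n_0$ the group $G = \mathrm{SL}(n,\mathbb{R})$ has the version of property (T) relative to $X$ from \cite[Section 4]{lafforguestrengthenedpropertyt}. Concretely this furnishes compactly supported measures $m_k$ on $G$ whose images $\pi(m_k)$ converge, for every continuous isometric representation $\pi$ of $G$ on $X$ in the admissible class, to the projection onto the subspace of $\pi(G)$-invariant vectors, with $\sup_k \|\pi(m_k)\| < \infty$ and convergence uniform over the class at a controlled rate; the identification of the limit with the invariant projection uses the Howe--Moore-type result of Shalom \cite[Theorem 9.1]{BFGM}, which is where superreflexivity enters.

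Second I would transfer this property to the lattice $\Gamma = \mathrm{SL}(n,\mathbb{Z}) \subset G$. This is the induction mechanism of Lafforgue \cite{lafforguestrengthenedpropertyt, lafforguefastfourier}: given an isometric representation of $\Gamma$ on $X$, one induces it to a representation of $G$ on an $X$-valued $L^2$-space, which remains in the admissible class because the hypothesis \eqref{eq=d_kX_grows_slowly} is stable under passing to such $L^2$-spaces, applies the property for $G$, and restricts back. The convergence of the induced averaging operators yields the corresponding convergence for $\Gamma$, so $\Gamma$ inherits the same version of property (T) relative to $X$.

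Third I would deduce property (T$_X$) in the sense of \cite{BFGM}. Recall that (T$_X$) asks that every continuous affine isometric action of the group on $X$ have a fixed point. Given such an action with linear part $\pi$ and cocycle $b$, one applies the operators $\pi(m_k)$ to $b$; using $\sup_k \|\pi(m_k)\| < \infty$ together with the cocycle identity, the sequence $\pi(m_k) b(g_0)$ for a fixed $g_0$ converges to an invariant vector exhibiting $b$ as a coboundary, hence a fixed point exists. The required uniform spectral gap for representations without invariant vectors is immediate from the same convergence, since the limit projection is then zero. This is precisely the implication ``strengthened property (T) relative to a superreflexive $X$ $\Rightarrow$ property (T$_X$)'' alluded to above, and it applies verbatim to both $G$ and $\Gamma$.

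The main obstacle is not in this final assembly, which is genuinely immediate once the preceding results are in hand, but in the bookkeeping that makes the three steps compatible: one must check that the class of isometric $X$-representations to which the strengthened property (T) applies is preserved under Lafforgue induction and matches the class appearing in the definition of (T$_X$), and that the superreflexivity hypothesis is invoked consistently — via Shalom's Howe--Moore property — so that the limit operator is pinned down as the invariant projection rather than merely an idempotent. Once these points are settled, the theorem follows.
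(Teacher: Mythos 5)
Your proof takes essentially the same route as the paper, which treats this theorem as an immediate corollary of Theorem~\ref{thm=TBanachiqueG} together with the passage to lattices by Lafforgue induction and the implication from the strengthened property to (T$_X$). One small caution: you describe (T$_X$) as the fixed point property for affine isometric actions, but in the paper (following \cite{BFGM}) that is property (F$_X$); property (T$_X$) is the statement that the quotient representation on $X/X^{\pi(G)}$ has no almost invariant vectors. Your ``uniform spectral gap'' sentence is exactly the correct deduction for (T$_X$) as defined, so the substance is right — but the cocycle argument you sketch actually addresses (F$_X$), and as written it is also too loose (one does not simply apply $\pi(m_k)$ to $b(g_0)$; an appropriate averaging of the cocycle is needed). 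Regarding your bookkeeping worry, the class is closed under passing to $X$-valued $L^2$-spaces because $e_k(L^2(\Omega;X)) = e_k(X)$ and superreflexivity passes to $L^2(\Omega;X)$, so Theorem~\ref{thm=TBanachiqueG} applies to the induced representation and the argument closes as you describe.
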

In fact, we prove that a ``non-uniform'' version of the above property is equivalent to property (T$_X$) (see Proposition \ref{prop=comparison_T}).\\

This article is organized as follows. After recalling some preliminaries in Section \ref{sec=preliminaries}, we obtain the aforementioned result on harmonic analysis on $\mathbb{S}^{n-1}$ in Section \ref{sec=sphere}. In Section \ref{sec=slr}, we use this result to prove Theorem \ref{thm=mainthmlp}. In Section \ref{sec=expanders}, we show how the proof of Theorem \ref{thm=mainthmlp} gives rise to Theorem \ref{thm=expanders}. We also include a result (Theorem \ref{thm=pisier}) of Gilles Pisier, relating the constant $d_k(X)$ to the relative Euclidean factorization constant $e_k(X)$ of a Banach space $X$ for all $k \geq 1$. This result is of independent interest as well.

\section*{Acknowledgements}
We thank Gilles Pisier for valuable discussions and for allowing us to include his proof of Theorem \ref{thm=pisier}. We also thank Stefaan Vaes and Alain Valette for useful remarks.

\section{Preliminaries} \label{sec=preliminaries}

\subsection{Group von Neumann algebras}
Let $\Gamma$ be a countable discrete group, and let $\lambda \colon \Gamma \to B(\ell^2(\Gamma))$ be its left regular representation, i.e., the representation of $\Gamma$ given by $(\lambda(g)\xi)(h)=\xi(g^{-1}h)$ for $g,h \in \Gamma$ and $\xi \in B(\ell^2(\Gamma))$. The group von Neumann algebra $L(\Gamma)$ of $\Gamma$ is given by the double commutant (in $B(\ell^2(\Gamma))$) of the set $\{ \lambda(g) \mid g \in \Gamma \}$. The von Neumann algebra $L(\Gamma)$ has a normal faithful trace $\tau$ given by $\tau(x)=\langle x\delta_1,\delta_1 \rangle$, where $\langle .,. \rangle$ denotes the inner product on $\ell^2(\Gamma)$. Group von Neumann algebras are important and motivating examples of von Neumann algebras.

\subsection{Noncommutative $L^p$-spaces and their approximation properties}
Let $M$ be a finite von Neumann algebra with normal faithful trace $\tau$. For $1 \leq p < \infty$, the noncommutative $L^p$-space $L^p(M,\tau)$ is the completion of $M$ with respect to $\|x\|_p=\tau((x^*x)^{\frac{p}{2}})^{\frac{1}{p}}$. For $p=\infty$, we set $L^{\infty}(M,\tau)=M$. In this article, we deal with noncommutative $L^p$-spaces coming from group von Neumann algebras.

Noncommutative $L^p$-spaces are important examples of operator spaces. The operator space structure on a noncommutative $L^p$-space $L^p(M,\tau)$ can be obtained by realizing $L^p(M,\tau)$ as an interpolation space of the couple $(M,L^1(M,\tau))$ (see \cite{kosaki}). An operator space $E$ has the completely bounded approximation property (CBAP) if there is a net $F_{\alpha}\colon E \rightarrow E$ of finite-rank maps with $\sup_{\alpha}\|F_{\alpha}\|_{cb} < \infty$ and $\lim_{\alpha} \|F_{\alpha}x-x\|=0$ for all $x \in E$. An operator space $E$ has operator space approximation property (OAP) if there exists a net $F_{\alpha}$ of finite-rank maps on $E$ such that $\lim_{\alpha} \|(\mathrm{id}_{\mathcal{K}(\ell^2)} \otimes F_{\alpha})x-x\|=0$ for all $x \in \mathcal{K}(\ell^2) \otimes_{\min} E$. If $E$ has the CBAP, it also has the OAP.

\subsection{The $\mathrm{AP}_{p,\mathrm{cb}}^{\textrm{Schur}}$} \label{subsec=apschur}
As mentioned in the introduction, we use the result of Lafforgue and the second named author that relates the CBAP of a noncommutative $L^p$-space to an approximation property of the underlying group. We recall this property below.

Recall that for a Hilbert space $\mathcal H$ and $p \in [1,\infty)$, the Schatten class $S^p(\mathcal H)$ is defined as the Banach space of bounded operators on $\mathcal H$ such that $\|T\|_p := Tr( |T|^p)^{1/p} <\infty$, and $S^{\infty}(\mathcal H)$ is the space $\mathcal{K}(\mathcal{H})$ consisting of compact operators. For a measure space $(X,\mu)$, the class $S^2(L^2(X,\mu))$ can be identified with $L^2(X \times X,\mu \otimes \mu)$. Hence, a function $\psi \in L^\infty(X \times X,\mu \otimes \mu)$ induces a bounded map on $S^2(L^2(X,\mu))$ corresponding to multiplication on $L^2(X \times X,\mu \otimes \mu)$. The function $\psi$ is said to be an $S^p$-multiplier if this map sends $S^p \cap S^2$ into $S^p$ and extends to a bounded map on $S^p$. The norm of this map will be denoted by $\|\psi\|_{M(S^p)}$ and its completely bounded norm by $\|\psi\|_{cbM(S^p)}$.

In the situation that $(X,\mu) = (G,m)$ is a locally compact group with left Haar measure, a function $\varphi \in L^\infty(G,m)$ is said to be an $S^p$-multiplier if the function $(g,h) \mapsto \varphi(g^{-1}h)$ is an $S^p$-multiplier. The corresponding bounded linear map on $S^p(L^2(G,m))$ is called $M_\varphi$. Its norm is denoted by $\|\varphi\|_{M(S^p)}$ and its completely bounded norm by $\|\varphi\|_{cbM(S^p)}$.

Recall that the Fourier algebra $A(G)$ (see \cite{eymard}) of a locally compact group $G$ consists of the coefficients of the left regular representation of $G$: we have $\varphi \in A(G)$ if and only if there exist $\xi,\eta \in L^2(G)$ such that for all $x \in G$ we have $\varphi(x)=\langle \lambda(x)\xi,\eta \rangle$. The norm given by $\|\varphi\|_{A(G)}=\min \{ \|\xi\|\|\eta\| \mid \forall x \in G \; \varphi(x)=\langle \lambda(x)\xi,\eta \rangle \}$, makes it into a Banach space.

Let $1 \leq p \leq \infty$. A locally compact group $G$ is said to have the $\mathrm{AP}_{p,\mathrm{cb}}^{\textrm{Schur}}$ if there exists a net $(\varphi_{\alpha})_{\alpha}$ in $A(G)$ such that $\sup_{\alpha} \|\varphi_{\alpha}\|_{cbMS^p(L^2(G))} < \infty$ and $\varphi_{\alpha} \to 1$ uniformly on compacta.

It is known (see \cite[Theorem 2.5]{lafforguedelasalle}) that if $\Gamma$ is a lattice in a locally compact group $G$, then for $1 \leq p \leq \infty$, the group $\Gamma$ has the $\mathrm{AP}_{p,\mathrm{cb}}^{\textrm{Schur}}$ if and only if $G$ has the $\mathrm{AP}_{p,\mathrm{cb}}^{\textrm{Schur}}$. It is also known that the $\mathrm{AP}_{p,\mathrm{cb}}^{\textrm{Schur}}$ passes to closed subgroups (see \cite[Proposition 2.3]{lafforguedelasalle}) and that it is preserved under local isomorphisms of Lie groups with finite center (see \cite[Proposition 3.11]{delaat1}).

We use the following result (see \cite[Corollary 3.13]{lafforguedelasalle}), relating the CBAP and the OAP to the $\mathrm{AP}_{p,\mathrm{cb}}^{\textrm{Schur}}$: if $p \in (1,\infty)$ and $\Gamma$ is a countable discrete group such that $L^p(L(\Gamma))$ has the OAP, then $\Gamma$ has the $\mathrm{AP}_{p,\mathrm{cb}}^{\textrm{Schur}}$ for that value of $p$. In particular, if $L^p(L(\Gamma))$ has the CBAP, then $\Gamma$ has the $\mathrm{AP}_{p,\mathrm{cb}}^{\textrm{Schur}}$.

We summarize the above results in the following lemma, which is exactly what we use in this article.
\begin{lemma} \label{lem=cbapoapapschur}
  Let $G$ be a locally compact group, and let $\Gamma$ be a lattice in $G$. If $p \in (1,\infty)$ and $G$ does not have the $\mathrm{AP}_{p,\mathrm{cb}}^{\textrm{Schur}}$ (for this $p$), then $L^p(L(\Gamma))$ does not have the CBAP or OAP.
\end{lemma}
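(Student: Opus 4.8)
The plan is to establish the contrapositive by concatenating the two facts recalled just above from \cite{lafforguedelasalle}. First I would observe that a lattice $\Gamma$ in a (second countable) locally compact group $G$ is automatically a countable discrete group, so that $L(\Gamma)$, the noncommutative space $L^p(L(\Gamma))$, and its approximation properties are all well defined, and the results quoted above for countable discrete groups do apply to $\Gamma$.

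Next, assume for contradiction that $L^p(L(\Gamma))$ has the CBAP or the OAP. Since the CBAP implies the OAP (as recalled above), in either case $L^p(L(\Gamma))$ has the OAP. Because $p \in (1,\infty)$, \cite[Corollary 3.13]{lafforguedelasalle} then yields that $\Gamma$ has the $\apschur$ for this value of $p$. Since $\Gamma$ is a lattice in $G$, \cite[Theorem 2.5]{lafforguedelasalle} transfers this to $G$, so that $G$ has the $\apschur$ for this $p$, contradicting the hypothesis. Hence $L^p(L(\Gamma))$ has neither the CBAP nor the OAP.

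There is no genuine obstacle here: the lemma is a bookkeeping statement assembling results already available in the literature. The only point worth checking is that the hypotheses line up -- that $\Gamma$ is countable and discrete (immediate from the definition of a lattice, using second countability of $G$) and that $p$ is taken in the open interval $(1,\infty)$, which is precisely the range in which the OAP-to-$\apschur$ implication of \cite[Corollary 3.13]{lafforguedelasalle} is available. The endpoint cases $p=1,\infty$ are excluded for exactly this reason, and correspondingly they do not appear in the conclusion of this lemma.
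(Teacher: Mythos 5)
Your proof is correct and is essentially the paper's own argument: the paper presents Lemma~\ref{lem=cbapoapapschur} explicitly as a summary of the two facts it has just quoted from \cite{lafforguedelasalle} (the OAP $\Rightarrow$ $\apschur$ implication for discrete groups, and the lattice transfer of the $\apschur$), and your contrapositive chain is precisely that assembly.
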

For completeness, let us also mention a result from \cite{junge} (see also \cite[Theorem 4.2]{jungeruan}), asserting that the CBAP and OAP are equivalent for noncommutative $L^p$-spaces of a QWEP von Neumann algebra. This is not needed for what follows.

\subsection{Expander families}
The graphs we consider in this article are undirected, and we allow loops and multiple edges. Formally, this means that a graph is a pair $\mathcal{G}=(V,E)$ consisting of a set $V$ of vertices and a multiset $E$ of edges, i.e., an edge is a subset of $V$ of cardinality $1$ or $2$. The degree of a vertex $v \in V$ is the number of edges that contain $v$ (with this definition a loop counts as $1$ for the degree). A graph is said to be $k$-regular for some $k \geq 1$ if the degree of every vertex is $k$. The vertex set of a graph can be considered as a metric space with respect to the graph distance.

All graphs we study are finite Schreier graphs of finitely generated groups. They are constructed from a group $\Gamma$ with symmetric finite generating set $S$ and a finite index subgroup $\Lambda < \Gamma$, and denoted by $(\Gamma/\Lambda,S)$. The set of vertices is the coset space $\Gamma/\Lambda$ and an edge $\{v,w\}$ appears with multiplicity equal to the number of elements $s\in S$ such that $sv=w$ (this number is well-defined because $S$ is symmetric). This defines a $k$-regular graph, where $k$ is the number of elements of $S$. In the particular case when $\Lambda$ is a normal subgroup, the Schreier graph $(\Gamma/\Lambda,S)$ is actually a Cayley graph of the group $\Gamma/\Lambda$. There are no loops if $S \cap \Lambda = \emptyset$ and no multiple edges if $S$ maps injectively into the quotient $\Gamma/\Lambda$.

Let $\mathcal{G}=(V,E)$ be a finite graph. The boundary $\partial F$ of a set $F \subset V$ is defined by $\partial F = \{\{v,w\} \in E \mid v \in F, w \in V \setminus F\}$. The Cheeger constant $h(\mathcal{G})$ of the graph $\mathcal{G}$ is defined by
\[
    h(\mathcal{G}) = \min \left\{\frac{|\partial F|}{|F|} \bigg\vert F \subseteq V,\,0 < |F| \leq \frac{1}{2} |V| \right\}.
\]
A graph $\mathcal{G}$ is connected if and only if $h(\mathcal{G}) > 0$.

An expander family is a sequence of finite graphs with strong connectivity properties, which are quantified by the Cheeger constant.
\begin{dfn}
  Let $k \geq 1$, and let $\{\mathcal{G}_n\}_{n \geq 1}$ be a sequence of finite $k$-regular graphs. Then $\{\mathcal{G}_n\}_{n \geq 1}$ is an expander family if $|V(\mathcal{G}_n)| \to \infty$ as $n \to \infty$ and if there is a $\varepsilon > 0$ such that $h(\mathcal{G}_n) \geq \varepsilon$ for all $n \geq 1$. Here, $V(\mathcal{G}_n)$ denotes the vertex set of $\mathcal{G}_n$.
\end{dfn}
For more details on expander families, we refer to \cite{lubotzky}. Note that sometimes different conventions are used in the literature.

\subsection{Non-coarse-embeddability}\label{subsec=coarse_embedd}
Let us first recall the definition of coarse embedding. A family of graphs $X_i$ with induced distance $d_i$ embeds coarsely into a metric space $(Y,d)$ if there exist $1$-Lipschitz maps $f_i\colon X_i \to Y$ and an increasing map $\rho \colon [0,\infty) \to [0,\infty)$ such that $\lim_{t \to \infty} \rho(t) = \infty$ and $\rho(d_i(x,y)) \leq d(f_i(x),f_i(y))$ for all $i$ and all $x,y \in X_i$. We refer to \cite{nowakyu} for an overview of the theory.

In this article, we essentially work on the level of the Lie group $\mathrm{SL}(n,\R)$ rather than on the level of the Cayley graphs of $\mathrm{SL}(n,\Z/i\Z)$. The non-coarse-embeddability of the associated expander family follows, by an argument of Lafforgue \cite{lafforguestrengthenedpropertyt}, from a Banach space version of property (T) for $\mathrm{SL}(n,\R)$ (Theorem \ref{thm=TBanachiqueG}). Lafforgue's argument is an adaptation of the fact (due to Kazhdan and Margulis) that property (T) for $\mathrm{SL}(n,\R)$ implies that the Cayley graphs of $\mathrm{SL}(n,\Z/i\Z)$ form an expander family, and of Gromov's proof that such a family does not coarsely embed into a Hilbert space.

Let us recall Lafforgue's argument. Let $G$ be a locally compact group, let $\Gamma$ be lattice in $G$, let $(\Gamma_i)_i$ be a sequence of finite index subgroups in $\Gamma$ with index tending to $\infty$, and let $X$ be a superreflexive Banach space. Let $\pi_i$ denote the quasi-regular representation of $\Gamma$ on $\ell^2(\Gamma/\Gamma_i;X)$ and consider the direct sum $\pi_X = \oplus_i \pi_i$, which is an isometric representation of $\Gamma$ on $\ell^2(\sqcup_i \Gamma/\Gamma_i;X)$. Consider the induced representation $\mathrm{Ind}_\Gamma^G \pi_X$. Recall that the representation space of $\mathrm{Ind}_\Gamma^G \pi_X$ is the space $X'$ of Bochner-measurable functions $f \colon G\to \ell^2(\sqcup_i\Gamma/\Gamma_i;X)$ satisfying $\|f\| = \left(\int_{G/\Gamma} \|f(g)\|^2 dg\right)^{1/2}<\infty$ and $f(g\gamma) = \pi_X(\gamma^{-1}) f(g)$ for all $g \in G$ and $\gamma \in \Gamma$, and $G$ acts by 
\[ (\mathrm{Ind}_\Gamma^G \pi_X)(h) f(g) = f(h^{-1}g).\]
This construction depends on the choice of Haar measure on $G$, and we choose the one that is normalized so that $G/\Gamma$ has measure $1$.

The space $X'^G$ of invariant vectors for $\mathrm{Ind}_\Gamma^G \pi_X$ is the set of constant functions $f$ from $G$ to space of the invariant vectors for $\pi_X$, i.e., the space of $\xi \in \ell^2(\sqcup_i\Gamma/\Gamma_i;X)$ that are constant on each $\Gamma/\Gamma_i$. Also, $X'^G$ has an invariant complement subspace $Y'$, namely the space of functions $f \in X'$ with values in the space $Y:=\oplus_i \ell^2_0(\Gamma/\Gamma_i;X)$ consisting of $\xi \in \ell^2(\sqcup_i\Gamma/\Gamma_i;X)$ that have mean $0$ on each $\Gamma/\Gamma_i$. 

Let us assume that there exists a Borel measure $\nu$ on $G$ such that $\nu(1)=1$ and $\|\mathrm{Ind}_\Gamma^G \pi_X (\nu) \|_{B(Y')} \leq \frac{1}{4}$. The existence of such a $\nu$ is typically provided by some Banach space version of property (T) (see Section \ref{sec=expanders}). Let $\Omega \subset G$ be a Borel fundamental domain, i.e., for every $g \in G$, there is a unique $\gamma(g) \in \Gamma$ such that $g \in \Omega \gamma(g)$. The map from $Y$ to $Y'$ given by $f \mapsto \widetilde{f}$, where $\widetilde f(\omega \gamma) =  \pi_X(\gamma) f$ for $\omega \in \Omega$ and $\gamma \in \Gamma$, is an isometry. Also, the map from $Y'$ to $Y'$ given by $f \mapsto \int_\Omega f(\omega) d\omega$ has norm $1$. Hence, the composition of these maps with $\mathrm{Ind}_\Gamma^G \pi_X (\nu)$ has norm less than $\frac{1}{4}$ on $Y$ and is equal to $\pi_X(\mu)$ for the probability measure $\mu$ on $\Gamma$ satisfying $\mu(\gamma_0) = \int_\Omega \int_G 1_{\gamma(g^{-1} \omega) = \gamma_0} d\nu(g) d\omega$.
By replacing $\mu$ by a measure with finite support and at total variation distance less than $\frac{1}{4}$ from $\mu$, we may assume that $\|\pi_X(\mu)\| _{B(Y)}\leq \frac{1}{2}$. Hence, $\mathrm{Id} - \pi_X(\mu)$ is invertible on $Y$ and $\|(\mathrm{Id} - \pi_X(\mu))^{-1} \|\leq 2$. This means that for every $i$ and every $f_i \colon \Gamma/\Gamma_i \to X$ such that $\sum_{x \in \Gamma/\Gamma_i} f_i(x)=0$, we have
\begin{equation}
  \label{eq=poincare} \frac{1}{|\Gamma/\Gamma_i|}\sum_{x \in \Gamma/\Gamma_i} \|f_i(x)\|^2 \leq 4 \frac{1}{|\Gamma/\Gamma_i|} \sum_{x \in \Gamma/\Gamma_i} \left\lVert \int (f_i(x) -f_i(\gamma^{-1}x)) d\mu(\gamma) \right\rVert_X^2.
\end{equation}
It is classical that this inequality implies that if $S$ is a symmetric finite generating set in $\Gamma$, the family of graphs $(\Gamma/\Gamma_i,S)$ does not coarsely embed into $X$. Indeed, if the functions $f_i$ are $1$-Lipschitz, we have $\|f_i(x) - f_i(\gamma^{-1}x)\| \leq |\gamma|_S$ where $|\gamma|_S$ is the word-length of $\gamma$ with respect to the generating set $S$, so that the right-hand side is bounded by $4K^2$, where $K = \max\{ |\gamma|_S \mid \mu(\gamma)>0\}$. If moreover $f_i$ has mean zero (which can be achieved by subtracting from $f_i$ its average), by \eqref{eq=poincare} we get that $\|f_i(x)\|\leq 2\sqrt 2 K$ for at least half of the vertices in $\Gamma/\Gamma_i$. This prevents $(f_i)$ to be a coarse embedding. Indeed, since the graph has bounded degree, the typical distance between two points in $\Gamma/\Gamma_i$ is at least of order $\log(|\Gamma/\Gamma_i|)$, which tends to infinity.

For further use in the proof of Theorem \ref{thm=expanders}, let us observe that the above representation $\mathrm{Ind}_\Gamma^G \pi_X$ can be identified as the representation on $L^2(\sqcup_i(G \times_\Gamma \Gamma/\Gamma_i);X)$ coming from the measure-preserving action of $G$ on $\sqcup_i(G \times_\Gamma \Gamma/\Gamma_i)$. Here $G \times_\Gamma \Gamma/\Gamma_i$ is the quotient of $G \times \Gamma/\Gamma_i$ by the equivalence relation $(g,x) \sim (g\gamma,\gamma^{-1} x)$ for all $g \in G$, $\gamma \in \Gamma$ and $x \in \Gamma/\Gamma_i$.

\section{Harmonic analysis on the $(n-1)$-sphere} \label{sec=sphere}
Fix $n \geq 3$. In what follows, constants, functions and operators may implicitly depend on $n$. Equip the sphere $\sphere^{n-1} = \{x=(x_1,\dots,x_{n}) \in \R^{n} \mid \sum x_i^2 = 1\}$ with the Lebesgue probability measure. For $\delta \in [-1,1]$, let $T_\delta$ be the operator on $L^2(\sphere^{n-1})$ defined by $T_\delta f(x)=\textrm{the average of }f\textrm{ on }\{ y \in \sphere^{n-1} \mid \langle x,y\rangle = \delta\}$. Equivalently, considering $\mathrm{SO}(n-1)$ as the subgroup of $\mathrm{SO}(n)$ fixing the first coordinate vector $e_1$ and using the identification $\sphere^{n-1} \cong \mathrm{SO}(n-1) \backslash \mathrm{SO}(n)$ through the map $\mathrm{SO}(n-1) g \mapsto g^{-1} e_1$, we can consider $L^2(\sphere^{n-1})$ as a subspace of $L^2(\mathrm{SO}(n))$. Then $T_\delta$ is the operator on $L^2(\mathrm{SO}(n))$ equal to 
\begin{equation}\label{eq=def_Tdelta} \int_{\mathrm{SO}(n-1) \times \mathrm{SO}(n-1)} \lambda( u g u') du du' \in B(L^2(\mathrm{SO}(n)))\end{equation}
for $g \in \mathrm{SO}(n)$ satisfying $g_{11} = \delta$. Here, $\lambda$ denotes the left-regular representation.
\begin{prop}\label{prop=Schatten_class_estimates} For $|\delta|<1$, the operator $T_\delta$ belongs to $S^p(L^2(\sphere^{n-1}))$ if $p>2+\frac{2}{n-2}$. Moreover, for such $p$ there exist constants $C_p \geq 2$ and $\alpha_p \in (0,1)$ such that for all $\delta \in [-\frac{1}{2},\frac{1}{2}]$,
\[ \| T_0 - T_\delta\|_{S^p(L^2(\sphere^{n-1}))} \leq C_p |\delta|^{\alpha_p}.\]
\end{prop}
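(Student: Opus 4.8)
The plan is to diagonalize $T_\delta$ by means of the spherical harmonic decomposition and then reduce everything to classical estimates for ultraspherical (Gegenbauer) polynomials. Write $L^2(\sphere^{n-1}) = \bigoplus_{k \geq 0}\mathcal H_k$, where $\mathcal H_k$ is the space of (restrictions to $\sphere^{n-1}$ of) harmonic polynomials homogeneous of degree $k$; each $\mathcal H_k$ is an irreducible $\mathrm{SO}(n)$-subrepresentation of dimension $d_k = \binom{n+k-1}{n-1} - \binom{n+k-3}{n-1}$, so that $d_k \asymp k^{n-2}$ as $k \to \infty$. By \eqref{eq=def_Tdelta}, $T_\delta$ is left convolution on $L^2(\mathrm{SO}(n))$ by a bi-$\mathrm{SO}(n-1)$-invariant probability measure, hence commutes with the right regular representation of $\mathrm{SO}(n)$, i.e.\ with the $\mathrm{SO}(n)$-action on $L^2(\sphere^{n-1})$. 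By Schur's lemma, $T_\delta$ acts on $\mathcal H_k$ as a scalar $\varphi_k(\delta)$, which is the value at $\delta$ of the normalized zonal spherical function of degree $k$ for the Gelfand pair $(\mathrm{SO}(n),\mathrm{SO}(n-1))$; explicitly $\varphi_k = C_k^{\lambda}/C_k^{\lambda}(1)$ with $\lambda = \tfrac{n-2}{2}$, so that $\|\varphi_k\|_\infty = \varphi_k(1) = 1$ (and $\varphi_k(0) = 0$ for $k$ odd).

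It follows that
\[ \|T_\delta\|_{S^p}^p = \sum_{k \geq 0} d_k\,|\varphi_k(\delta)|^p, \qquad \|T_0 - T_\delta\|_{S^p}^p = \sum_{k \geq 0} d_k\,|\varphi_k(0) - \varphi_k(\delta)|^p .\]
The first assertion of the proposition is then immediate from the classical fact that for every $\delta_0 < 1$ there is a constant $A(\delta_0)$ with $|\varphi_k(\delta)| \leq A(\delta_0)\,k^{-\frac{n-2}{2}}$ for all $k \geq 1$ and $|\delta| \leq \delta_0$: indeed $\sum_k d_k|\varphi_k(\delta)|^p \lesssim \sum_{k \geq 1} k^{\,n-2-p\frac{n-2}{2}}$, which converges exactly when $p\tfrac{n-2}{2} - (n-2) > 1$, that is, $p > \tfrac{2(n-1)}{n-2} = 2 + \tfrac{2}{n-2}$.

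For the Hölder-type estimate I would complement this with a derivative bound of the same nature: for every $\delta_0 < 1$ there is $B(\delta_0)$ with $|\varphi_k'(\delta)| \leq B(\delta_0)\,k^{\,1-\frac{n-2}{2}}$ for all $k \geq 1$ and $|\delta| \leq \delta_0$ (differentiating an ultraspherical polynomial once in the interior of $[-1,1]$ costs one extra power of $k$). For $|\delta| \leq \tfrac12$, the mean value theorem then gives $|\varphi_k(0)-\varphi_k(\delta)| \leq B(\tfrac12)\,|\delta|\,k^{1-\frac{n-2}{2}}$, while also $|\varphi_k(0)-\varphi_k(\delta)| \leq 2A(\tfrac12)\,k^{-\frac{n-2}{2}}$; taking the weighted geometric mean of these two bounds with a weight $\theta \in [0,1]$ yields $|\varphi_k(0)-\varphi_k(\delta)| \leq C(\theta)\,|\delta|^{\theta}\,k^{-\frac{n-2}{2}+\theta}$. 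Hence
\[ \|T_0 - T_\delta\|_{S^p}^p \lesssim |\delta|^{\theta p} \sum_{k \geq 1} k^{\,n-2-p(\frac{n-2}{2}-\theta)},\]
and the series converges as soon as $\theta < \tfrac{n-2}{2} - \tfrac{n-1}{p}$, a quantity that is strictly positive precisely because $p > 2 + \tfrac{2}{n-2}$. Fixing any $\theta = \alpha_p$ with $0 < \alpha_p < \min\{1,\ \tfrac{n-2}{2} - \tfrac{n-1}{p}\}$ and enlarging the resulting constant so that it is at least $2$ gives the desired bound on $[-\tfrac12,\tfrac12]$.

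The main point, and the only nonroutine ingredient, is the uniform-in-$k$ control of the spherical functions and their first derivatives on the compact subinterval $[-\tfrac12,\tfrac12]$ of $(-1,1)$, with the sharp exponents $k^{-\frac{n-2}{2}}$ and $k^{1-\frac{n-2}{2}}$; it is the exponent $\tfrac{n-2}{2}$ that makes the integrability threshold come out to be exactly $2+\tfrac{2}{n-2}$, matching the first part of the statement. These estimates can be extracted from the Darboux/Hilb-type asymptotics for ultraspherical polynomials, or proved directly from an integral representation or the three-term recursion, but one must be careful to keep the implied constants uniform over $[-\tfrac12,\tfrac12]$ and over $k$. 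The remaining steps — the Schur-lemma diagonalization, the interpolation between the two pointwise bounds, and the summation of the resulting power series — are routine.
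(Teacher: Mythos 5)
Your argument is correct and follows essentially the same route as the paper: decompose $L^2(\sphere^{n-1})=\oplus_k\mathcal H_k$, diagonalize $T_\delta$ via Schur's lemma to the scalars $\varphi_k(\delta)$ given by normalized Gegenbauer polynomials, invoke the uniform decay $|\varphi_k|\lesssim k^{-(n-2)/2}$ together with the matching derivative bound, and sum; your weighted geometric mean $|\varphi_k(0)-\varphi_k(\delta)|\lesssim|\delta|^{\theta}k^{-(n-2)/2+\theta}$ is equivalent to the paper's $k^{-(n-2)/2}\min\{1,k|\delta|\}$ and yields the same threshold $\theta<\tfrac{n-2}{2}-\tfrac{n-1}{p}$. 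The one place where you defer to the literature, the paper handles directly: it proves the two pointwise bounds (its Lemma \ref{lemma=inequ_jacobi_pol}) from scratch via the Laplace integral representation $\varphi_k(x)=c_n\int_0^\pi(x+i\sqrt{1-x^2}\cos\varphi)^k\sin^{n-3}\varphi\,d\varphi$ and an elementary Laplace-method estimate, which sharpens the bound to $C/(k(1-x^2))^{(n-2)/2}$ uniformly in $x\in(-1,1)$ (not only on a fixed compact subinterval) and avoids appealing to Darboux/Hilb asymptotics; that self-contained computation is in fact the bulk of the paper's proof, so while your outline is sound, the step you flag as the "only nonroutine ingredient" is indeed where the real work lives.
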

The case $n=3$ was proved in \cite[Lemma 5.3]{lafforguedelasalle}. For the proof of Proposition \ref{prop=Schatten_class_estimates}, we use some facts from the representation theory of $\mathrm{SO}(n)$ (see for example \cite[Section 7.2--7.4]{vandijk}) that we collect in the following lemma.
\begin{lemma} \label{lemma=vandijk} There is an orthogonal decomposition $L^2(\sphere^{n-1})= \oplus_{k=0}^\infty \mathcal H_k$ such that each $\mathcal H_k$ has finite dimension
\[ m_k = \frac{(n+k-3)!(n+2k-2)}{(n-2)! k!}.\]
Moreover, the operators $T_\delta$ are diagonal with respect to this decomposition, and 
$T_\delta\left|_{\mathcal H_k}\right. = \varphi_k(\delta) \mathrm{Id}_{\mathcal H_k}$, where $\varphi_k$ is given by the formula
\[ \varphi_k(x) = c_n \int_0^\pi (x+i\sqrt{1-x^2} \cos \varphi)^k \sin^{n-3} \varphi d\varphi\]
for $x \in [-1,1]$. Here, $c_n = \frac{\Gamma(\frac{n-1}{2})}{\sqrt \pi \Gamma(\frac{n-2}{2})}$, so that $\varphi_k(1)=1$.
\end{lemma}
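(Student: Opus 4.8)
The plan is to deduce everything from the classical theory of spherical harmonics together with one explicit integral computation. Recall that $\mathcal{H}_k$ is the space of restrictions to $\sphere^{n-1}$ of the homogeneous harmonic polynomials of degree $k$ on $\R^n$; equivalently, it is the eigenspace of the Laplace--Beltrami operator on $\sphere^{n-1}$ with eigenvalue $-k(k+n-2)$. The orthogonality of $L^2(\sphere^{n-1})=\oplus_k\mathcal H_k$ is then automatic (eigenspaces of a self-adjoint operator for distinct eigenvalues), its completeness follows from density of polynomials (Stone--Weierstrass), and the statement that each $\mathcal H_k$ is an $\mathrm{SO}(n)$-invariant irreducible subspace, with the $\mathcal H_k$ pairwise inequivalent, is exactly what is recorded in \cite[Sections 7.2--7.4]{vandijk}.

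For the dimension I would use that the Laplacian $\Delta$ maps the space $\mathcal P_k$ of homogeneous degree-$k$ polynomials on $\R^n$ \emph{onto} $\mathcal P_{k-2}$, with kernel precisely the harmonic polynomials (Fischer decomposition $\mathcal P_k = \mathcal H_k \oplus |x|^2\mathcal P_{k-2}$). Hence $m_k = \dim\mathcal P_k - \dim\mathcal P_{k-2} = \binom{n+k-1}{k} - \binom{n+k-3}{k-2}$ (with the convention that the second term vanishes for $k<2$, which also covers $k=0,1$). Factoring out $(n+k-3)!/((n-1)!\,k!)$ and using the identity $(n+k-1)(n+k-2) - k(k-1) = (n-1)(n+2k-2)$ gives the claimed closed form for $m_k$.

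That $T_\delta$ is diagonal with a scalar $\varphi_k(\delta)$ on each $\mathcal H_k$ is a Schur-lemma argument: a change of variables in the defining averaging formula shows $T_\delta\,\lambda(g) = \lambda(g)\,T_\delta$ for all $g\in\mathrm{SO}(n)$, so $T_\delta$ preserves each isotypic component; since the $\mathcal H_k$ are pairwise inequivalent irreducibles, $T_\delta|_{\mathcal H_k} = \varphi_k(\delta)\,\mathrm{Id}_{\mathcal H_k}$ for some scalar. To identify $\varphi_k$, I would evaluate $T_\delta$ on the explicit element $h_k\in\mathcal H_k$, $h_k(x)=(x_1+ix_2)^k$ (harmonic because $(\partial_1^2+\partial_2^2)h_k=0$), at the point $e_1$, where $h_k(e_1)=1$. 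By definition $(T_\delta h_k)(e_1)$ is the average of $h_k$ over the slice $\{y\in\sphere^{n-1}:y_1=\delta\}$; writing such a $y$ as $y=(\delta,\sqrt{1-\delta^2}\,\omega)$ with $\omega$ uniform on $\sphere^{n-2}\subset\R^{n-1}$, one gets $h_k(y)=(\delta+i\sqrt{1-\delta^2}\,\omega_1)^k$. The first coordinate $\omega_1$ of a uniform point on $\sphere^{n-2}$ has density proportional to $(1-t^2)^{(n-4)/2}$ on $[-1,1]$, and the substitution $\omega_1=\cos\varphi$ turns this into the measure proportional to $\sin^{n-3}\varphi\,d\varphi$ on $[0,\pi]$. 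Since $(T_\delta h_k)(e_1)=\varphi_k(\delta)h_k(e_1)=\varphi_k(\delta)$, this produces exactly
\[ \varphi_k(\delta) = c_n \int_0^\pi \bigl(\delta+i\sqrt{1-\delta^2}\cos\varphi\bigr)^k \sin^{n-3}\varphi\,d\varphi, \]
with $c_n = \bigl(\int_0^\pi\sin^{n-3}\varphi\,d\varphi\bigr)^{-1} = \Gamma(\tfrac{n-1}{2})/(\sqrt\pi\,\Gamma(\tfrac{n-2}{2}))$ by the standard Beta-integral evaluation; putting $\delta=1$ gives $\varphi_k(1)=1$.

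There is no genuine obstacle here: the content is classical. The only points requiring care are the bookkeeping of normalizations (the probability measure on $\sphere^{n-1}$, the uniform measure on the slice, and the constant $c_n$) and the observation that the evaluation functional $f\mapsto f(e_1)$ is nonzero on $\mathcal H_k$, which is why $h_k=(x_1+ix_2)^k$ is a convenient test vector. For $n=3$ one has $\sin^{n-3}\varphi=1$ and recovers the formula used in \cite[Lemma 5.3]{lafforguedelasalle}.
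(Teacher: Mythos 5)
Your proposal is correct and follows essentially the same route the paper indicates: the decomposition into spherical harmonics $\mathcal H_k$ as pairwise inequivalent $\mathrm{SO}(n)$-irreducibles (cited from van Dijk), Schur's Lemma to diagonalize $T_\delta$, and identification of the eigenvalue by evaluating on the test vector $(x_1+ix_2)^k$. The paper leaves most of this to a remark and a reference; you have simply filled in the standard details (Fischer decomposition for $m_k$, the slice integral and Beta-function normalization for $\varphi_k$), all of which check out.
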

\begin{rem} 
This lemma expresses the fact that $(\mathrm{SO}(n),\mathrm{SO}(n-1))$ is a Gelfand pair with spherical functions $g\mapsto \varphi_k(g_{11})$. For this Gelfand pair, these functions are Gegenbauer (also called ultraspherical) polynomials. The spaces $\mathcal H_k$ are distinct irreducible representations of $\mathrm{SO}(n)$, and the operators $T_\delta$ commute with the representation of $\mathrm{SO}(n)$, so that Schur's Lemma implies that they are diagonal in the decomposition $\oplus_k \mathcal H_k$. The value $\varphi_k(\delta)$ can be computed by considering the harmonic polynomial $(x_1+i x_2)^k \in \mathcal H_k$.
\end{rem}
\begin{lemma}\label{lemma=inequ_jacobi_pol} There exists a constant $C$ such that for all $k \geq 1$ and $x \in (-1,1)$,
\[ |\varphi_k(x)| \leq \frac{C}{(k(1-x^2))^{\frac{n-2}{2}}} \quad\textrm{ and }\quad |\varphi^\prime_k(x)| \leq \frac{C}{(k(1-x^2))^{\frac{n-2}{2}}} \frac{k}{\sqrt{1-x^2}}. \]
\end{lemma}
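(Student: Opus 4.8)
The plan is to estimate directly from the integral representation of $\varphi_k$ in Lemma \ref{lemma=vandijk} via Laplace's method, reducing everything to a single Gaussian moment integral. The starting point is the elementary identity
\[ \bigl|x + i\sqrt{1-x^2}\cos\varphi\bigr|^2 = x^2 + (1-x^2)\cos^2\varphi = 1 - (1-x^2)\sin^2\varphi. \]
Writing $s = 1-x^2 \in (0,1]$, taking absolute values inside the integral and using symmetry of the integrand about $\varphi = \pi/2$, one gets
\[ |\varphi_k(x)| \;\leq\; c_n\int_0^\pi \bigl(1-s\sin^2\varphi\bigr)^{k/2}\sin^{n-3}\varphi\,d\varphi \;=\; 2c_n\int_0^{\pi/2}\bigl(1-s\sin^2\varphi\bigr)^{k/2}\sin^{n-3}\varphi\,d\varphi. \]
On $[0,\pi/2]$ I would insert the elementary bounds $\tfrac{2}{\pi}\varphi \leq \sin\varphi \leq \varphi$ and $(1-t)^{k/2}\leq e^{-kt/2}$ (valid for $t=s\sin^2\varphi\in[0,1]$), which produce the Gaussian weight $e^{-2ks\varphi^2/\pi^2}$ and the polynomial factor $\varphi^{n-3}$ (legitimate since $n\geq3$). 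Extending the integral to $[0,\infty)$ and substituting $u=\varphi\sqrt{ks}$ then gives
\[ |\varphi_k(x)| \;\leq\; 2c_n\,(ks)^{-\frac{n-2}{2}}\int_0^\infty e^{-2u^2/\pi^2}\,u^{n-3}\,du, \]
and the remaining integral is a finite constant (it converges precisely because $n-3>-1$). This is the first asserted bound, with $s=1-x^2$.

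For the derivative estimate I would differentiate under the integral sign, which is justified since the integrand is smooth in $x$ on $(-1,1)$, obtaining
\[ \varphi_k'(x) = c_n k\int_0^\pi \bigl(x+i\sqrt{1-x^2}\cos\varphi\bigr)^{k-1}\Bigl(1-\tfrac{ix\cos\varphi}{\sqrt{1-x^2}}\Bigr)\sin^{n-3}\varphi\,d\varphi. \]
Here $\bigl|1-\tfrac{ix\cos\varphi}{\sqrt{1-x^2}}\bigr|^2 = \tfrac{1-x^2\sin^2\varphi}{1-x^2}\leq \tfrac{1}{1-x^2}$, so the same identity and the same Laplace estimate, now with exponent $k-1$ in place of $k$, yield for $k\geq 2$
\[ |\varphi_k'(x)| \;\leq\; \frac{c_n k}{\sqrt{1-x^2}}\int_0^\pi\bigl(1-s\sin^2\varphi\bigr)^{(k-1)/2}\sin^{n-3}\varphi\,d\varphi \;\leq\; \frac{c_n k}{\sqrt{1-x^2}}\cdot\frac{C'}{\bigl((k-1)s\bigr)^{\frac{n-2}{2}}}, \]
and using $k-1\geq k/2$ for $k\geq2$ converts this into the stated bound. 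The one remaining value $k=1$ is handled by hand: the imaginary part of the defining integral vanishes by antisymmetry about $\pi/2$, so $\varphi_1(x)=x$ and $\varphi_1'(x)=1$, and both inequalities then hold for any $C\geq1$ since their right-hand sides are at least $C$ when $1-x^2\leq1$.

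I do not expect a genuine obstacle: this is a careful but routine Laplace-type estimate. The two points that require attention are (i) keeping the dependence on $1-x^2$ uniform in $k$, which is automatic once the $\varphi$-integral is enlarged to $[0,\infty)$ before the substitution $u=\varphi\sqrt{ks}$; and (ii) that the exponent $n-3$ is $\geq0$, so that $\sin^{n-3}\varphi\leq\varphi^{n-3}$ and the moment integral $\int_0^\infty e^{-2u^2/\pi^2}u^{n-3}\,du$ converges; both are exactly the hypothesis $n\geq3$. As a sanity check, the trivial bound $|\varphi_k(x)|\leq\varphi_k(1)=1$ (from $|x+i\sqrt{1-x^2}\cos\varphi|\leq1$) already covers the regime $k(1-x^2)\leq1$, but the Laplace estimate handles all $x\in(-1,1)$ and $k\geq1$ at once.
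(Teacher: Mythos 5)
Your proof is correct and takes essentially the same route as the paper's: both rest on the identity $|x+i\sqrt{1-x^2}\cos\varphi|^2 = 1-(1-x^2)\sin^2\varphi$, the bound $1-t\leq e^{-t}$, and a change of variables reducing the $\varphi$-integral to a Gaussian moment. The only variation is cosmetic: the paper splits the integral at $\pi/4$ and substitutes $t=\sqrt{k(1-x^2)}\sin\varphi$ on the inner piece, whereas you invoke Jordan's inequality $\sin\varphi\geq\tfrac{2}{\pi}\varphi$ to get the Gaussian decay on all of $[0,\pi/2]$ at once before substituting.
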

\begin{proof}
Since $\varphi_1(x) = x$, we can assume $k \geq 2$. We claim that there is a constant $C$ (depending on $n$) such that for $k \geq 1$,
\begin{equation}\label{eq=key_inequality_jacobi_pol} \int_0^\pi |x+i\sqrt{1-x^2} \cos \varphi|^k \sin^{n-3}\varphi d \varphi \leq \frac{C}{(k(1-x^2))^{\frac{n-2}{2}}}.\end{equation}
This implies the two inequalities of the lemma (for a different value of $C$). The first inequality is immediate (and holds already for $k \geq 1$), and for the second one, use
\begin{multline*} |\varphi^\prime_k(x)| = c_n \left|\int_0^\pi k\left(1-i \frac{x \cos \varphi}{\sqrt{1-x^2}}\right) (x+i\sqrt{1-x^2} \cos \varphi)^{k-1} \sin^{n-3} \varphi d\varphi\right| \\ \leq c_n \frac{k}{\sqrt{1-x^2}} \int_0^\pi |x+i\sqrt{1-x^2} \cos \varphi|^{k-1} \sin^{n-3}\varphi d \varphi.\end{multline*}Let us prove \eqref{eq=key_inequality_jacobi_pol}. Firstly, note that
\[ |x+i\sqrt{1-x^2} \cos \varphi|^2 = x^2 + (1-x^2) \cos^2\varphi = 1 - (1-x^2) \sin^2\varphi \leq e^{-(1-x^2) \sin^2\varphi}.\]
This implies
\[ \int_0^\pi |x+i\sqrt{1-x^2} \cos \varphi|^k \sin^{n-3}\varphi d \varphi \leq 2 \int_0^{\frac{\pi}{2}} e^{-\frac{k}{2}(1-x^2) \sin^2\varphi} \sin^{n-3}\varphi d\varphi.\]
Cut the integral into two pieces as $\int_0^{\frac{\pi}{2}} = \int_0^{\frac{\pi}{4}} + \int_{\frac{\pi}{4}}^{\frac{\pi}{2}}$. For $\varphi \in [\frac{\pi}{4},\frac{\pi}{2}]$, estimate $e^{-\frac{k}{2} (1-x^2) \sin^2\varphi} \sin^{n-3}\varphi$ by $e^{-\frac{k}{4}(1-x^2)}$ to dominate the second integral by $\frac{\pi}{4} e^{-\frac{k}{4}(1-x^2)}$. For the first integral, substitute $t= \sqrt{k(1-x^2)} \sin \varphi$ and use $d(\sin \varphi) = \cos \varphi d\varphi \geq \frac{1}{\sqrt 2} d\varphi$ to dominate the first integral by
\[ \frac{\sqrt{2}}{(k(1-x^2))^{\frac{n-2}{2}}}\int_0^{\sqrt{\frac{k}{2}(1-x^2)}} e^{-\frac{t^2}{2}} t^{n-3} dt.\]
These two inequalities together become
\begin{align*} \int_0^\pi |x+i\sqrt{1-x^2} &\cos \varphi|^k \sin^{n-3}\varphi d \varphi \\ &\leq \frac{2\sqrt{2}}{(k(1-x^2))^{\frac{n-2}{2}}} \int_0^{\infty} e^{-\frac{t^2}{2}} t^{n-3} dt + \frac{\pi}{2} e^{-\frac{k}{4}(1-x^2)},\end{align*}
which implies \eqref{eq=key_inequality_jacobi_pol}.
\end{proof}
\begin{proof}[Proof of Proposition \ref{prop=Schatten_class_estimates}]
By Lemma \ref{lemma=vandijk}, we have
\[ \| T_x\|_{S^p}^p = \sum_{k \geq 0} m_k |\varphi_k(x)|^p \quad\textrm{ and }\quad \|T_{0} - T_x\|_{S^p}^p = \sum_{k\geq 1} m_k |\varphi_k(0) - \varphi_k(x)|^p.\]
By the formula for $m_k$, there exists an $A > 0$ (depending on $n$) such that $m_k \leq A k^{n-2}$ for $k \geq 1$. Hence, by Lemma \ref{lemma=inequ_jacobi_pol}, we have $m_k |\varphi_k(x)|^p \leq C(x,n,p) k^{n-2 -p\frac{n-2}{2}}$ for some constant $C(x,n,p)$ depending on $x$, $n$ and $p$. We conclude that $T_x \in S^p$ if $p > 2+\frac{2}{n-2}$ and $x \in (-1,1)$, because the series $\sum_{k \geq 1} k^{n-2 -p\frac{n-2}{2}}$ converges if $n-2 -p\frac{n-2}{2}<-1$, i.e., $p> 2+\frac{2}{n-2}$. For the second estimate, assume that $x \in [-\frac{1}{2},\frac{1}{2}]$. Using Lemma \ref{lemma=inequ_jacobi_pol}, we dominate $|\varphi_k(0) - \varphi_k(x)|$ by $ |x|\sup_{y \in [0,x]} |\varphi_k'(y)|$ for small values of $k$, and by $|\varphi_k(0)|+|\varphi_k(x)|$ for large values of $k$. More precisely, we obtain a constant $C > 0$ (depending on $n$) such that for $k \geq 1$ and $x \in [-\frac{1}{2},\frac{1}{2}]$,
\[ |\varphi_k(x) - \varphi_k(0)|\leq  \frac{C}{k^{\frac{n-2}{2}}} \min\{1,k|x|\}. \]
The proposition now follows from a simple computation.
\end{proof}

\subsection{Consequences in terms of $S^p$-multipliers}\label{subsec_multipliers}
In Section \ref{sec=slr}, Proposition \ref{prop=Schatten_class_estimates} will be used in the following form.
\begin{lemma}\label{lemma=mutlipliers_SOn} Let $\varphi\colon\mathrm{SO}(n,\R) \to \C$ be a continuous $\mathrm{SO}(n-1)$-bi-invariant function that is a multiplier of $S^p(L^2(\mathrm{SO}(n)))$. If $g,g' \in \mathrm{SO}(n)$, then
\[ |\varphi(g) - \varphi(g')| \leq 2 C_p \max(|g_{11}|^{\alpha_p}, |g'_{11}|^{\alpha_p}) \|\varphi\|_{M(S^p)}.\]
\end{lemma}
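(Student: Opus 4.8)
My plan is to realize $\varphi(g)-\varphi(g')$ as the value of a fixed norm-one functional applied to $M_\varphi$ of an explicit element of $S^p(L^2(\mathrm{SO}(n)))$ whose $S^p$-norm is controlled by Proposition \ref{prop=Schatten_class_estimates}; the multiplier bound then closes the estimate. Write $G=\mathrm{SO}(n)$, $K=\mathrm{SO}(n-1)$, let $\rho$ be the right regular representation of $G$ on $L^2(G)$, let $q_0=\int_K\rho(u)\,du$ be the orthogonal projection onto the right-$K$-invariant functions, and for $\delta\in(-1,1)$ put
\[ S_\delta:=\int_{K\times K}\rho(u\,g_\delta\,u')\,du\,du'=q_0\,\rho(g_\delta)\,q_0\in B(L^2(G)),\]
where $g_\delta\in G$ is any element with $(g_\delta)_{11}=\delta$ (so $S_\delta$ depends only on the double coset $Kg_\delta K$, hence only on $\delta$). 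By the representation theory of the Gelfand pair $(G,K)$ underlying Lemma \ref{lemma=vandijk}, $S_\delta$ has the same sequence of singular values $(\varphi_k(\delta))_{k\ge0}$, with multiplicities $(m_k)_{k\ge0}$, as $T_\delta$ (up to additional zeros); hence, by Proposition \ref{prop=Schatten_class_estimates} and for $p$ as there, $S_\delta\in S^p(L^2(G))$ and
\[ \|S_\delta-S_{\delta'}\|_{S^p}\le\|S_\delta-S_0\|_{S^p}+\|S_0-S_{\delta'}\|_{S^p}=\|T_\delta-T_0\|_{S^p}+\|T_0-T_{\delta'}\|_{S^p}\le C_p\big(|\delta|^{\alpha_p}+|\delta'|^{\alpha_p}\big)\]
whenever $|\delta|,|\delta'|\le\frac12$.

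The argument then rests on two facts. First, each $S_\delta$ is an eigenvector of $M_\varphi$ with eigenvalue $\varphi(g_\delta)$: the Schur multiplier $M_\varphi$ acts on a Hilbert--Schmidt operator by multiplying its integral kernel by the symbol $(g,h)\mapsto\varphi(g^{-1}h)$; the kernel of $S_\delta=q_0\rho(g_\delta)q_0$ is supported on $\{(g,h):g^{-1}h\in Kg_\delta K\}$, where the symbol equals the constant $\varphi(g_\delta)$ by $K$-bi-invariance of $\varphi$, so (after the approximation discussed below, $S_\delta$ itself not being Hilbert--Schmidt)
\[ M_\varphi(S_\delta)=\varphi(g_\delta)\,S_\delta.\]
Second, the constant function $e_0\in L^2(G)$ with $\|e_0\|_2=1$ satisfies $\rho(w)e_0=e_0$ for every $w\in G$, hence $S_\delta e_0=e_0$ and therefore $\scal{S_\delta e_0}{e_0}=1$ \emph{independently of} $\delta$.

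Now fix $g,g'\in G$ and set $\delta=g_{11}$, $\delta'=g'_{11}$, so that $\varphi(g)=\varphi(g_\delta)$ and $\varphi(g')=\varphi(g_{\delta'})$ (again by $K$-bi-invariance). Combining the two facts,
\[ \varphi(g)-\varphi(g')=\scal{\big(\varphi(g_\delta)S_\delta-\varphi(g_{\delta'})S_{\delta'}\big)e_0}{e_0}=\scal{M_\varphi(S_\delta-S_{\delta'})\,e_0}{e_0}.\]
If $|\delta|,|\delta'|\le\frac12$, then, since $\|e_0\|_2=1$,
\[ |\varphi(g)-\varphi(g')|\le\|M_\varphi(S_\delta-S_{\delta'})\|_{S^p}\le\|\varphi\|_{M(S^p)}\,\|S_\delta-S_{\delta'}\|_{S^p}\le C_p\big(|\delta|^{\alpha_p}+|\delta'|^{\alpha_p}\big)\|\varphi\|_{M(S^p)},\]
which is at most $2C_p\max(|g_{11}|^{\alpha_p},|g'_{11}|^{\alpha_p})\|\varphi\|_{M(S^p)}$, as required. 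In the remaining case $\max(|g_{11}|,|g'_{11}|)>\frac12$ the inequality is immediate: $|\varphi(g)-\varphi(g')|\le2\|\varphi\|_\infty\le2\|\varphi\|_{M(S^p)}$, while $\max(|g_{11}|^{\alpha_p},|g'_{11}|^{\alpha_p})\ge\max(|g_{11}|,|g'_{11}|)>\frac12$ (as $\alpha_p<1$ and $|g_{11}|,|g'_{11}|\le1$), so with $C_p\ge2$ the right-hand side exceeds $C_p\|\varphi\|_{M(S^p)}\ge2\|\varphi\|_{M(S^p)}$.

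The step requiring the most care is the eigenvector identity $M_\varphi(S_\delta)=\varphi(g_\delta)S_\delta$, since $M_\varphi$ is a priori only bounded on $S^p$ whereas $S_\delta\notin S^2$ in general. I would justify it by approximation: replace $\mathbf 1_K$ (normalized Haar measure of $K$ regarded as a measure on $G$) by the normalized indicator $\mathbf 1_{K,\varepsilon}\in L^2(G)$ of an $\varepsilon$-neighborhood of $K$, so that $\rho(\mathbf 1_{K,\varepsilon})\rho(g_\delta)\rho(\mathbf 1_{K,\varepsilon})=\rho(\mathbf 1_{K,\varepsilon}*\delta_{g_\delta}*\mathbf 1_{K,\varepsilon})$ is Hilbert--Schmidt with integral kernel supported in a shrinking neighborhood of $Kg_\delta K$, on which $\varphi$ is nearly constant by continuity; evaluating $\scal{M_\varphi(\cdot)e_0}{e_0}$ on these approximants (where it equals $\int_G\varphi\,d(\mathbf 1_{K,\varepsilon}*\delta_{g_\delta}*\mathbf 1_{K,\varepsilon})$) and letting $\varepsilon\to0$, while using $S^p$-convergence of the finite-rank spectral truncations of $S_\delta$ to control $M_\varphi$, yields $\scal{M_\varphi(S_\delta)e_0}{e_0}=\varphi(g_\delta)$, which is exactly what is used above. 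Everything else is a direct combination of Proposition \ref{prop=Schatten_class_estimates} with the reproducing identity $\scal{S_\delta e_0}{e_0}=1$.
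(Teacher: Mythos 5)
Your proof is essentially the paper's own first proof of this lemma: form the bi-$\mathrm{SO}(n-1)$-averaged operators $S_\delta$ (the paper's $T_\delta$, modulo the choice of left vs.\ right regular representation), show $M_\varphi(S_\delta)=\varphi(g_\delta)S_\delta$ by a kernel and approximation argument, then extract $\varphi(g)-\varphi(g')$ against the constant unit vector and invoke Proposition~\ref{prop=Schatten_class_estimates}, with the trivial bound $2\|\varphi\|_{M(S^p)}$ handling the case $\max(|g_{11}|,|g'_{11}|)\geq\tfrac{1}{2}$. The only cosmetic differences are your use of the right regular representation in place of $\lambda$ and the rank-one test $\langle\,\cdot\,e_0,e_0\rangle$ in place of the paper's bound $|\varphi(g)-\varphi(g')|\leq\|\varphi(g)T_\delta-\varphi(g')T_{\delta'}\|_{S^p}$ via the common eigenvector.
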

\begin{proof}
Let $g,g^\prime \in \mathrm{SO}(n)$, and let $\delta = {g_{11}}$ and $\delta' = {g^\prime_{11}}$. If $\max(|\delta|,|\delta^\prime|) \geq \frac{1}{2}$, then $|\varphi(g) - \varphi(g')| \leq 2 \|\varphi\|_{L^\infty} \leq 2  \|\varphi\|_{M(S^p)}$, and the claim follows, since $2^{-\alpha_p}C_p \geq 1$. Therefore, the result follows from the following inequality: if $|\delta|,|\delta'| < 1$,
\[|\varphi(g) - \varphi(g')| \leq \|\varphi\|_{M(S^p)} \|T_{\delta} - T_{\delta'}\|_{S^p}.\]
We give two proofs of this inequality. Firstly, we consider again the operators $T_\delta$ on $\mathcal H= L^2(\mathrm{SO}(n))$ given by \eqref{eq=def_Tdelta}. Then for every $g \in \mathrm{SO}(n)$, we have $M_\varphi(T_{g_{11}}) = \varphi(g) T_{g_{11}}$. This equality is a particular case of a more general fact: for every $\mathrm{SO}(n-1)$-bi-invariant probability measure $\mu$ on $\mathrm{SO}(n)$ with support contained in $\{g \mid |g_{1,1}|<1\}$, we have $\lambda(\mu) \in S^p(L^2(\mathrm{SO}(n)))$ and $M_\varphi(\lambda(\mu)) = \lambda(\varphi \mu)$. If $\mu$ is absolutely continuous with respect to the Haar measure with a density in $L^2(\mathrm{SO}(n))$, then $\lambda(\mu) \in S^2(L^2(\mathrm{SO}(n)))$ and the equality $M_\varphi(\lambda(\mu)) = \lambda(\varphi \mu)$ is the very definition of $M_\varphi$ (this does not use that $\mu$ is $\mathrm{SO}(n-1)$-bi-invariant). The general case follows by a density argument, using that by Proposition \ref{prop=Schatten_class_estimates}, for any $\varepsilon>0$, the map $\mu \mapsto \lambda(\mu)$ is continuous from the set of $\mathrm{SO}(n-1)$-bi-invariant probability measures with support in $\{g \mid |g_{1,1}|\leq 1-\varepsilon\}$ equipped with the weak-* topology to $S^p(L^2(\mathrm{SO}(n)))$ equipped with the norm topology. The fact that the $T_\delta$'s have a common eigenvector with eigenvalue $1$, namely a constant function on $L^2(\mathrm{SO}(n))$, implies that
\[ |\varphi(g) - \varphi(g')| \leq \|\varphi(g)T_{\delta} - \varphi(g')T_{\delta'}\|_{S^p} \leq \|\varphi\|_{M(S^p)} \|T_{\delta} - T_{\delta'}\|_{S^p},\]
which proves the claim.

A dual proof proceeds along the lines of \cite[Section 3]{delaat1}. Let $q=\frac{p}{p-1}$ be the conjugate exponent of $p$. By \cite[Proposition 2.7]{delaat1}, we can write $\varphi(g) = \sum_k c_k m_k \varphi_k(g_{11})$, where the $c_k$'s play the role of Fourier coefficients and satisfy $(\sum_k m_k |c_k|^{q})^{\frac{1}{q}} \leq \|\varphi\|_{M(S^q)} = \|\varphi\|_{M(S^p)}$. Hence, by the H\"older inequality, 
\[ |\varphi(g) - \varphi(g') | \leq \|\varphi\|_{M(S^p)} \left(\sum_k m_k |\varphi_k(\delta) - \varphi_k(\delta^\prime)|^p\right)^{\frac{1}{p}} = \|\varphi\|_{M(S^p)} \|T_{\delta} - T_{\delta^\prime}\|_{S^p}.\]
\end{proof}

\section{Approximation properties for $L^p(L(\mathrm{SL}(r,\Z)))$.} \label{sec=slr}
In this section, we will prove Theorem \ref{thm=mainthmlp}. In fact, we will obtain a more general statement (Theorem \ref{thm=mainthmlp_generalgroups}), namely that certain noncommutative $L^p$-spaces associated with arbitrary lattices in connected high rank Lie groups do not have the CBAP. These results will follow from the following theorem.
\begin{thm} \label{thm=nonAP_high_rank} Let $n \geq 3$, let $r \geq 2n-3$ and $p \in \left[1,2-\frac 2 n\right) \cup \left(2+\frac 2 {n-2},\infty\right]$. Then there does not exist a sequence of functions $\varphi_n \in C_0(\mathrm{SL}(r,\R))$ such that $\sup_n \|\varphi_n\|_{M(S^p)} < \infty$ and
\[
  \lim_n \varphi_n(g) = 1\textrm{ for all }g \in \mathrm{SL}(r,\R).
\]
\end{thm}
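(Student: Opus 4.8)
The plan is to argue by contradiction, reducing the statement to repeated applications of Lemma~\ref{lemma=mutlipliers_SOn} to copies of the Gelfand pair $(\mathrm{SO}(n),\mathrm{SO}(n-1))$ placed inside $\mathrm{SL}(r,\R)$. First I would dispose of the range $p\in\left[1,2-\frac2n\right)$ by duality: since $S^{p'}(\mathcal H)$ is isometrically the dual of $S^p(\mathcal H)$ for conjugate exponents $p,p'$, every function $\varphi$ satisfies $\|\varphi\|_{M(S^p)}=\|\varphi\|_{M(S^{p'})}$, and an elementary computation shows that $p\in\left[1,2-\frac2n\right)$ precisely when $p'=\frac p{p-1}\in\left(2+\frac2{n-2},\infty\right]$. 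Thus a hypothetical sequence $(\varphi_m)_m$ for some $p$ in the first range is also one for $p'$ in the second, so it suffices to treat $p>2+\frac2{n-2}$ — exactly the range in which Proposition~\ref{prop=Schatten_class_estimates} and Lemma~\ref{lemma=mutlipliers_SOn} apply.

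So suppose $(\varphi_m)_m\subset C_0(\mathrm{SL}(r,\R))$ satisfies $M:=\sup_m\|\varphi_m\|_{M(S^p)}<\infty$ and $\varphi_m\to1$ pointwise, with $p>2+\frac2{n-2}$. Fix the copy of $\mathrm{SO}(n)\subset\mathrm{SL}(r,\R)$ acting on the span of the first $n$ basis vectors, and let $\mathrm{SO}(n-1)=\mathrm{Stab}_{\mathrm{SO}(n)}(e_1)$. Replacing each $\varphi_m$ by its average $g\mapsto\int_{\mathrm{SO}(n-1)\times\mathrm{SO}(n-1)}\varphi_m(ugu')\,du\,du'$ does not increase $\|\cdot\|_{M(S^p)}$ (the multiplier norm is invariant under left and right translations by group elements), keeps the $\varphi_m$ in $C_0$ (translating a set by a compact set preserves escaping to infinity), and preserves pointwise convergence to $1$ (dominated convergence, using $\sup_m\|\varphi_m\|_\infty\le M$). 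Hence I may assume every $\varphi_m$ is $\mathrm{SO}(n-1)$-bi-invariant.

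The mechanism is then as follows. If $b\in\mathrm{SL}(r,\R)$ centralises $\mathrm{SO}(n-1)$, the pair $(b\,\mathrm{SO}(n)\,b^{-1},\mathrm{SO}(n-1))$ is again the Gelfand pair $(\mathrm{SO}(n),\mathrm{SO}(n-1))$, and $\varphi_m$, restricted to $b\,\mathrm{SO}(n)\,b^{-1}$ and pulled back along $k\mapsto bkb^{-1}$, becomes an $\mathrm{SO}(n-1)$-bi-invariant $S^p$-multiplier on $\mathrm{SO}(n)$ of norm at most $M$ (restriction to a closed subgroup, as in \cite[Proposition~2.3]{lafforguedelasalle}, and a group isomorphism do not increase multiplier norms). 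By Lemma~\ref{lemma=mutlipliers_SOn}, such a function is constant on the equatorial locus $\{k:k_{11}=0\}$ and stays within $2C_pM|k_{11}|^{\alpha_p}$ of that constant elsewhere; translated back, $\varphi_m$ is exactly constant on the ``equator'' $E_b=\{bkb^{-1}:k\in\mathrm{SO}(n),\,k_{11}=0\}$ of $b\,\mathrm{SO}(n)\,b^{-1}$ and approximately constant on a band around it. The heart of the proof — and the step I expect to be the main obstacle — is to use the hypothesis $r\ge 2n-3$ to organise a family of such conjugates of $\mathrm{SO}(n)$ into a ``chain'': the conjugating elements should keep the relevant $\mathrm{SO}(n-1)$-bi-invariance (so the above applies without re-averaging over a non-compact group), consecutive equators (or their bands) should overlap so that the corresponding constants differ only by a controlled amount, and the union $Z$ of the chain should fail to be relatively compact in $\mathrm{SL}(r,\R)$ while still containing a fixed equatorial element $z_0$ of the base copy $\mathrm{SO}(n)$.

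Granting such a construction, the conclusion is quick. Along the chain, $\varphi_m$ differs from a single scalar $c_m$ by less than, say, $\frac14$ at every point of $Z$, and $\varphi_m(z_0)=c_m$ exactly. Since $\varphi_m\in C_0$ and $Z$ escapes every compact set, there are points of $Z$ on which $|\varphi_m|<\frac14$, forcing $|c_m|<\frac12$; but $c_m=\varphi_m(z_0)\to1$, a contradiction. The construction of the chain is precisely where the bound $r\ge 2n-3$ is used, and where the computation becomes much more involved than in the case $n=3$, which recovers \cite[Theorem~B]{lafforguedelasalle}; the new analytic input making the general case accessible is exactly Proposition~\ref{prop=Schatten_class_estimates} and its consequence Lemma~\ref{lemma=mutlipliers_SOn}.
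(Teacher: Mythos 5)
The duality reduction to $p>2+\frac{2}{n-2}$ and the closing argument (a quantitative ``almost constancy'' along a path to infinity is incompatible with $\varphi_m\in C_0$ and $\varphi_m\to 1$ pointwise) are both correct and are indeed how the paper concludes. But the step you defer as ``the main obstacle'' --- the construction of the chain --- is precisely the content of the proof, and the mechanism you sketch for it is not the one that works. Two concrete problems. First, averaging over $\mathrm{SO}(n-1)\times\mathrm{SO}(n-1)$ is insufficient: the argument requires $\varphi_m$ to be bi-invariant under the full maximal compact $\mathrm{SO}(r)$, so that its value at any $g$ depends only on the Cartan projection (singular values) of $g$; without this you cannot convert estimates on an embedded copy of $\mathrm{SO}(n)$ into statements about $\varphi_m$ at diagonal matrices, which is where the path to infinity lives. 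Second, the paper does not use conjugates $b\,\mathrm{SO}(n)\,b^{-1}$ but \emph{two-sided translates} $D\,\mathrm{SO}(n)\,D$ with $D=\mathrm{diag}(e^{s/2},e^{t/2},\dots,e^{t/2})$: the map $g\mapsto\varphi(DgD)$ is still an $\mathrm{SO}(n-1)$-bi-invariant $S^p$-multiplier of $\mathrm{SO}(n)$ of norm at most $\|\varphi\|_{M(S^p)}$ (restriction of the Schur multiplier to $\mathrm{SO}(n)D^{-1}\times\mathrm{SO}(n)D$), and for $g$ a rotation by $\theta$ in the $(e_1,e_2)$-plane, $DgD$ has Cartan projection $\mathrm{diag}(e^x,e^y,e^t,\dots,e^t)$ with $x+y=s+t$ and $|\cos\theta|\le e^{t-u}$ when $(x,y)=(v,u)$. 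Combined with Lemma \ref{lemma=mutlipliers_SOn} this yields the key elementary move (Lemma \ref{lemma=multipliers_SLn}): weight can be transferred between two singular values, $\mathrm{diag}(e^v,e^u,e^t,\dots,e^t)\rightsquigarrow\mathrm{diag}(e^{v+\delta},e^{u-\delta},e^t,\dots,e^t)$, at a cost $2C_pe^{-\alpha_p(u-t-\delta)}\|\varphi\|_{M(S^p)}$ controlled by the gap to the $n-2$ smallest entries.

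The hypothesis $r\ge 2n-3=(n-2)+1+(n-2)$ then enters for a specific reason your sketch does not capture: iterating the elementary move $n-2$ times (and its image under the Cartan automorphism $g\mapsto (g^t)^{-1}$ another $n-2$ times) passes from $D(v,0,-v)$ to $D(v+\frac{\delta}{n-2},0,-v-\frac{\delta}{n-2})$ with total error $4(n-2)C_pe^{-\alpha_p(v-\delta)}\|\varphi\|_{M(S^p)}$, and summing these increments shows $\varphi(D(t,0,-t))$ satisfies the Cauchy criterion as $t\to\infty$, uniformly over multipliers of norm $\le 1$ (Proposition \ref{prop=invariant_multiplier_estimate}). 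By contrast, your proposed chain of \emph{conjugates} $b\,\mathrm{SO}(n)\,b^{-1}$ with overlapping compact ``equators'' is not constructed, and it is not clear that such overlaps exist or that the resulting union escapes to infinity with summable increments; note also that conjugation, unlike two-sided translation, keeps you inside a genuine subgroup and does not by itself move you along the Weyl chamber. As written, the proof is therefore incomplete at its decisive step.
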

Since for any locally compact group $G$, we have $A(G) \subset C_0(G)$, and since pointwise convergence is weaker than uniform convergence on compacta, the above result implies that $\mathrm{SL}(r,\R)$ does not have the $\mathrm{AP}_{p,\mathrm{cb}}^{\textrm{Schur}}$. As is mentioned in the introduction and more precisely in Lemma \ref{lem=cbapoapapschur}, a consequence of this is that for $p$ and $r$ as given in the theorem and a lattice $\Gamma$ in $\mathrm{SL}(r,\mathbb{Z})$, the noncommutative $L^p$-space $L^p(L(\Gamma))$ does not have the CBAP, i.e., Theorem \ref{thm=mainthmlp} follows directly.

The strategy of proving Theorem \ref{thm=nonAP_high_rank} is based on the approach of \cite{lafforguedelasalle} and \cite[Section 5]{haagerupdelaat}. Firstly, Proposition \ref{prop=Schatten_class_estimates} gives rise to certain local H\"older continuity estimates for $\mathrm{SO}(n)$-bi-invariant $S^p$-multipliers on $\mathrm{SL}(n,\R)$, as given in Lemma \ref{lemma=multipliers_SLn}. The next step is to find a path going to infinity in the Weyl chamber of $\mathrm{SL}(r,\R)$ for $r$ large enough by combining such local estimates. It turns out that $r=2n-3$ is enough. We now make this precise.

Fix $n \geq 3$. By embedding $\mathrm{SL}(2n-3,\R)$ into $\mathrm{SL}(r,\R)$ for $r\geq 2n-3$, we see that it is enough to consider the case $r = 2n-3$. Also, by duality, we can assume that $p>2+\frac{2}{n-2}$. Then the Theorem \ref{thm=nonAP_high_rank} follows from an averaging argument and Proposition \ref{prop=invariant_multiplier_estimate} below.

For $t,u,v \in \R$ with $t+\frac{u}{n-2}+v=0$, we use the notation 
\begin{equation}\label{eq=def_Duvt}D(v,u,t) = \mathrm{diag}(e^v,\dots,e^v,e^u,e^{t}\dots, e^{t}) \in \mathrm{SL}(2n-3,\R)\end{equation} for the diagonal matrix with $n-2$ diagonal entries equal to $e^v$, $1$ diagonal entry equal to $e^u$ and $n-2$ diagonal entries equal to $e^t$.
\begin{prop}\label{prop=invariant_multiplier_estimate} For $p>2+\frac{2}{n-2}$, there is a function $\varepsilon_p \in C_0(\R_+)$ such that for every $\mathrm{SO}(2n-3)$-bi-invariant multiplier $\varphi\colon \mathrm{SL}(2n-3,\R) \rightarrow \C$ of ${S^p(L^2(\mathrm{SL}(2n-3,\R)))}$, the function $\varphi(D(t,0,-t))$ has a limit $c$ for $t \to \infty$, and 
\[ |\varphi(D(t,0,-t)) - c | \leq \varepsilon_p(t) \|\varphi\|_{M(S^p)}.\]
\end{prop}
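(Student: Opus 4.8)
The plan is to reduce the statement to iterated applications of the local Hölder estimate coming from the sphere (Lemma \ref{lemma=mutlipliers_SOn}, in its $\mathrm{SL}(n,\R)$-form Lemma \ref{lemma=multipliers_SLn}). The key geometric fact is that inside $\mathrm{SL}(2n-3,\R)$ one can connect the diagonal matrix $D(t,0,-t)$ to a \emph{fixed} matrix near the identity by a path in the Weyl chamber, each step of which is an $\mathrm{SO}(n)$-translate of a matrix whose relevant $(1,1)$-type entry is small, \emph{uniformly in $t$}. More precisely: the subgroup $\mathrm{SL}(n,\R)$ embedded in the first $n$ coordinates acts on the $n$-sphere via $\mathrm{SO}(n)$, and conjugating a diagonal element of $\mathrm{SL}(n,\R)$ by a suitable rotation produces a matrix with a small $(1,1)$-entry when the diagonal is ``flat enough'' in the first $n-2$ coordinates. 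The assumption $t+\frac{u}{n-2}+v=0$ with $n-2$ repeated entries $e^v$ is exactly what makes the ratio of the largest eigenvalue of the $\mathrm{SL}(n,\R)$-block to $1$ controllable, hence gives the smallness of the off-diagonal entry after rotation.

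First I would state and prove Lemma \ref{lemma=multipliers_SLn}: an $\mathrm{SO}(n)$-bi-invariant $S^p$-multiplier $\varphi$ on $\mathrm{SL}(n,\R)$ (extended to $\mathrm{SL}(2n-3,\R)$ as $\mathrm{SO}(2n-3)$-bi-invariant, restricted to the $\mathrm{SL}(n,\R)$-block) satisfies a local Hölder bound of the form $|\varphi(g)-\varphi(g')|\le 2C_p\max(|g_{11}|^{\alpha_p},|g'_{11}|^{\alpha_p})\|\varphi\|_{M(S^p)}$ on $\mathrm{SL}(n,\R)$ — this follows from Lemma \ref{lemma=mutlipliers_SOn} by the Cartan decomposition $\mathrm{SL}(n,\R)=\mathrm{SO}(n)A^+\mathrm{SO}(n)$, since bi-invariance reduces everything to the $\mathrm{SO}(n)$-level where the operators $T_\delta$ live. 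Actually one needs a version where $g,g'$ differ by a diagonal move; the quantity $|g_{11}|$ should be read as the first coordinate of the unit vector $ge_1/|ge_1|$ or similar, and one checks that for a diagonal matrix $\mathrm{diag}(\lambda_1,\dots)$ conjugated into the $\mathrm{SO}(n)$-picture the relevant ``$\delta$'' is of order $\lambda_{\max}/\lambda_{\min}$ raised to a fixed negative power — I expect this bookkeeping, matching \cite[Section 5]{lafforguedelasalle} and \cite[Section 5]{haagerupdelaat}, to be the most delicate point.

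Then I would carry out the telescoping. Fix the continuous path $t\mapsto D(t,0,-t)$; I would interpolate between $D(t,0,-t)$ and $D(t_0,0,-t_0)$ (for a fixed large $t_0$) by a finite chain of intermediate diagonal matrices $D_1,\dots,D_N$ — the number $N$ depending only on $n$, not on $t$ — where consecutive matrices $D_j,D_{j+1}$ lie in a common $\mathrm{SL}(n,\R)$-subblock (one of the overlapping copies of $\mathrm{SL}(n,\R)$ inside $\mathrm{SL}(2n-3,\R)$; this is where $r=2n-3$, i.e. two overlapping $n$-blocks sharing the middle coordinate, is used) and are conjugate by a rotation moving the appropriate entry to size $\le e^{-ct}$ for a constant $c>0$. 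Applying Lemma \ref{lemma=multipliers_SLn} to each link gives $|\varphi(D_j)-\varphi(D_{j+1})|\le 2NC_p e^{-c\alpha_p t}\|\varphi\|_{M(S^p)}$, so $(\varphi(D(t,0,-t)))_t$ is Cauchy as $t\to\infty$, converges to some $c$, and the rate $\varepsilon_p(t)\in C_0(\R_+)$ is obtained by summing a geometric-type tail (using that we can also let $t_0\to\infty$ along the path). The main obstacle I anticipate is not the telescoping logic but verifying, with explicit matrices, that at each link the rotation genuinely makes the controlling entry exponentially small in $t$ while keeping the endpoints in a fixed $\mathrm{SO}(n)$-bi-invariant orbit — i.e. producing the concrete chain of diagonal matrices and rotations and checking the Cartan-projection estimates. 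Once Proposition \ref{prop=invariant_multiplier_estimate} is in place, Theorem \ref{thm=nonAP_high_rank} follows by the averaging argument: given a sequence $\varphi_n\in C_0$ with $\sup_n\|\varphi_n\|_{M(S^p)}<\infty$ and $\varphi_n\to 1$ pointwise, average each $\varphi_n$ over $\mathrm{SO}(2n-3)\times\mathrm{SO}(2n-3)$ (which does not increase the multiplier norm and preserves pointwise limits on bi-invariant sets), apply the proposition to get $\varphi_n(D(t,0,-t))\to c_n$ with $|\varphi_n(D(t,0,-t))-c_n|\le \varepsilon_p(t)\sup_m\|\varphi_m\|_{M(S^p)}$ uniformly in $n$, let $t\to\infty$ after $n\to\infty$ to force $c_n\to 1$, but then $\varphi_n\in C_0$ forces $c_n=\lim_{t\to\infty}\varphi_n(D(t,0,-t))=0$ for each $n$ (since $D(t,0,-t)\to\infty$ in $\mathrm{SL}(2n-3,\R)$), a contradiction.
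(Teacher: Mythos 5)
Your overall plan---reduce to iterated applications of the local H\"older estimate (Lemma \ref{lemma=mutlipliers_SOn} in its $\mathrm{GL}(n,\R)$-incarnation Lemma \ref{lemma=multipliers_SLn}), then telescope along a path in the Weyl chamber of $\mathrm{SL}(2n-3,\R)$---is exactly what the paper does, and your sketch of how to deduce Theorem \ref{thm=nonAP_high_rank} from the proposition (average over $\mathrm{SO}(2n-3)^2$, note $c_n=0$ from $C_0$, contradict $\psi_n\to 1$ pointwise) is also correct.

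However, the heart of your telescoping argument contains a genuine error. You claim a chain $D_1,\dots,D_N$ from $D(t,0,-t)$ to a fixed $D(t_0,0,-t_0)$ with $N$ \emph{depending only on $n$, not on $t$}. This cannot work: each link invokes Lemma \ref{lemma=multipliers_SLn} (or its $\mathrm{SL}(2n-3)$ consequence Lemma \ref{lemma=multipliers_SLr}) with a stepsize $\delta$ that must be strictly less than $u-t$ (resp.\ $v-u$), and the error incurred per link is $e^{-\alpha_p(u-t-\delta)}$: small $\delta$ gives small error but small advance; $\delta$ close to $u-t$ gives large advance but the error becomes $O(1)$. There is therefore no way to traverse a distance proportional to $t$ in a bounded number of links while keeping every link's error $e^{-c\alpha_p t}$; the number of links from $D(t,0,-t)$ to $D(s,0,-s)$ in the paper's proof is of the order $(2n-4)(s-t)/t$ and so genuinely depends on $t$ and $s$. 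The paper's Cauchy argument instead uses an \emph{unbounded} chain: it fixes $\delta=t/2$, sets $v_{k+1}=v_k+\tfrac{\delta}{n-2}$ starting at $v_0=t$, uses Lemmas \ref{lemma=multipliers_SLr} and \ref{lemma=multipliers_SLr_bis} to bound the $k$-th increment by $4(n-2)C_p\,e^{-\alpha_p(v_k-\delta)}\|\varphi\|_{M(S^p)}$, and sums the resulting geometric series $\sum_k e^{-\alpha_p(\frac12+\frac{k}{2n-4})t}$. The fact that you also speak of ``summing a geometric-type tail'' is a sign you half-see this; but a fixed-$N$ chain would yield a single bound $Ne^{-c\alpha_p t}$ with nothing left to sum over, so the proposal as written is internally inconsistent. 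Relatedly, you flag the Cartan-projection bookkeeping of Lemma \ref{lemma=multipliers_SLn} as the delicate part; in the paper that is a short standard computation (cf.\ \cite[Lemme 2.8]{lafforguestrengthenedpropertyt}), and the real subtlety is precisely the choice of $\delta$ and the combinatorics of the telescoping (the chaining of Lemma \ref{lemma=multipliers_SLr} with its Cartan-dual Lemma \ref{lemma=multipliers_SLr_bis} to return to $u=0$ after each macro-step) needed to make the error sum convergent.
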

The crucial step to prove this proposition is the following lemma.
\begin{lemma}\label{lemma=multipliers_SLn} Let $\varphi\colon \mathrm{GL}(n,\R) \rightarrow \C$ be a multiplier of $S^p(L^2(\mathrm{GL}(n,\R)))$ that is $\mathrm{SO}(n)$-bi-invariant, and let $t<u<v \in \R$. Then for $0<\delta<u-t$, we have
\begin{align*}|\varphi(\mathrm{diag}&(e^v,e^u,e^t,\dots,e^t)) - \varphi(\mathrm{diag}(e^{v+\delta},e^{u-\delta},e^t,\dots,e^t)) | \\ &\leq 2 C_p e^{-\alpha_p(u-t-\delta)} \|\varphi\|_{M(S^p)}.\end{align*}
\end{lemma}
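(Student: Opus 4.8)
The plan is to reduce the statement to Lemma \ref{lemma=mutlipliers_SOn} by finding a copy of $\mathrm{SO}(n)$ inside $\mathrm{GL}(n,\R)$ on which $\varphi$, restricted appropriately, becomes an $\mathrm{SO}(n-1)$-bi-invariant multiplier of the relevant Schatten class, and then tracking how the two diagonal matrices in the statement correspond to elements $g,g'$ of that $\mathrm{SO}(n)$ with controlled $(1,1)$-entries. First I would observe that it suffices to work with the $3\times 3$ upper-left block: the last $n-3$ diagonal entries are all $e^t$ and play no role, so by restricting to the block-diagonal subgroup $\mathrm{GL}(3,\R)\times \{e^t\mathrm{Id}_{n-3}\}$ — or rather by a direct computation in $\mathrm{GL}(n,\R)$ — the problem is about comparing $\mathrm{diag}(e^v,e^u,e^t)$ and $\mathrm{diag}(e^{v+\delta},e^{u-\delta},e^t)$ up to the common factor, hence (dividing by $e^t$) about $\mathrm{diag}(e^{v-t},e^{u-t},1)$ and $\mathrm{diag}(e^{v-t+\delta},e^{u-t-\delta},1)$.

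The key point is the Cartan/$KAK$ geometry: the double coset $\mathrm{SO}(n)\,a\,\mathrm{SO}(n)$ of a diagonal matrix $a$ is determined by its singular values, and for two diagonal matrices whose singular-value multisets are ``close'' one can interpolate through a single rotation. Concretely, set $s = u-t-\delta > 0$. I expect that the matrices $a=\mathrm{diag}(e^{v-t},e^{u-t},1)$ and $b=\mathrm{diag}(e^{v-t+\delta},e^{u-t-\delta},1)$ can both be written as $a = k_1 c \,\mathrm{diag}(e^{v-t},e^{s},1) \,c'$-type products where the ``difference'' is implemented by conjugating $\mathrm{diag}(e^{u-t},1)$ resp. $\mathrm{diag}(e^{u-t-\delta},e^{\delta})$ inside the $\{2,3\}$-plane by a rotation $R_\theta\in\mathrm{SO}(2)\subset\mathrm{SO}(n)$ (acting on coordinates $2,3$ and fixing coordinate $1$, hence lying in $\mathrm{SO}(n-1)$ if we think of coordinate $1$ as the distinguished one) — so that the only genuine change between $a$ and $b$ is absorbed into one rotation $g\in\mathrm{SO}(n)$. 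The crucial estimate is that this rotation $g$ has $|g_{11}|\leq e^{-s} = e^{-(u-t-\delta)}$, coming from the fact that the off-diagonal mixing needed to pass from $e^{u-t}$ to $e^{u-t-\delta}$ in the presence of an entry that is $e^{t}$-small relative to $e^u$ forces the cosine of the mixing angle to be exponentially small in the gap $u-t-\delta$. Once $\varphi$ is seen, via left and right $\mathrm{SO}(n)$-invariance, to agree at $a$ and $b$ with its values $\varphi(g_0)$, $\varphi(g_0 g)$ at two elements of $\mathrm{SO}(n)$ differing by $g$, Lemma \ref{lemma=mutlipliers_SOn} applied to the restriction $\varphi|_{\mathrm{SO}(n)}$ — which is an $\mathrm{SO}(n-1)$-bi-invariant $S^p(L^2(\mathrm{SO}(n)))$-multiplier because $S^p$-multiplier norms only go down under restriction to closed subgroups (as in \cite[Proposition 2.3]{lafforguedelasalle} / \cite[Section 3]{delaat1}) — gives $|\varphi(a)-\varphi(b)|\leq 2C_p\,|g_{11}|^{\alpha_p}\|\varphi\|_{M(S^p)} \leq 2C_p e^{-\alpha_p(u-t-\delta)}\|\varphi\|_{M(S^p)}$.

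The main obstacle I anticipate is the explicit $KAK$ bookkeeping: producing the rotation $g\in\mathrm{SO}(n)$ with the sharp bound $|g_{11}|\le e^{-(u-t-\delta)}$ and verifying that left/right multiplication by suitable elements of $\mathrm{SO}(n)$ really does carry the pair $(a,b)$ to $(g_0,g_0 g)$. This is essentially a two-dimensional computation with $2\times2$ rotation and diagonal matrices — writing $\mathrm{diag}(e^{u-t-\delta},e^{\delta}) = \sigma(\theta) \mathrm{diag}(e^{u-t},1)\tau(\theta)$ for appropriate $\sigma,\tau$ and reading off $\cos\theta$ — and the estimate $\cos\theta \le e^{-(u-t-\delta)}$ should drop out of comparing determinants and operator norms of the two sides. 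I would also need to double-check the reduction that the presence of the extra diagonal entries $e^t$ (equal to the smallest block value) does not interfere, but since those directions are fixed by all the rotations and conjugations used, and $\varphi$ is $\mathrm{SO}(n)$-bi-invariant, this is harmless. Everything after the geometric reduction is a direct invocation of Lemma \ref{lemma=mutlipliers_SOn}.
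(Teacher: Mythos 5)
Your rough target is right --- a $2\times 2$ rotation computation producing a bound $|g_{11}|\le e^{-(u-t-\delta)}$, fed into Lemma \ref{lemma=mutlipliers_SOn} --- but the reduction you describe has a genuine gap. You propose to apply Lemma \ref{lemma=mutlipliers_SOn} to ``the restriction $\varphi|_{\mathrm{SO}(n)}$'' after identifying $\varphi(a)$, $\varphi(b)$ with values $\varphi(g_0)$, $\varphi(g_0g)$ at points of $\mathrm{SO}(n)$. That cannot work: $\varphi$ is $\mathrm{SO}(n)$-bi-invariant, hence constant on double cosets $\mathrm{SO}(n)\,x\,\mathrm{SO}(n)$, and the double coset of a diagonal matrix such as $a=\mathrm{diag}(e^{v-t},e^{u-t},1,\dots,1)$ (which has non-unit singular values) is disjoint from $\mathrm{SO}(n)$. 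So $\varphi(a)$ is simply not a value of $\varphi|_{\mathrm{SO}(n)}$. The missing idea is the sandwich: one forms the \emph{new} function $\psi(g):=\varphi(DgD)$ on $\mathrm{SO}(n)$ for the diagonal $D = \mathrm{diag}(e^{s/2},e^{t/2},\dots,e^{t/2})$ with $s=u+v-t$ and applies Lemma \ref{lemma=mutlipliers_SOn} to $\psi$. The $(n-1)$ equal small entries $e^{t/2}$ are exactly what cause $D$ to commute with $\mathrm{SO}(n-1)$, hence make $\psi$ $\mathrm{SO}(n-1)$-bi-invariant, which is the bi-invariance Lemma \ref{lemma=mutlipliers_SOn} needs; and $\psi$ is an $S^p(L^2(\mathrm{SO}(n)))$-multiplier with $\|\psi\|_{M(S^p)}\le\|\varphi\|_{M(S^p)}$.

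Your $3\times3$ block reduction is also counterproductive: it would replace $\mathrm{SO}(n)$ by $\mathrm{SO}(3)$ and lose the $n$-dependent constants $C_p,\alpha_p$ from Proposition \ref{prop=Schatten_class_estimates} --- and the point of this lemma is precisely that those exponents improve with $n$. Moreover you place the rotation in the $\{2,3\}$-plane (fixing coordinate $1$, so $g_{11}=1$) yet require $|g_{11}|\le e^{-(u-t-\delta)}$; this is internally inconsistent. In the paper's computation the rotation $g=R_\theta$ is in the $\{1,2\}$-plane, one chooses $\theta$ (resp.\ $\theta'$) so that $DgD$ (resp.\ $Dg'D$) lands in the $\mathrm{SO}(n)$-double coset of $\mathrm{diag}(e^v,e^u,e^t,\dots)$ (resp.\ $\mathrm{diag}(e^{v+\delta},e^{u-\delta},e^t,\dots)$), and a determinant/norm comparison gives $|\cos\theta|\le e^{t-u}$, $|\cos\theta'|\le e^{t+\delta-u}$. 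Your proposed identity $\mathrm{diag}(e^{u-t-\delta},e^\delta)=\sigma\,\mathrm{diag}(e^{u-t},1)\,\tau$ with $\sigma,\tau$ rotations cannot hold, since the two diagonals have different singular values. I would rework the reduction around the sandwich function $\psi(g)=\varphi(DgD)$ with the paper's choice of $D$, and put the rotation in the $\{1,2\}$-plane.
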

\begin{proof}
Let $u'=u-\delta$ and $v'=v+\delta$, and let $s = v+u-t$. Then $u,v,u',v' \in (t,s)$ and $u+v=u'+v'=s+t$. Consider the matrix $D= \mathrm{diag}(e^{\frac{s}{2}},e^{\frac{t}{2}},\dots,e^{\frac{t}{2}}) \in \mathrm{GL}(n,\R)$. The map $g \in \mathrm{SO}(n) \mapsto \varphi(D g D)$ is an $\mathrm{SO}(n-1)$-bi-invariant multiplier of $S^p(L^2(\mathrm{SO}(n)))$ of norm less than $\|\varphi\|_{M(S^p)}$, so that by Lemma \ref{lemma=mutlipliers_SOn},
\[ |\varphi(DgD) - \varphi(Dg'D)| \leq 2 C_p \max(|g_{11}|^{\alpha_p}, |g'_{11}|^{\alpha_p}) \|\varphi\|_{M(S^p)}.\]
Let now $g$ (resp.~$g'$) be a rotation of angle $\theta$ (resp.~$\theta'$) in the plane generated by the two first coordinate vectors of $\R^n$, so that $g_{11} = \cos \theta$ and $g'_{11} = \cos \theta'$. Then $\varphi(DgD) = \varphi(\mathrm{diag}(e^x,e^y,e^t,\dots,e^t))$ where $x\geq y$ are determined by
\[ \begin{pmatrix} e^{\frac{s}{2}} & 0 \\ 0 & e^{\frac{t}{2}}\end{pmatrix} \begin{pmatrix} \cos \theta & -\sin \theta \\ \sin \theta & \cos \theta\end{pmatrix} \begin{pmatrix} e^{\frac{s}{2}} & 0 \\ 0 & e^{\frac{t}{2}}\end{pmatrix} \in \mathrm{SO}(2) \begin{pmatrix} e^{x} & 0 \\ 0 & e^{y}\end{pmatrix} \mathrm{SO}(2).\]
By a simple computation (see also \cite[Lemme 2.8]{lafforguestrengthenedpropertyt} or \cite[Lemma 5.5]{haagerupdelaat}), there is a $\theta$ such that $x=v$, $y=u$ and $|\cos \theta| \leq e^{v-s} = e^{t-u}$. Similarly, there is $\theta'$ such that $|\cos \theta'|\leq e^{v+\delta - s} = e^{t+\delta - u}$ and such that $\varphi(D g' D) = \varphi(\mathrm{diag}(e^{v+\delta},e^{u-\delta},e^t,\dots,e^t))$. This proves the lemma.
\end{proof}
\begin{lemma}\label{lemma=multipliers_SLr} Let $\varphi\colon \mathrm{SL}(2n-3,\R) \rightarrow \C$ be an $\mathrm{SO}(2n-3)$-bi-invariant multiplier of $S^p(L^2(\mathrm{SL}(2n-3,\R)))$, and let $t<u<v \in \R$ with $t+\frac{u}{n-2}+v = 0$. Then for $0<\delta<u-t$, we have
\[|\varphi(D(v,u,t))-\varphi(D(v+\frac{\delta}{n-2},u-\delta,t)) | \leq 2(n-2) C_p e^{-\alpha_p(u-t-\delta)} \|\varphi\|_{M(S^p)}.\]
\end{lemma}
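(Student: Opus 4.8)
The plan is to deduce Lemma~\ref{lemma=multipliers_SLr} from Lemma~\ref{lemma=multipliers_SLn} by moving the exponent $\delta$ off the middle diagonal entry and distributing it, in $n-2$ equal portions $\frac{\delta}{n-2}$, onto the $n-2$ copies of $e^v$, applying Lemma~\ref{lemma=multipliers_SLn} once at each portion and telescoping. Concretely, for $0 \le j \le n-2$ let $M_j \in \mathrm{SL}(2n-3,\R)$ be the diagonal matrix whose first $n-2$ entries are $j$ copies of $e^{v+\delta/(n-2)}$ followed by $n-2-j$ copies of $e^v$, whose middle entry is $e^{u-j\delta/(n-2)}$, and whose last $n-2$ entries all equal $e^t$. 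The relation $t+\frac{u}{n-2}+v=0$ guarantees $\det M_j=1$, and one has $M_0=D(v,u,t)$ and $M_{n-2}=D(v+\frac{\delta}{n-2},u-\delta,t)$. Setting $u_j=u-(j-1)\frac{\delta}{n-2}$, consecutive matrices $M_{j-1}$ and $M_j$ coincide on $n-3$ of their diagonal entries and differ only in that one copy of $e^v$ becomes $e^{v+\delta/(n-2)}$ while the middle entry drops from $e^{u_j}$ to $e^{u_j-\delta/(n-2)}$.

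For each $j\in\{1,\dots,n-2\}$ I would bound $|\varphi(M_{j-1})-\varphi(M_j)|$ via Lemma~\ref{lemma=multipliers_SLn}. To this end, restrict $\varphi$ to the subgroup of $\mathrm{SL}(2n-3,\R)$ carrying a $\mathrm{GL}(n,\R)$ acting on the two coordinates where $M_{j-1}$ and $M_j$ differ together with the $n-2$ coordinates of the $e^t$-block (the latter playing the role of the $\mathrm{diag}(e^t,\dots,e^t)$ tail in Lemma~\ref{lemma=multipliers_SLn}), the remaining $n-3$ coordinates---on which $M_{j-1}$ and $M_j$ agree---being absorbed into an auxiliary abelian factor that can be frozen since left and right translations leave the $S^p$-multiplier norm unchanged. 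Because restriction to a closed subgroup does not increase the $S^p$-multiplier norm and $\mathrm{SO}(2n-3)$-bi-invariance of $\varphi$ restricts to $\mathrm{SO}(n)$-bi-invariance on the block, this procedure produces an $\mathrm{SO}(n)$-bi-invariant multiplier of $S^p(L^2(\mathrm{GL}(n,\R)))$ of norm at most $\|\varphi\|_{M(S^p)}$ sending the $\mathrm{GL}(n,\R)$-matrices $\mathrm{diag}(e^v,e^{u_j},e^t,\dots,e^t)$ and $\mathrm{diag}(e^{v+\delta/(n-2)},e^{u_j-\delta/(n-2)},e^t,\dots,e^t)$ to $\varphi(M_{j-1})$ and $\varphi(M_j)$. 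Since $j\le n-2$ forces $j\frac{\delta}{n-2}\le\delta<u-t$, the shift $\frac{\delta}{n-2}$ satisfies $0<\frac{\delta}{n-2}<u_j-t$, so Lemma~\ref{lemma=multipliers_SLn} applied to the triple $t<u_j<v$ yields
\[ |\varphi(M_{j-1})-\varphi(M_j)| \le 2C_p\,e^{-\alpha_p(u_j-t-\delta/(n-2))}\,\|\varphi\|_{M(S^p)} = 2C_p\,e^{-\alpha_p(u-t-j\delta/(n-2))}\,\|\varphi\|_{M(S^p)}. \]

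Summing this telescoping estimate over $j=1,\dots,n-2$ and using $u-t-j\frac{\delta}{n-2}\ge u-t-\delta$ for every such $j$ then gives
\[ \bigl|\varphi(D(v,u,t))-\varphi(D(v+\tfrac{\delta}{n-2},u-\delta,t))\bigr| \le \sum_{j=1}^{n-2} 2C_p\,e^{-\alpha_p(u-t-j\delta/(n-2))}\,\|\varphi\|_{M(S^p)} \le 2(n-2)C_p\,e^{-\alpha_p(u-t-\delta)}\,\|\varphi\|_{M(S^p)}, \]
which is the claimed inequality. I expect the only delicate point to be the reduction in the second paragraph: Lemma~\ref{lemma=multipliers_SLn} is formulated for $\mathrm{GL}(n,\R)$, so one has to verify it can be invoked inside $\mathrm{SL}(2n-3,\R)$ with no loss in the constant despite the determinant constraint, which is precisely what the stability of $S^p$-multipliers under restriction to closed subgroups and under translation delivers; the rest is routine bookkeeping with the exponents.
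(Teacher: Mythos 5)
Your proof is correct and follows essentially the same route as the paper: the same telescoping chain $M_0=D(v,u,t),\dots,M_{n-2}=D(v+\tfrac{\delta}{n-2},u-\delta,t)$ (the paper's $D_k$, up to a permutation of the first $n-2$ coordinates, which is irrelevant by $\mathrm{SO}(2n-3)$-bi-invariance), the same application of Lemma~\ref{lemma=multipliers_SLn} to each consecutive pair with parameters $(t,u_j,v)$ and shift $\delta/(n-2)$, and the same monotonicity estimate $e^{-\alpha_p(u-t-j\delta/(n-2))}\le e^{-\alpha_p(u-t-\delta)}$ to bound the sum by $2(n-2)C_p e^{-\alpha_p(u-t-\delta)}\|\varphi\|_{M(S^p)}$. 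The extra paragraph you add about restricting to a $\mathrm{GL}(n,\R)$-block and freezing the remaining coordinates just makes explicit what the paper leaves implicit when invoking Lemma~\ref{lemma=multipliers_SLn} inside $\mathrm{SL}(2n-3,\R)$.
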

\begin{proof}
By the triangle inequality, we can write
\[|\varphi(D(v,u,t))-\varphi(D(v+\frac{\delta}{n-2},u-\delta,t)) | \leq \sum_{k=1}^{n-2} |\varphi(D_{k-1})  - \varphi(D_k)|,\]
where $D_k$ is the diagonal matrix with $(n-2-k)$ eigenvalues equal to $e^v$, $k$ eigenvalues equal to $e^{v+\frac{\delta}{n-2}}$, one eigenvalue equal to $e^{u-\frac{k}{n-2}\delta}$, and $n-2$ eigenvalues equal to $e^t$. By Lemma \ref{lemma=multipliers_SLn}, for each $k$ the term $|\varphi(D_{k-1})  - \varphi(D_k)|$ is less than $2C_p e^{-\alpha_p(u-t-\frac{k}{n-2}\delta)} \|\varphi\|_{M(S^p)}$, which is less than $2C_p e^{-\alpha_p(u-t-\delta)} \|\varphi\|_{M(S^p)}$ and proves the lemma.
\end{proof}
By conjugating by the Cartan automorphism $g \mapsto (g^t)^{-1}$, we get the following.
\begin{lemma}\label{lemma=multipliers_SLr_bis} Let $\varphi\colon \mathrm{SL}(2n-3,\R) \rightarrow \C$ be an $\mathrm{SO}(2n-3)$-bi-invariant multiplier of $S^p(L^2(\mathrm{SL}(2n-3,\R)))$, and let $t<u<v \in \R$ with $t+\frac{u}{n-2}+v = 0$. Then for $0<\delta<v-u$, we have
\[|\varphi(D(v,u,t))-\varphi(D(v,u+\delta,t-\frac{\delta}{n-2})) | \leq 2(n-2) C_p e^{-\alpha_p(v-u-\delta)} \|\varphi\|_{M(S^p)}.\]
\end{lemma}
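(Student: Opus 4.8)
The plan is to derive Lemma \ref{lemma=multipliers_SLr_bis} from Lemma \ref{lemma=multipliers_SLr} by precomposing $\varphi$ with the Cartan automorphism $\theta\colon g\mapsto (g^t)^{-1}$ of $\mathrm{SL}(2n-3,\R)$. First I would check that $\psi:=\varphi\circ\theta$ is again an $\mathrm{SO}(2n-3)$-bi-invariant multiplier of $S^p(L^2(\mathrm{SL}(2n-3,\R)))$ with $\|\psi\|_{M(S^p)}=\|\varphi\|_{M(S^p)}$. Indeed, $\theta$ is a continuous automorphism of the unimodular group $\mathrm{SL}(2n-3,\R)$ with $\theta^2=\mathrm{id}$, so it preserves the Haar measure; hence $f\mapsto f\circ\theta$ is a unitary operator on $L^2(\mathrm{SL}(2n-3,\R))$ conjugating $M_\psi$ to $M_\varphi$, which gives the statement about the multiplier norm (and likewise for the cb-norm). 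Moreover $\theta$ fixes $\mathrm{SO}(2n-3)$ pointwise, since $k^{-1}=k^t$ for $k\in\mathrm{SO}(2n-3)$, so $\psi$ is still $\mathrm{SO}(2n-3)$-bi-invariant.

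Next I would record two elementary facts. On the one hand, $\theta(\mathrm{diag}(d_1,\dots,d_{2n-3}))=\mathrm{diag}(d_1^{-1},\dots,d_{2n-3}^{-1})$. On the other hand, the $\mathrm{SO}(2n-3)$-bi-invariance of $\varphi$ allows one to permute the diagonal entries of a positive diagonal matrix without changing the value of $\varphi$, because two such matrices are conjugate by a suitable signed permutation matrix of determinant $1$ lying in $\mathrm{SO}(2n-3)$ (and conjugation of a diagonal matrix by the diagonal sign factor is trivial). Combining these, for $t<u<v$ with $t+\tfrac{u}{n-2}+v=0$ and any real $a$, I get $\psi(D(-t,-u,-v))=\varphi(D(v,u,t))$ and $\psi\big(D(-t+\tfrac{a}{n-2},\,-u-a,\,-v)\big)=\varphi\big(D(v,\,u+a,\,t-\tfrac{a}{n-2})\big)$.

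Finally I would apply Lemma \ref{lemma=multipliers_SLr} to $\psi$ at the parameters $(\bar v,\bar u,\bar t)=(-t,-u,-v)$, which satisfy $\bar t<\bar u<\bar v$ and $\bar t+\tfrac{\bar u}{n-2}+\bar v=0$, and with $\bar\delta=\delta$; note that the hypothesis $0<\bar\delta<\bar u-\bar t$ of Lemma \ref{lemma=multipliers_SLr} is exactly $0<\delta<v-u$, and that $\bar u-\bar t-\delta=v-u-\delta$. Rewriting the two values of $\psi$ appearing there via the identities of the previous paragraph (with $a=\delta$), and using $\|\psi\|_{M(S^p)}=\|\varphi\|_{M(S^p)}$, yields precisely $|\varphi(D(v,u,t))-\varphi(D(v,u+\delta,t-\tfrac{\delta}{n-2}))|\le 2(n-2)C_p e^{-\alpha_p(v-u-\delta)}\|\varphi\|_{M(S^p)}$, which is the claim. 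There is no genuine obstacle here; the only point requiring care is the sign bookkeeping in pushing the conclusion of Lemma \ref{lemma=multipliers_SLr} back through $\theta$ and the coordinate permutation, all of which is routine.
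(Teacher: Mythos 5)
Your proposal is correct and is precisely the argument the paper intends: the paper states the lemma with only the one-line justification ``By conjugating by the Cartan automorphism $g \mapsto (g^t)^{-1}$,'' and you have filled in exactly the bookkeeping that this phrase elides — that $\theta$ fixes $\mathrm{SO}(2n-3)$ pointwise, preserves Haar measure and hence multiplier norms, inverts diagonal entries, and that the resulting diagonal matrices can be reordered into the $D(\cdot,\cdot,\cdot)$ form using $\mathrm{SO}(2n-3)$-bi-invariance, after which Lemma \ref{lemma=multipliers_SLr} applied to $\psi=\varphi\circ\theta$ at $(\bar v,\bar u,\bar t)=(-t,-u,-v)$ with $\bar\delta=\delta$ yields the stated inequality. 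All the sign and constraint checks ($\bar t+\tfrac{\bar u}{n-2}+\bar v=0$, $0<\bar\delta<\bar u-\bar t$, $\bar u-\bar t-\bar\delta=v-u-\delta$) are correct.
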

\begin{proof}[Proof of Proposition \ref{prop=invariant_multiplier_estimate}]
By combining the above two lemmas, we get that for $0<\delta<v$,
\[ |\varphi(D(v,0,-v)) - \varphi(D(v+\frac{\delta}{n-2},0,-v-\frac{\delta}{n-2}))| \leq 4(n-2) C_p e^{-\alpha_p (v-\delta)} \|\varphi\|_{M(S^p)}.\]
This implies that $\varphi(D(t,0,-t))$ satisfies the Cauchy criterion, and hence has a limit. Indeed, for $t \leq s$, define $\delta = \frac{t}{2}$ and the sequence $(v_k)$ by $v_0=t$ and $v_{k+1} = \min(s,v_k+\frac{\delta}{n-2})$ (so that $v_k = \min\{s,(1+\frac{k}{2n-4})t\}$). If $N$ is the first index such that $v_N=s$, then
\begin{multline*}
|\varphi(D(t,0,-t)) - \varphi(D(s,0,-s))|\leq \sum_{k=0}^{N-1} |\varphi(D(v_k,0,-v_k)) - \varphi(D(v_{k+1},0,-v_{k+1}))| \\ \leq \sum_{k=0}^{N-1} 4(n-2) C_p e^{-\alpha_p (\frac 1 2 + \frac{k}{2n-4})t} \leq 4(n-2)C_p \frac{e^{-\alpha_p \frac{t}{2}}}{1-e^{-\alpha_p \frac{t}{2n-4}}}.
\end{multline*}
This proves Proposition \ref{prop=invariant_multiplier_estimate}.
\end{proof}
We can now generalize our result to higher rank simple Lie groups. We will assume that the real rank is at least $9$, so that we only need to consider the classical Lie groups, since all exceptional Lie groups have real rank $8$ or less.
\begin{lemma}\label{lemma=dynkin}
Let $G$ be a connected simple real Lie group with real rank $N \geq 9$. Then $G$ contains a connected closed subgroup $H$ locally isomorphic to $\mathrm{SL}(N,\mathbb{R})$.
\end{lemma}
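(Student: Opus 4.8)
The plan is to reduce $G$ to the classical case and then to produce the copy of $\mathrm{SL}(N,\R)$ from a Chevalley--Serre presentation sitting inside the restricted root system of $G$. Write $\mathfrak g = \mathrm{Lie}(G)$, fix a Cartan decomposition $\mathfrak g = \mathfrak k \oplus \mathfrak p$ with Cartan involution $\theta$, a maximal abelian subspace $\mathfrak a \subset \mathfrak p$ (so $\dim \mathfrak a = N$), and let $\Sigma \subset \mathfrak a^*$ be the restricted root system, with restricted root spaces $\mathfrak g_\alpha$; since $G$ is simple, $\Sigma$ is irreducible of rank $N$. Because $N \geq 9$ while every exceptional simple real Lie algebra has real rank at most $8$ (the value $8$ being attained only by the split form of $E_8$), the algebra $\mathfrak g$ is classical, so $\Sigma$ is of type $A_N$, $B_N$, $C_N$, $D_N$ or $BC_N$. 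In each of these five cases one has, in suitable Euclidean coordinates $(e_i)$ on the span of $\Sigma$, that $\{e_i - e_j \mid 1 \leq i \neq j \leq N\} \subset \Sigma$; this is a root subsystem of type $A_{N-1}$, with simple roots $\alpha_k = e_k - e_{k+1}$ ($1 \leq k \leq N-1$) and Cartan matrix $(a_{kl})$ equal to that of $\mathrm{SL}(N,\R)$.

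Next I would build the corresponding $\mathfrak{sl}_2$-triples. For each $k$ pick a nonzero $X_k \in \mathfrak g_{\alpha_k}$; by the standard structure theory of restricted roots there are $Y_k \in \mathfrak g_{-\alpha_k}$ (a suitable multiple of $-\theta X_k$) and $H_k \in \mathfrak a$ such that $(X_k, Y_k, H_k)$ is an $\mathfrak{sl}_2$-triple with $\alpha_l(H_k) = a_{kl}$. I claim the family $(X_k, Y_k, H_k)_k$ satisfies the Serre relations attached to the Cartan matrix $(a_{kl})$. The relations $[H_k,H_l] = 0$, $[H_k, X_l] = a_{kl} X_l$, $[H_k, Y_l] = -a_{kl} Y_l$ and $[X_k, Y_k] = H_k$ hold by construction, and $[X_k, Y_l] = 0$ for $k \neq l$ because $\alpha_k - \alpha_l$ has three or four nonzero coordinates and so is not a root, whence $\mathfrak g_{\alpha_k - \alpha_l} = 0$. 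For the same reason $\alpha_k + \alpha_l \notin \Sigma$ when $|k-l| \geq 2$, giving $[X_k, X_l] = [Y_k, Y_l] = 0$ there; and when $|k - l| = 1$ the vector $2\alpha_k + \alpha_l$ has an entry equal to $\pm 2$ together with two further nonzero entries, which no root of type $A$, $B$, $C$, $D$ or $BC$ has, so $\mathfrak g_{2\alpha_k + \alpha_l} = 0$ and therefore $(\mathrm{ad}\, X_k)^{1 - a_{kl}} X_l = (\mathrm{ad}\, X_k)^2 X_l = 0$, and likewise for the $Y$'s. By Serre's theorem this data defines a Lie algebra homomorphism $\mathfrak{sl}(N,\R) \to \mathfrak g$; since $\mathfrak{sl}(N,\R)$ is simple and the map is nonzero, it is injective, so its image $\mathfrak h$ is a subalgebra of $\mathfrak g$ isomorphic to $\mathfrak{sl}(N,\R)$.

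Finally, let $H \subset G$ be the connected Lie subgroup with $\mathrm{Lie}(H) = \mathfrak h$. As $\mathfrak h$ is semisimple it is an algebraic subalgebra of $\mathfrak g$, so (by Mal'cev's description of the closure of an analytic subgroup) $H$ is closed in $G$; being connected with Lie algebra $\mathfrak h \cong \mathfrak{sl}(N,\R)$, it is locally isomorphic to $\mathrm{SL}(N,\R)$, which is what we want. The points I expect to require genuine care are the bookkeeping ones: recalling the list of possible types of $\Sigma$ (this is where $N \geq 9$, excluding the exceptional groups, is used), the verification that $2\alpha_k + \alpha_l$ is never a restricted root, and the appeal to closedness of a connected subgroup with semisimple Lie algebra. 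As an alternative to the uniform argument above, one could instead go through the classical families one by one --- $\mathrm{SL}(m,\R)$, $\mathrm{SU}(p,q)$, $\mathrm{SO}(p,q)$, $\mathrm{SO}^*(2m)$, $\mathrm{Sp}(2m,\R)$, $\mathrm{Sp}(p,q)$, $\mathrm{SL}(m,\mathbb{H})$ --- and in each exhibit an explicit block subgroup isomorphic to $\mathrm{SL}(N,\R)$, using the inclusions $\R \subset \mathbb{C} \subset \mathbb{H}$ for the ones defined over $\mathbb{C}$ or $\mathbb{H}$.
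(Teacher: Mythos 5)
Your proof is correct in outline and takes a genuinely different route from the paper's. The paper reduces to the classical case exactly as you do (rank $\geq 9$ excludes the exceptional groups), but then cites Dynkin's classification of regular semisimple subalgebras of simple Lie algebras as a black box to obtain $\mathfrak{sl}_N \subset \mathfrak{g}$. You instead build the copy of $\mathfrak{sl}(N,\R)$ by hand: you locate the $A_{N-1}$ subsystem $\{e_i - e_j\}$ of the restricted root system, form the $\mathfrak{sl}_2$-triples attached to the simple roots $e_k - e_{k+1}$, and verify the Chevalley--Serre relations using the elementary observation that $\alpha_k - \alpha_l$, $\alpha_k + \alpha_l$ ($|k-l|\geq 2$) and $2\alpha_k \pm \alpha_{k\pm1}$ are never restricted roots of classical type. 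All of these root-vanishing checks are correct, the normalization $\alpha_l(H_k) = a_{kl}$ follows because $H_k$ is the coroot of $\alpha_k$ intrinsic to $\mathfrak{a}$, and over $\R$ the Serre presentation yields precisely the split form $\mathfrak{sl}(N,\R)$, so the image is an injective copy of $\mathfrak{sl}(N,\R)$. Your construction is more elementary and self-contained than an appeal to Dynkin's lists, at the cost of some case bookkeeping; the paper's version is shorter but relies on a heavier classification result. Your parenthetical remark that one could alternatively go family by family is exactly what Dynkin's classification packages.

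The one place where your argument is not quite tight is closedness of $H$. You write that $\mathfrak{h}$, being semisimple, is an algebraic subalgebra and so "by Mal'cev's description of the closure of an analytic subgroup $H$ is closed in $G$." As stated, this reasoning applies when $G$ is a \emph{linear} Lie group (image of a faithful finite-dimensional representation), where algebraicity of $\mathfrak{h}$ forces the analytic subgroup to be closed. But a connected simple real Lie group need not be linear --- for instance finite or infinite covers of $\mathrm{Sp}(2n,\R)$ have rank $n\geq 9$ in the range under consideration and are not linear. The paper circumvents this by invoking Mostow's theorem, which explicitly uses the hypothesis that $G$ has \emph{discrete center}: a connected subgroup of $G$ with semisimple Lie algebra is then automatically closed (see the reference to Dorofaeff's Corollary 1 for a clean formulation). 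If you want to keep your Mal'cev-flavoured argument, you should pass through $\mathrm{Ad}\colon G \to \mathrm{GL}(\mathfrak{g})$: the kernel $Z(G)$ is discrete, the image is linear, $\mathrm{Ad}(H)$ is closed there by algebraicity of $\mathrm{ad}(\mathfrak{h})$, and then $H$ is recovered as the identity component of $\mathrm{Ad}^{-1}(\mathrm{Ad}(H)) = H\,Z(G)$, hence closed. The discreteness of $Z(G)$ is the key hypothesis, and your write-up glosses over it.
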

\begin{proof}
  Since we assume the real rank of $G$ to be at least $9$, the group $G$ is a classical Lie group. By Dynkin's classification of regular semisimple Lie subalgebras of semisimple Lie algebras, it is known that every simple Lie algebra of rank $N \geq 9$ contains $\mathfrak{sl}_N$ as a Lie subalgebra \cite{dynkin} (see \cite{dynkinselectedworks} for a translation). This Lie subalgebra gives rise to a connected Lie subgroup $H$ in $G$ that is locally isomorphic to $\mathrm{SL}(N,\mathbb{R})$, and by a result of Mostow, it is closed, since $G$ has discrete center \cite[last theorem on p.~614]{mostow} (see also \cite[Corollary 1]{dorofaeff}).
\end{proof}
\begin{thm} \label{thm=mainthmlp_generalgroups}
  Let $n \geq 6$, let $N \geq 2n-3$, and let $\Gamma$ be a lattice in a connected simple real Lie group with real rank at least $N$. Then the noncommutative $L^p$-space $L^p(L(\Gamma))$ does not have the CBAP for $p \in \left[1,2-\frac 2 n\right) \cup \left(2+\frac 2 {n-2},\infty\right]$.
\end{thm}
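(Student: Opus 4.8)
The plan is to deduce Theorem~\ref{thm=mainthmlp_generalgroups} from Theorem~\ref{thm=nonAP_high_rank} by a purely structural argument: we transfer the failure of the $\mathrm{AP}_{p,\mathrm{cb}}^{\textrm{Schur}}$ from $\mathrm{SL}(2n-3,\R)$ to an arbitrary high rank simple Lie group, and then apply Lemma~\ref{lem=cbapoapapschur}. It is enough to treat $p\in(1,\infty)$: the endpoints $p=1$ and $p=\infty$ lie in the range $[1,\frac{10}{9})\cup(10,\infty]$, which is already covered for all lattices in connected simple real Lie groups of real rank at least $2$ by \cite{delaat1}, \cite{haagerupdelaat2} and \cite{delaatdelasalle} (alternatively, one invokes the endpoint versions of Lemma~\ref{lem=cbapoapapschur}). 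So fix $p\in(1,2-\frac2n)\cup(2+\frac2{n-2},\infty)$. By Theorem~\ref{thm=nonAP_high_rank} applied with $r=2n-3$, together with the inclusion $A(G)\subset C_0(G)$, the inequality $\|\cdot\|_{M(S^p)}\le\|\cdot\|_{cbM(S^p)}$, and the fact that uniform convergence on compacta implies pointwise convergence, the group $\mathrm{SL}(2n-3,\R)$ does not have the $\mathrm{AP}_{p,\mathrm{cb}}^{\textrm{Schur}}$.

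Now let $G$ be a connected simple real Lie group, let $N'$ be its real rank, and let $\Gamma<G$ be a lattice; by hypothesis $N'\ge N\ge 2n-3$, and since $n\ge 6$ this gives $N'\ge 2n-3\ge 9$. By Lemma~\ref{lemma=dynkin}, $G$ contains a connected closed subgroup $H$ locally isomorphic to $\mathrm{SL}(N',\R)$. Because $N'\ge 2n-3$, the block-diagonal embedding (acting as the identity on the last $N'-(2n-3)$ coordinates) realizes $\mathrm{SL}(2n-3,\R)$ as a closed subgroup of $\mathrm{SL}(N',\R)$. We then chain the permanence properties of the $\mathrm{AP}_{p,\mathrm{cb}}^{\textrm{Schur}}$ recalled in Section~\ref{subsec=apschur}: since the property passes to closed subgroups and $\mathrm{SL}(2n-3,\R)$ is a closed subgroup of $\mathrm{SL}(N',\R)$ lacking it, the group $\mathrm{SL}(N',\R)$ lacks it as well; since the property is preserved under local isomorphisms of Lie groups with finite center, and every Lie group locally isomorphic to $\mathrm{SL}(N',\R)$ has finite center (as $N'\ge 3$), the group $H$ lacks it; and since $H$ is a closed subgroup of $G$, the group $G$ lacks the $\mathrm{AP}_{p,\mathrm{cb}}^{\textrm{Schur}}$. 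As $\Gamma$ is a lattice in $G$ and $p\in(1,\infty)$, Lemma~\ref{lem=cbapoapapschur} then yields that $L^p(L(\Gamma))$ has neither the CBAP nor the OAP. Combined with the cases $p\in\{1,\infty\}$, this proves the theorem.

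Essentially all of the analytic content sits in Theorem~\ref{thm=nonAP_high_rank}, hence ultimately in Proposition~\ref{prop=Schatten_class_estimates} and the harmonic analysis on $\sphere^{n-1}$; the rest is bookkeeping with the known stability properties of the $\mathrm{AP}_{p,\mathrm{cb}}^{\textrm{Schur}}$. The only step that requires genuine care---and the reason the hypothesis is $n\ge 6$ rather than $n\ge 3$ as in Theorem~\ref{thm=mainthmlp}---is the passage through a general, possibly non-linear, simple Lie group: Lemma~\ref{lemma=dynkin} produces a subgroup $H$ that is merely \emph{locally} isomorphic to $\mathrm{SL}(N',\R)$ and that is closed in $G$ only via Mostow's theorem, so one must invoke the local-isomorphism invariance of the property (which is where the finite-center condition, automatic once $N'\ge 3$, is used) rather than trying to find an honest copy of $\mathrm{SL}(N',\R)$ in $G$; and the appeal to Dynkin's classification of regular semisimple subalgebras in Lemma~\ref{lemma=dynkin} is available only once the real rank is at least $9$, which forces $2n-3\ge 9$, that is, $n\ge 6$.
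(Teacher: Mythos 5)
Your proof is correct and follows essentially the same route as the paper: use Theorem~\ref{thm=nonAP_high_rank} to establish that $\mathrm{SL}(2n-3,\R)$ lacks the $\mathrm{AP}_{p,\mathrm{cb}}^{\textrm{Schur}}$, transfer this failure to $G$ via Lemma~\ref{lemma=dynkin} together with the permanence of the $\mathrm{AP}_{p,\mathrm{cb}}^{\textrm{Schur}}$ under passage to closed subgroups and under local isomorphisms of Lie groups with finite center, and conclude with Lemma~\ref{lem=cbapoapapschur}. You are in fact slightly more careful than the paper in explicitly disposing of the endpoints $p\in\{1,\infty\}$ (to which Lemma~\ref{lem=cbapoapapschur} as stated does not apply) and in distinguishing the actual real rank $N'$ of $G$ from the hypothesis bound $N$.
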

\begin{proof} 
Let $H \subset G$ be a closed subgroup locally isomorphic to $\mathrm{SL}(N,\R)$ given by Lemma \ref{lemma=dynkin}. Then $H$ has finite center, because the fundamental group of $\mathrm{SL}(N,\R)$ is finite. Hence, $H$ does not have the $\mathrm{AP}_{p,\mathrm{cb}}^{\textrm{Schur}}$, since $\mathrm{SL}(N,\R)$ does not have it (see Section \ref{subsec=apschur}). Since the $\mathrm{AP}_{p,\mathrm{cb}}^{\textrm{Schur}}$ passes to closed subgroups (see Section \ref{subsec=apschur}), the group $G$ does not have the $\mathrm{AP}_{p,\mathrm{cb}}^{\textrm{Schur}}$ either. The result now follows by applying Lemma \ref{lem=cbapoapapschur}.
\end{proof}
\begin{rem}
Actually, a stronger statement is true, namely, for $\Gamma$ and $p$ as above, the space $L^p(L(\Gamma))$ does not have the operator space approximation property (OAP) (see section \ref{sec=preliminaries} for the definition). By Lemma \ref{lem=cbapoapapschur}, this follows in the same way from Theorem \ref{thm=nonAP_high_rank} as Theorems \ref{thm=mainthmlp} and \ref{thm=mainthmlp_generalgroups}.
\end{rem}

\section{Non-coarse-embeddability of expander families} \label{sec=expanders}
\subsection{Conventions}
In what follows, we only consider real Banach spaces.

Let $G$ be a locally compact group, let $X$ be a Banach space, and let $O(X)$ denote the group of all invertible linear isometries from $X$ to $X$. In what follows, by an isometric representation of $G$ on $X$, we mean a homomorphism $\pi \colon G \to O(X)$ that is strongly continuous, i.e., the map $G \to X$ given by $g \mapsto \pi(g)x$ is continuous for every $x \in X$. It is known that strong continuity for such representations is equivalent to other forms of continuity \cite[Section 3.3]{monod} (see also \cite[Lemma 2.4]{BFGM}).

If $\pi \colon G \to B(X)$ is a strongly continuous representation of a locally compact group $G$ on a Banach space $X$ and $m$ is a compactly supported signed Borel measure on $G$, then by $\pi(m)$ we denote the operator in $B(X)$ given by
\[
  \pi(m)\xi = \int_G \pi(g)\xi dm(g) \; \textrm{(Bochner integral)} \; \textrm{for} \, \xi \in X.
\]

\subsection{Versions of property (T) relative to Banach spaces}\label{subsec=BanachT}
Let $X$ be a Banach space. As defined in \cite{BFGM}, a locally compact group $G$ has property (T$_X$) if for every isometric representation $\pi \colon G \to O(X)$, the quotient representation $\pi^{\prime} \colon G \to O(X/X^{\pi(G)})$ does not have almost invariant vectors. Here, $X^{\pi(G)}$ denotes the closed subspace of $X$ consisting of vectors that are fixed by $\pi$.

Let $\mathcal{E}$ be a class of Banach spaces, and let $G$ be a locally compact group. Consider the completion $C_{\mathcal E}(G)$ of $C_c(G)$ with respect to the norm $\sup \|\pi(f)\|_{B(X)}$, where the supremum is taken over all isometric representations of $G$ on a space $X$ in $\mathcal E$. Following \cite[Section 4]{lafforguestrengthenedpropertyt} and \cite[D\'efinition 0.4]{lafforguefastfourier}, we say that a group $G$ has property (T$_{\mathcal{E}}^{\textrm{proj}}$) if $C_{\mathcal E}(G)$ contains an idempotent $p$ such that for all isometric representations $\pi$ of $G$ on a space $X$ in $E$, the operator $\pi(p)$ is a projection on $X^{\pi(G)}$. Concretely, this is equivalent to the existence of a sequence $(m_n)_n$ of compactly supported signed measures on $G$ such that $m_n(G)=1$ for all $n \in \mathbb{N}$ and the sequence $(\pi(m_n))_n$ converges in the norm topology of $B(X)$ and uniformly in $(\pi,X)$ to a projection on $X^{\pi(G)}$. We refer to \cite[Section 2.9]{delasalle1} for the equivalence. Property (T$_{\mathcal{E}}^{\textrm{proj}}$) is what we use in order to prove nonembeddability results for expander families.

It is clear that property (T$_{\mathcal{E}}^{\textrm{proj}}$) implies property (T$_X$) for all $X \in \mathcal{E}$. The converse does not hold in general. Indeed, every group $G$ with Kazhdan's property (T) has property (T$_{L^1(G)}$) \cite{BGM}, but no infinite group has property (T$_{\mathcal{E}}^{\textrm{proj}}$) if $L^1(G) \in \mathcal{E}$. However, if $X$ is superreflexive, property (T$_X$) is equivalent with a ``non-uniform'' version of property (T$_{\mathcal{E}}^{\textrm{proj}}$), as is shown by the following result.
\begin{prop} \label{prop=comparison_T} Let $X$ be a superreflexive Banach space. If a discrete group $G$ has property (T$_X$), then for every isometric representation $\pi \colon G \to O(X)$, there is a projection onto the space $X^{\pi(G)}$ in the norm closure of
\[
  \{\pi(m) \mid m \textrm{ is a compactly supported probability measure on }G\}.
\]
If a locally compact group $G$ has property (T$_{\ell^2(X)}$), then for every isometric representation of $G$, such a projection also exists .
\end{prop}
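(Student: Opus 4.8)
The plan is to exploit the superreflexivity of $X$ (and of the spaces $X/X^{\pi(G)}$, which inherit superreflexivity) together with a mean ergodic / fixed-point argument in order to produce the desired projection as a weak limit of averages, and then upgrade weak convergence to norm convergence using an argument on the convex hull. First I would fix an isometric representation $\pi\colon G\to O(X)$. By property (T$_X$), the quotient representation $\pi'$ on $\overline X:=X/X^{\pi(G)}$ has no almost invariant vectors; since $X$ is superreflexive, so is $\overline X$, and a theorem of Alspach (or the fact that $\overline X$ is reflexive hence has the fixed-point property for the relevant averages) shows that every orbit of $\pi'$ is \emph{uniformly separated from} $0$: more precisely, the absence of almost invariant vectors plus superreflexivity gives a constant $\kappa<1$ and a compactly supported probability measure $m_0$ on $G$ (one may take $m_0$ supported on a Kazhdan-type generating set) with $\|\pi'(m_0)\|_{B(\overline X)}\le\kappa$. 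This is the Banach-space analogue, due to Bader--Furman--Gelander--Monod, of the quantitative form of property (T).

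Next I would observe that $\pi(m_0)$ fixes $X^{\pi(G)}$ pointwise (since $m_0$ is a probability measure) and that its induced operator on the quotient has norm $\le\kappa<1$. Hence, writing $T=\pi(m_0)$, the operators $\tfrac1N\sum_{j=0}^{N-1}T^j$ converge in norm on the quotient $\overline X$ to $0$. The point is to deduce convergence on $X$ itself. Here superreflexivity of $X$ enters a second time: $X=X^{\pi(G)}\oplus_{\text{top}}\ker$ need not hold, but by reflexivity the Cesàro averages $A_N=\tfrac1N\sum_{j=0}^{N-1}T^j$ are a bounded sequence of operators fixing $X^{\pi(G)}$, and for each $\xi\in X$ the sequence $A_N\xi$ has a weakly convergent subsequence; the limit is $\pi$-invariant because $A_N-TA_N\to0$, and it lies in the norm-closed convex hull of the orbit, so by uniqueness of the invariant vector in each orbit (again a consequence of property (T$_X$) applied to the subrepresentation generated by $\xi$, or of strict convexity of a renorming) the whole sequence $A_N\xi$ converges weakly to a well-defined $P\xi\in X^{\pi(G)}$. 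One checks $P$ is a (linear, norm-$\le1$) projection onto $X^{\pi(G)}$ and that $P=\lim_N A_N$ in the strong operator topology. Finally, to land inside the \emph{norm closure} of $\{\pi(m)\}$ as claimed, I would note that $A_N=\pi(m_0^{*N\text{-avg}})$ is itself $\pi$ of a compactly supported probability measure, so $P$ lies in the strong closure of that set; to get it in the norm closure one runs the argument of Section \ref{subsec=coarse_embedd}: the geometric decay $\|\pi'(m_0)\|\le\kappa$ forces $\|A_N-P\|_{B(X)}\to0$ \emph{uniformly}, because on the invariant part $A_N-P=0$ identically and on the complement $A_N$ has norm $\le\tfrac1{N(1-\kappa)}$, giving a genuine norm-Cauchy sequence of the form $\pi(m_N)$.

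For the locally compact case with property (T$_{\ell^2(X)}$): given $\pi$ on $X$, form the representation $\tilde\pi$ of $G$ on $\ell^2(X)$ (say on $\ell^2(\mathbb N;X)$, acting diagonally, or better on an $L^2$-space built from a proper action of $G$ on itself) which is isometric and has the same invariant vectors ``fibrewise''. Property (T$_{\ell^2(X)}$) supplies a probability measure $m_0$ with $\|\tilde\pi'(m_0)\|\le\kappa<1$ on the quotient of $\ell^2(X)$ by its invariants; restricting to a single fibre (a $\tilde\pi$-invariant isometrically embedded copy of $X$ need not exist, so instead one tests $\|\pi'(m_0)\|$ against vectors $\xi\otimes\delta_0$ and uses that $\|\tilde\pi'(m_0)(\xi\otimes\delta_0)\|=\|\pi'(m_0)\xi\|$) yields $\|\pi'(m_0)\|_{B(X/X^{\pi(G)})}\le\kappa$, and then the Cesàro-averaging argument above applies verbatim, the only subtlety being strong continuity of $N\mapsto\pi(m_0^{\text{avg}})$, which is automatic since $m_0$ is compactly supported.

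The main obstacle I anticipate is the passage from weak to norm convergence of the averages on all of $X$ when $X$ is only superreflexive rather than Hilbert: one cannot split $X$ orthogonally, so the decomposition $A_N-P$ must be handled by separately controlling the (zero) contribution on $X^{\pi(G)}$ and the geometrically small contribution on a complement, and justifying that such a complement — or at least the relevant quotient norm estimate — transfers back from $X/X^{\pi(G)}$ to an operator-norm bound on $X$. This is exactly where the quantitative (T$_X$) estimate $\|\pi'(m_0)\|\le\kappa<1$, rather than mere absence of almost invariant vectors, is indispensable, and it is the reason the statement is phrased for superreflexive $X$ (or for the (T$_{\ell^2(X)}$) hypothesis, which manufactures the same quantitative bound).
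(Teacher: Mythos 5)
The proposal has a genuine gap at the heart of the matter: you \emph{assert} that property (T$_X$) together with superreflexivity ``gives a constant $\kappa<1$ and a compactly supported probability measure $m_0$ with $\|\pi'(m_0)\|_{B(\overline X)}\le\kappa$'' and attribute this to Bader--Furman--Gelander--Monod, but this is precisely the non-trivial content of the proposition, not a citable black box. BFGM supply two tools (renorming to make $X$ uniformly convex and uniformly smooth, and the existence of a $\pi(G)$-invariant complement $Y$ of $X^{\pi(G)}$ --- which, incidentally, resolves the worry you flag about no topological splitting), but going from ``no almost invariant vectors on $Y$'' to ``$\|\pi(m_0)\|_{B(Y)}<1$ for some compactly supported probability $m_0$'' requires an argument. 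In the discrete case the argument is short: from $\sup_{s\in S}\|\pi(s)\xi-\xi\|\ge\varepsilon$ and uniform convexity one gets $\inf_{s\in S}\|\pi(s)\xi+\xi\|\le 2-\delta$, hence the uniform measure $m$ on $S$ satisfies $\|\mathrm{Id}+\pi(m)\|_{B(Y)}\le 2-\delta/|S|<2$, and then $m_0=\tfrac12\delta_1+\tfrac12 m$ does the job. Once this is in hand, the Cesàro averaging and weak-compactness machinery in your proposal is unnecessary --- powers $\pi(m_0^{*n})$ already converge geometrically in norm on $Y$ and are the identity on $X^{\pi(G)}$.

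Your treatment of the locally compact case also misidentifies why the hypothesis is (T$_{\ell^2(X)}$) rather than (T$_X$). You propose ``restricting to a fibre'' via vectors $\xi\otimes\delta_0$, which only shows $\|\pi'(m_0)\|\le\|\tilde\pi'(m_0)\|$; the problem is producing the measure $m_0$ in the first place, and your restriction argument gives no handle on that. When the Kazhdan set $Q$ is compact rather than finite, one cannot take a uniform measure on $Q$; one instead needs to produce a probability measure $m$ on $Q$ such that $\|\mathrm{Id}+\pi(m)\|_{B(Y)}\le 2-\delta$, and this is done by a Hahn--Banach separation argument applied to the convex hull $C\subset C(Q)$ of the functions $s\mapsto\langle \pi(s)\xi+\xi,\eta\rangle$. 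To show $\inf_Q g<2-\delta$ for \emph{every} $g\in C$ (not just the extreme points) one rewrites a convex combination $\sum\lambda_i\langle \pi(s)\xi_i+\xi_i,\eta_i\rangle$ as $\langle \pi(s)\underline\xi+\underline\xi,\underline\eta\rangle$ with $\underline\xi=(\sqrt{\lambda_i}\xi_i)\in\ell^2(Y)$, and \emph{this} is exactly where the $\ell^2(X)$ hypothesis is used. Without seeing this, the locally compact half of the proposition is not actually proved by your outline.
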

\begin{proof}
Let $\pi \colon G \to O(X)$ be an isometric representation. By \cite[Proposition 2.3]{BFGM}, we can assume that the norm on $X$ is uniformly convex and uniformly smooth. By \cite[Proposition 2.6]{BFGM}, it follows that $X^{\pi(G)}$ has a $G$-invariant closed complemented subspace $Y$. We will construct a compactly supported probability measure $m$ on $G$ such that $\|\mathrm{Id}+\pi(m)\|_{B(Y)}<2$, where $\mathrm{Id}$ denotes the identity operator on $Y$. Replacing $m$ by the measure $A \mapsto \frac{1}{2}(m(A) + m(A^{-1}))$, we can assume that $m$ is symmetric. For $n \geq 1$, define $m_n=(\frac 1 2 \delta_1 + \frac 1 2 m)^{\ast n}$, which is a compactly supported symmetric probability measure, and the sequence $\pi(m_n)$ converges to a projection onto $X^{\pi(G)}$ as $n \to \infty$.

If $G$ is discrete, by property (T$_X$) we know that $Y$ does not almost have invariant vectors, i.e., there is a finite subset $S \subset G\setminus\{1\}$ and $\varepsilon>0$ such that $\sup_{s \in S} \|s \cdot \xi - \xi\|\geq\varepsilon$ for all unit vectors $\xi \in Y$. By the uniform convexity of $X$, this implies that there is a $\delta>0$ such that $\inf_{s \in S} \| s \cdot \xi + \xi\| <2-\delta$. This implies that $\sum_{s \in S} (\mathrm{Id}+\pi(s))$ has norm less than $2|S|-\delta$ on $Y$. In other words, if $m$ is the uniform probability measure on $S$, then $\|\mathrm{Id}+\pi(m)\|_{B(Y)}<2$. 

If $G$ is locally compact, similarly by property (T$_{\ell^2(X)}$) there is $\delta>0$ and a compact subset $Q \subset G$ such that $\inf_{s \in Q} \|s \cdot\xi + \xi\|< 2-\delta$ for all unit vector $\xi \in \ell^2(Y)$. Consider the convex hull $C \subset C(Q)$ of the functions of the form $s \mapsto \langle s\cdot \xi + \xi,\eta\rangle$ for $\xi$ and $\eta$ in the unit balls of $Y$ and $Y^*$, respectively. If $g = \sum \lambda_i \langle s\cdot \xi_i + \xi_i,\eta_i\rangle \in C$, we can write $g = \langle s \cdot \underline \xi+ \underline \xi,\underline \eta\rangle$ for the unit vectors $\underline \xi=(\sqrt{\lambda_i} \xi_i)_i \in \ell^2(Y)$ and $\underline \eta=(\sqrt{\lambda_i} \eta_i)_i \in \ell^2(Y^*)$ and deduce that $\inf_Q g < 2-\delta$. Hence, by the Hahn-Banach Theorem, there is a probability measure $m$ on $Q$ such that $\int g dm \leq 2-\delta$ for all $g \in C$. This implies that $\|\mathrm{Id}+\pi(m)\|_{B(Y)}\leq 2-\delta$.
\end{proof}

\subsection{On the geometry of Banach spaces} \label{subsec=geometrybanachspaces} We now give some background on condition \eqref{eq=d_kX_grows_slowly}. We refer to \cite{tomczakBook} for more information. Then we derive some consequences of \eqref{eq=d_kX_grows_slowly} and Proposition \ref{prop=Schatten_class_estimates}.

Two Banach spaces $X,Y$ are said to be $C$-isomorphic if there is an isomorphism $u\colon X \rightarrow Y$ such that $\|u\| \|u^{-1}\|\leq C$. The infimum of such $C$ is called the Banach-Mazur distance between $X$ and $Y$, also called the isomorphism constant from $X$ to $Y$, and is denoted by $d(X,Y)$. It is known that if $X$ has dimension $k$, then $d(X,\ell^2_k) \leq k^{\frac{1}{2}}$ (indeed, this follows by considering \emph{John's ellipsoid}). We have equality for $X=\ell^1_k$, i.e., $d(\ell^1_k,\ell^2_k) = k^{\frac{1}{2}}$ for all $k \geq 1$.

For a Banach space $X$, put $d_k(X) = \sup \{ d(E,\ell^2_{\dim E}) \mid E\subset X,\,\dim E\leq k\}$. From the reminders above, the inequality $d_k(X)\leq k^{\frac{1}{2}}$ holds for every Banach space $X$ and every $k$, and if $\ell^1$ is finitely representable in $X$, i.e., $X$ contains subspaces $(1+\varepsilon)$-isomorphic to $\ell^1_k$ for every $\varepsilon>0$ and every $k$, then $d_k(X) = k^{\frac{1}{2}}$. Milman and Wolfson \cite{milmanwolfson} proved the converse: if $\limsup_k d_k(X)k^{-\frac{1}{2}} >0$, then $\ell^1$ is finitely representable in $X$. A consequence is that if $d_k(X) < k^{\frac{1}{2}}$ for some $k$, then $\lim_k d_k(X)k^{-\frac{1}{2}}=0$.

The Banach spaces in which $\ell^1$ is not finitely representable were historically called $B$-convex spaces and have been extensively studied. A superreflexive space, i.e., a Banach space $X$ such that every Banach space finitely representable in $X$ is reflexive, is clearly $B$-convex. In fact, a Banach space $X$ is $B$-convex if and only if it has type $>1$  if and only if it is $K$-convex if and only if $\lim_k d_k(X)k^{-\frac{1}{2}}=0$. The rate of convergence to $0$ of this quantity is not yet completely understood. It is known that it converges to $0$ at least as fast as a power of $\log k$, and it is a well-known open problem whether it converges as a power of $k$, as in \eqref{eq=d_kX_grows_slowly} (see \cite[Problem 27.6]{tomczakBook}). Also, it is known that \eqref{eq=d_kX_grows_slowly} holds if $X$ has type $p$ and cotype $q$ satisfying $\frac 1 p - \frac 1 q<\frac 1 2$, which is particularly the case if $X$ has type $2$.

Following \cite{pisierSAF14}, for a Banach space $X$ and a $k \in \mathbb{N}$, one sets
\[ e_k(X) = \sup  \|u \otimes \mathrm{Id}_X\|_{B(\ell^2(X))},\]
where the supremum is taken over all linear maps $u\colon \ell^2 \rightarrow \ell^2$ of norm $1$ and rank $k$. We will need a result by Pietsch, asserting that for a Banach space $X$ and $\beta \leq \frac{1}{2}$, we have $\sup_k d_k(X)k^{-\beta}<\infty$ if and only if $\sup_k e_k(X)k^{-\beta} < \infty$ \cite{pietsch}. In fact, a closer relationship exists between the numbers $e_k(X)$ and $d_k(X)$:
\begin{equation}\label{eq=en_and_BMdistance}
d_k(X) \leq e_k(X) \leq 2 d_k(X).
\end{equation}
The first inequality is classical (see for example \cite[\S 27]{tomczakBook}), whereas the second inequality has been long known to Pisier as a consequence of \cite{tomczak}. With his kind permission, we include a proof here.
\begin{thm}[Pisier] \label{thm=pisier}
Let $X$ be a real Banach space. For every $k \in \mathbb{N}$, 
\[ e_k(X) \leq 2 \sup_{a\colon \ell^2_k \rightarrow \ell^2_k, \|a\| \leq 1} \| a \otimes Id_X\|_{\ell^2_k(X) \rightarrow \ell^2_k(X)}.\]
In particular, $e_k(X) \leq 2 d_k(X)$ for all $k \in \mathbb{N}$.
\end{thm}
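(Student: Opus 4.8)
The plan is to pass to a finite-dimensional, dual formulation of $e_k$ and then invoke a ``few vectors'' argument of Tomczak--Jaegermann, the constant $2$ being the price of replacing an arbitrary number of vectors by $k$ of them. \emph{Reduction and dualization.} First I would observe that a rank-$k$ operator on $\ell^2$ is supported on a finite-dimensional subspace and that $\|u\otimes\mathrm{Id}_X\|_{B(\ell^2(X))}$ is unchanged when the ambient space is enlarged; hence it suffices to bound $\|u\otimes\mathrm{Id}_X\|_{B(\ell^2_N(X))}$ for $u\colon\ell^2_N\to\ell^2_N$ with $\|u\|\le1$ and $\mathrm{rank}(u)\le k$. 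For $\mathbf x=(x_n)_n\in\ell^2_N(X)$ and $\boldsymbol\xi=(\xi_n)_n\in\ell^2_N(X^*)$ one computes $\langle(u\otimes\mathrm{Id}_X)\mathbf x,\boldsymbol\xi\rangle=\Tr(uA)$, where $A$ is the $N\times N$ matrix with entries $A_{mn}=\langle x_m,\xi_n\rangle$. Taking the supremum over all such $u$ (von Neumann's trace inequality) identifies the amplification norm with a Ky Fan $k$-norm, so that
\[ e_k(X)=\sup\Bigl\{\textstyle\sum_{i=1}^k s_i(A)\ :\ A=(\langle x_m,\xi_n\rangle)_{m,n\le N},\ \|\mathbf x\|_{\ell^2_N(X)}\le1,\ \|\boldsymbol\xi\|_{\ell^2_N(X^*)}\le1,\ N\ge1\Bigr\}. \]
Running the same computation with $N=k$ (where the rank bound is automatic, so $\sum_{i\le k}s_i(A)=\|A\|_{S^1}$) shows that $\sup_{a\colon\ell^2_k\to\ell^2_k,\ \|a\|\le1}\|a\otimes\mathrm{Id}_X\|_{\ell^2_k(X)}$ is precisely the same supremum restricted to $N=k$.

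\emph{Compression to $k$ vectors.} Given an admissible $(\mathbf x,\boldsymbol\xi)$, let $v_1,\dots,v_k$ and $w_1,\dots,w_k$ be orthonormal systems in $\R^N$ spanning the leading right, resp.\ left, singular subspaces of $A$, and set $\tilde x_j=\sum_l(w_j)_l x_l\in X$, $\tilde\xi_i=\sum_l(v_i)_l\xi_l\in X^*$. A direct computation then gives $\sum_{i\le k}s_i(A)=\|B\|_{S^1}$ for the $k\times k$ matrix $B=(\langle\tilde x_j,\tilde\xi_i\rangle)_{j,i}$, where each $\|\tilde x_j\|\le1$ and $\|\tilde\xi_i\|\le1$. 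Thus the theorem reduces to: whenever $\tilde x_j,\tilde\xi_i$ arise in this way, $\|B\|_{S^1}\le 2\sup_{\|a\|\le1}\|a\otimes\mathrm{Id}_X\|_{\ell^2_k(X)}$.

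\emph{The factor $2$ and the main obstacle.} If one had $\sum_j\|\tilde x_j\|^2\le1$ and $\sum_i\|\tilde\xi_i\|^2\le1$, this would follow with constant $1$ directly from the $N=k$ description of $\sup_a\|a\otimes\mathrm{Id}_X\|$ above. The crux is that in a non-Hilbertian $X$ the amplification $W^\top\otimes\mathrm{Id}_X$ of the transpose of an isometry $W\colon\ell^2_k\hookrightarrow\ell^2_N$ need not be a contraction of the $X$-valued $\ell^2$-spaces, so $\sum_j\|\tilde x_j\|^2$ (and likewise $\sum_i\|\tilde\xi_i\|^2$) can be as large as $k$. Here I would invoke Tomczak--Jaegermann's device (the content of \cite{tomczak}) for evaluating such quantities with few vectors: after splitting the configuration suitably and renormalizing the $(\le k)$-dimensional subspace of $X$ carrying the relevant vectors, one recovers the estimate with only a single factor $2$. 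This last step is what I expect to be the heart of the proof, and it is precisely why $e_k(X)$ is genuinely larger than $\sup_a\|a\otimes\mathrm{Id}_X\|$ in general: a naive factorization of $u$ through $\ell^2_k$ only gives $e_k(X)\le d_k(X)^2$ (so, combined with the routine inequality $\sup_a\|a\otimes\mathrm{Id}_X\|_{\ell^2_k(X)}\le d_k(X)$, this would be far worse than $2d_k(X)$); the gain comes from not treating the domain defect and the range defect of $u$ separately.
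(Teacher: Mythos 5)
Your first reduction is fine and is essentially the same as the paper's: replacing $u$ of rank $k$ and norm $\le 1$ by $u = b^* c$ with $b,c\colon \ell^2\to\ell^2_k$ contractions, and then dualizing, gives exactly the identity $e_k(X)=\sup_{u,v}\mathrm{tr}(uv^*)$ where $u$ and $v$ run over the ``$2$-summed-through-$\ell^2$'' maps $X\to\ell^2_k$ and $X^*\to\ell^2_k$. Your Ky Fan phrasing is equivalent.

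The gap is in the step you defer. Your SVD compression is not the right reduction, and it does not put you in a position to invoke Tomczak--Jaegermann. After compressing along the top $k$ singular directions you record only that $\|\tilde x_j\|\le 1$ for each $j$ (in fact you get the stronger weak-$\ell^2$ bound, i.e.\ the map $V\colon\ell^2_k\to X$, $Ve_j=\tilde x_j$, is a contraction — but you don't note that). Neither statement is the hypothesis of Tomczak--Jaegermann's theorem, and neither is what enters the $N=k$ description of $\sup_a\|a\otimes\mathrm{Id}_X\|$, which requires the strong $\ell^2$ bound $\sum_j\|\tilde x_j\|^2\le 1$. The distance from ``$\|V\|\le1$'' to ``$\sum_j\|\tilde x_j\|^2\le1$'' is an uncontrolled factor of order $k$, so the reduction you wrote cannot, by itself, be repaired by a constant. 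Your description of the missing step (``renormalizing the $(\le k)$-dimensional subspace of $X$ carrying the relevant vectors'') is a Banach--Mazur-type move that would put back a factor $d_k(X)$, not $2$, so it cannot be what Pisier does.

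What the paper actually does: it identifies the two sides with the sets $C_k(X)$ (maps $X\to\ell^2_k$ with $\pi_2$-norm $<1$, i.e.\ factored through $\ell^2$) and $D_k(X)$ (same, but factored through $\ell^2_k$), shows by polar duality that the inclusion $C_k(X)\subset\sqrt2\,\mathrm{conv}(D_k(X))$ is equivalent to the inequality $\pi_2(v)\le\sqrt2\,\pi_2^{(k)}(v)$ for rank-$\le k$ maps $v\colon\ell^2_k\to X$, and then quotes precisely that inequality (this is the content of \cite{tomczak}, cf.\ \cite[Theorem 18.4]{tomczakBook}). The $\sqrt2$ arises once on each side ($X$ and $X^*$), giving the constant $2$. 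So the essential content is the comparison $\pi_2$ vs.\ $\pi_2^{(k)}$, not a passage from a max-norm bound to an $\ell^2$-sum bound; your set-up never arrives at that comparison, and the step you flag as ``the heart of the proof'' is genuinely missing.
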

\begin{proof} We can assume that $X$ is finite-dimensional. Let $C_k(X)$ denote the set of operators $u\colon X \rightarrow \ell_k^2$ of the form
\[ u(x) = \sum_{i=1}^\infty \xi_i(x) b e_i,\]
where the $\xi_i \in X^*$ satisfy $\sum_{i=1}^{\infty} \|\xi_i\|^2 < 1$, the vectors $e_i$ denote the standard orthonormal basis vectors of $\ell^2$, and $\|b\colon \ell^2 \rightarrow \ell^2_k\|< 1$ (actually it follows from the self-duality of the $2$-summing norm \cite[Proposition 9.10]{tomczakBook} that $C_k(X)$ is the set of operators $u \colon X \rightarrow \ell_k^2$ of $2$-summing norm less than $1$). Since every operator $a\colon \ell^2 \rightarrow \ell^2$ of norm $1$ and rank $k$ can be written as $a= b^* c$ for operators $b,c\colon \ell^2 \rightarrow \ell^2_k$ of norm $1$, we have
\begin{eqnarray*} e_k(X)&=& \sup_{\|\xi\|_{\ell^2(X)} < 1,\|\eta\|_{\ell^2(X^*)}< 1, \|b,c\colon \ell^2 \rightarrow \ell^2_k\| < 1} \langle (c \otimes Id_X)(\eta),(b \otimes Id_{X^*})(\xi)\rangle\\ &=& \sup_{u\in C_k(X), v \in C_k(X^*)} tr( u v^*).\end{eqnarray*}
Similarly, let $D_k(X)$ denote the set of operators $u\colon X \rightarrow \ell_k^2$ of the form
\[ u(x) = \sum_{i=1}^k \xi_i(x) b e_i,\]
where the $\xi_i \in X^*$ satisfy $\sum_{i=1}^k \|\xi_i\|^2 < 1$, the vectors $e_i$ denote the standard orthonormal basis vectors of $\ell_k^2$, and $\|b\colon \ell^2_k \rightarrow \ell^2_k\|< 1$. It follows that
\[ \sup_{a\colon \ell^2_k \rightarrow \ell^2_k, \|a\| \leq 1} \| a \otimes Id_X\|_{\ell^2_k(X) \rightarrow \ell^2_k(X)} = \sup_{u\in D_k(X), v \in D_k(X^*)} tr( u v^*).\]

We claim that $C_k(X) \subset \sqrt 2 \textrm{conv}(D_k(X))$. The claim clearly implies the theorem, and it is proved by duality. The polar of $C_k(X)$ coincides, with respect to the duality $\langle u,v\rangle =tr(uv)$, with the operators $v\colon \ell^2_k \rightarrow X$ such that 
\[\pi_2(v) = \sup_{\|b\colon \ell^2 \rightarrow \ell^2_k\|\leq 1}\left(\sum_i \|u b e_i\|^2\right)^{\frac{1}{2}}\leq 1.\]
Similarly, the polar of $D_k(X)$ is the set of operators $v\colon \ell^2_k \rightarrow X$ such that 
\[\pi_2^{(k)}(v) = \sup_{\|b\colon \ell^2_k \rightarrow \ell^2_k\|\leq 1}\left(\sum_{i=1}^k \|u b e_i\|^2\right)^{\frac{1}{2}}\leq 1.\]
The claim therefore follows from \cite[Theorem 18.4]{tomczakBook}, in which the inequality $\pi_2(v) \leq \sqrt 2\pi_2^{(k)}(v)$ is proved for every rank $k$ linear map.
\end{proof}
The following result is essentially \cite[Proposition 3.3]{delasalle1}. In fact, it is what its proof actually shows.
\begin{prop}\label{prop=link_TX_Schatten_class} Let $X$ be a Banach space such that $\sup_k e_k(X) k^{-\beta}\leq C' < \infty$. Then for every $p< \beta^{-1}$, there is a constant $C_p(X)$ (depending on $C'$, $p$ and $\beta$) such that 
\[\| T \otimes \mathrm{Id}_X\|_{B(L^2(\Omega;X))} \leq C_p(X) \| T\|_{S^p(L^2(\Omega))}\]
for every measure space $(\Omega,\mu)$ and every operator $T\colon L^2(\Omega) \rightarrow L^2(\Omega)$ belonging to the Schatten class $S^p$.
\end{prop}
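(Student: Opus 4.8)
The plan is to reduce the estimate to the case of a finite-rank operator $T$ and then to interpolate between the trivial bound for rank-one (or more generally $S^2$) operators and the bound for operators of controlled rank coming from the definition of $e_k(X)$. First I would reduce to $T$ of finite rank: by the density of finite-rank operators in $S^p$ for $p<\infty$ and the fact that $T\otimes\mathrm{Id}_X$ depends continuously (in $B(L^2(\Omega;X))$-norm) on $T\in S^p$ — because a rank-$k$ operator $u$ has $\|u\otimes\mathrm{Id}_X\|\le e_k(X)\|u\|\le C'k^\beta\|u\|$, and rank is lower semicontinuous — it suffices to prove the claimed inequality for finite-rank $T$, possibly after increasing $C_p(X)$. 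For finite-rank $T$ we may moreover assume $\Omega$ finite, so that $L^2(\Omega)=\ell^2_N$ and everything is literally finite-dimensional; then $\|T\otimes\mathrm{Id}_X\|_{B(\ell^2_N(X))}$ makes sense with the full tensor-norm machinery available.

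The core step is the following dyadic decomposition of $T$. Write the singular value decomposition $T=\sum_{j\ge 1} s_j\, \langle\cdot,e_j\rangle f_j$ with $s_1\ge s_2\ge\cdots\ge 0$, and group the indices into blocks $B_l=\{2^{l-1}\le j<2^l\}$ for $l\ge 1$, setting $T_l=\sum_{j\in B_l}s_j\langle\cdot,e_j\rangle f_j$. Each $T_l$ has rank at most $2^{l-1}$ and operator norm $\|T_l\|=s_{2^{l-1}}$, and since $T$ is in $S^p$ we have $s_{2^{l-1}}^p\le \frac{1}{2^{l-1}}\sum_{j\le 2^{l-1}}s_j^p\le 2^{-(l-1)}\|T\|_{S^p}^p$, hence $\|T_l\|\le 2^{-(l-1)/p}\|T\|_{S^p}$. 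Therefore
\[
\|T\otimes\mathrm{Id}_X\|\le\sum_{l\ge 1}\|T_l\otimes\mathrm{Id}_X\|\le\sum_{l\ge 1}e_{2^{l-1}}(X)\,\|T_l\|\le C'\sum_{l\ge 1}2^{(l-1)\beta}\,2^{-(l-1)/p}\,\|T\|_{S^p}.
\]
The geometric series $\sum_{l\ge 1}2^{(l-1)(\beta-1/p)}$ converges precisely because $\beta-1/p<0$, i.e. $p<\beta^{-1}$, and its sum depends only on $p$ and $\beta$; calling the resulting constant $C_p(X):=C'\sum_{l\ge 1}2^{(l-1)(\beta-1/p)}$ finishes the finite-rank case, and by the first paragraph the general case.

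The main obstacle, and the only genuinely nontrivial input, is the inequality $\|u\otimes\mathrm{Id}_X\|_{B(\ell^2(X))}\le e_k(X)\|u\|$ for operators $u$ of rank at most $k$, which is simply the definition of $e_k(X)$ together with the normalization by $\|u\|$; everything else is the Markov-type tail estimate on singular values and summing a geometric series. One should be slightly careful that the bound $e_k(X)\le C'k^\beta$ is used with $k=2^{l-1}$ rather than with the exact rank of $T_l$, which is harmless since $e_k(X)$ is non-decreasing in $k$. It is also worth noting that the hypothesis is stated in terms of $e_k(X)$ rather than $d_k(X)$; by Theorem \ref{thm=pisier} (or Pietsch's result quoted just before it) the two are equivalent up to constants, so a Banach space satisfying \eqref{eq=d_kX_grows_slowly} automatically satisfies the hypothesis of the proposition for a suitable $C'$, and this is the form in which the proposition will be applied in the sequel.
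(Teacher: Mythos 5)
Your dyadic decomposition --- blocking the singular values into groups of size $2^{l-1}$, bounding $\|T_l\| = s_{2^{l-1}} \leq 2^{-(l-1)/p}\|T\|_{S^p}$ by the tail estimate, and summing the resulting geometric series $\sum_l 2^{(l-1)(\beta - 1/p)}$ --- is exactly the standard argument, and is what the paper refers to when it defers the proof to \cite[Proposition 3.3]{delasalle1}; the paper does not reprove the statement.

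There is one genuine gap: the assertion ``for finite-rank $T$ we may moreover assume $\Omega$ finite'' is stated without justification, and it is the one step that requires care. The quantity $e_k(X)$ is defined in terms of operators on $\ell^2$ and the space $\ell^2(X)$, whereas the proposition concerns a general Bochner space $L^2(\Omega;X)$. A Hilbert-space isometry $L^2(\Omega) \to \ell^2_N$ does \emph{not} in general induce an isometry, or even a bounded map, between the corresponding Bochner spaces: $L^2(\Omega;X)$ depends on the measure structure of $(\Omega,\mu)$ and not merely on $L^2(\Omega)$ as an abstract Hilbert space (this is precisely what goes wrong for, say, the Riesz projection on $L^2(\mathbb{T};X)$ when $X$ is not UMD, so it cannot be waved away). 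The correct reduction goes through conditional expectations: let $\Sigma_0$ be the $\sigma$-algebra generated by the singular vectors of the finite-rank operator $T$, so that $T = E_0 T E_0$ where $E_0$ is the conditional expectation onto $\Sigma_0$; since $\|E_0 \otimes \mathrm{Id}_X\|_{B(L^2(\Omega;X))} \leq 1$, this reduces to the countably generated case. Then approximate $\Sigma_0$ by an increasing sequence of finite subalgebras $\Sigma_n$, note that $E_n T E_n$ acts on $L^2(\Sigma_n;X) \cong \ell^2_{|\Sigma_n|}(X)$ where the bound $\|u\otimes\mathrm{Id}_X\| \leq e_k(X)\|u\|$ applies literally, and conclude by strong convergence $E_n \otimes \mathrm{Id}_X \to \mathrm{Id}$ (the martingale convergence theorem in Bochner spaces). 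With that filled in, your argument is complete. A minor additional remark: your parenthetical justification of the continuity of $T \mapsto T\otimes\mathrm{Id}_X$ on $S^p$, invoking lower semicontinuity of rank, is not the right reason and is somewhat circular; what you need is simply that finite-rank operators are dense in $S^p$ together with the a priori estimate you are in the process of proving on that dense subspace.
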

As a consequence, we obtain the following result.
\begin{lemma}\label{lemma=estimates_Tdelta_X}
Let $n \geq 3$, and let $X$ be a Banach space for which there exist $C'>0$ and $\beta < \frac 1 2(1- \frac{1}{n-1})$ such that $d_k(X) \leq C' k^\beta$ for all $k$. Then there exist $C_X \in \R$ and $\alpha_X>0$ such that for all $\delta,\delta' \in [-1,1]$,
\begin{equation}\label{eq=ine_Tdelta_X} \| (T_{\delta} - T_{\delta'})\otimes \mathrm{Id}_X\|_{B(L^2(\mathrm{SO}(n);X))} \leq C_X \max(|\delta|^{\alpha_X}, |\delta'|^{\alpha_X}).\end{equation}
Moreover, $C_X$ and $\alpha_X$ depend on $C'$ and $\beta$ only.
\end{lemma}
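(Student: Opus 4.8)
The plan is to deduce the lemma by combining Proposition~\ref{prop=Schatten_class_estimates} with Proposition~\ref{prop=link_TX_Schatten_class}; the only substantive point is that the hypothesis on $\beta$ is tailored to make an admissible exponent $p$ available. I would begin by converting the assumption $d_k(X)\le C'k^\beta$ into $e_k(X)\le 2C'k^\beta$ for all $k$ via the inequality $e_k(X)\le 2d_k(X)$ from \eqref{eq=en_and_BMdistance}, so that $\sup_k e_k(X)k^{-\beta}\le 2C'<\infty$. Next I would record the identity $\tfrac12\bigl(1-\tfrac1{n-1}\bigr)=\tfrac{n-2}{2(n-1)}=\bigl(2+\tfrac2{n-2}\bigr)^{-1}$, which shows that the hypothesis $\beta<\tfrac12(1-\tfrac1{n-1})$ is exactly $\beta^{-1}>2+\tfrac2{n-2}$; hence the open interval $\bigl(2+\tfrac2{n-2},\ \beta^{-1}\bigr)$ is nonempty, and I fix an exponent $p$ in it (a choice depending on $\beta$ alone is possible). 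For that $p$, Proposition~\ref{prop=link_TX_Schatten_class}, applied with its ``$C'$'' taken to be $2C'$, yields a constant $C_p(X)$ depending only on $C'$, $\beta$ and $p$ such that $\|T\otimes\mathrm{Id}_X\|_{B(L^2(\Omega;X))}\le C_p(X)\|T\|_{S^p(L^2(\Omega))}$ for every measure space $\Omega$ and every $T\in S^p$.

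Next I would feed in $T=T_\delta-T_{\delta'}$. Writing $T_\delta$ on $L^2(\mathrm{SO}(n))$ as in \eqref{eq=def_Tdelta}, it equals $P\lambda(g)P$ for any $g$ with $g_{11}=\delta$, where $P$ is the orthogonal projection onto the left $\mathrm{SO}(n-1)$-invariant functions; this range is isometrically $L^2(\sphere^{n-1})$, on which $T_\delta$ acts as the sphere operator of Proposition~\ref{prop=Schatten_class_estimates}, while $T_\delta$ kills the orthogonal complement. Consequently $(T_\delta-T_{\delta'})\otimes\mathrm{Id}_X$ on $L^2(\mathrm{SO}(n);X)$ is supported on the subspace $L^2(\sphere^{n-1};X)$, on which it is the corresponding sphere operator, so its norm there and the Schatten $p$-norm of $T_\delta-T_{\delta'}$ equal the sphere quantities. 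For $|\delta|,|\delta'|\le\tfrac12$, Proposition~\ref{prop=Schatten_class_estimates} gives $\|T_\delta-T_{\delta'}\|_{S^p}\le C_p(|\delta|^{\alpha_p}+|\delta'|^{\alpha_p})\le 2C_p\max(|\delta|^{\alpha_p},|\delta'|^{\alpha_p})$, and combining with the previous paragraph (now applied with $\Omega=\sphere^{n-1}$) proves \eqref{eq=ine_Tdelta_X} on this range with $\alpha_X:=\alpha_p$ and $C_X:=2\,C_p(X)\,C_p$.

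Finally I would handle the remaining range $\max(|\delta|,|\delta'|)>\tfrac12$, which Proposition~\ref{prop=Schatten_class_estimates} does not cover (indeed $T_{\pm1}\notin S^p$). Here only a crude bound is needed: $T_\delta$ is an average of the left translations $\lambda(ugu')$, hence $T_\delta\otimes\mathrm{Id}_X$ is an average of isometries of $L^2(\mathrm{SO}(n);X)$ and a contraction, so $\|(T_\delta-T_{\delta'})\otimes\mathrm{Id}_X\|\le 2$; since $\max(|\delta|^{\alpha_p},|\delta'|^{\alpha_p})\ge 2^{-\alpha_p}$ on this range, \eqref{eq=ine_Tdelta_X} holds there too with the same $\alpha_X=\alpha_p$ after enlarging $C_X$ (e.g.\ to $\max(2C_p(X)C_p,2^{1+\alpha_p})$; in fact $2C_p(X)C_p$ already suffices). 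For the dependency statement, $\alpha_p$ and $C_p$ come from Proposition~\ref{prop=Schatten_class_estimates} and, for a fixed admissible $p$, can be taken uniform in $n$ (a point best checked directly from that proof), while $C_p(X)$ depends only on $C'$, $\beta$, $p$ and $p$ is a function of $\beta$, so $C_X$ and $\alpha_X$ depend only on $C'$ and $\beta$. The only parts requiring attention — the rest being one application of each proposition plus the triangle inequality — are this constant bookkeeping and the separate treatment of $\max(|\delta|,|\delta'|)>\tfrac12$.
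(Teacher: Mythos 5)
Your argument follows the paper's proof exactly: convert $d_k$ to $e_k$ via \eqref{eq=en_and_BMdistance}, choose $p$ in the nonempty interval $\bigl(2+\tfrac{2}{n-2},\,\beta^{-1}\bigr)$, combine Propositions~\ref{prop=Schatten_class_estimates} and~\ref{prop=link_TX_Schatten_class}, and handle $\max(|\delta|,|\delta'|)>\tfrac12$ by the trivial contraction bound. The only difference is that you spell out the identification of $T_\delta$ on $L^2(\mathrm{SO}(n))$ with the sphere operator (which the paper leaves implicit); note, though, that your parenthetical claim that $C_p,\alpha_p$ from Proposition~\ref{prop=Schatten_class_estimates} "can be taken uniform in $n$" is neither true nor needed, since $n$ is a fixed parameter of the lemma and the dependence to control is only on $X$ through $C'$ and $\beta$.
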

\begin{proof}
By the triangle inequality, we can assume that $\delta'=0$, and we can assume that $\delta \in [-\frac{1}{2},\frac{1}{2}]$ (for $|\delta|\geq \frac{1}{2}$, use that $\|T_\delta\otimes \mathrm{Id}_X\|_{B(L^2(\mathrm{SO}(n);X))} \leq 1$). 

Our assumption on $X$ means (recall \eqref{eq=en_and_BMdistance}) that there is some $\varepsilon>0$ such that $\sup_k e_k(X)  k^{\varepsilon - \frac{1}{q}}<\infty$, where $q=2+\frac{2}{n-2}$. Pick $p>q$ such that $\varepsilon - \frac{1}{q}>-\frac{1}{p}$. Then \eqref{eq=ine_Tdelta_X} follows from Proposition \ref{prop=Schatten_class_estimates} and Proposition \ref{prop=link_TX_Schatten_class}.
\end{proof}
\begin{rem} Since \eqref{eq=ine_Tdelta_X} behaves well with respect to complex interpolation (see for example \cite[Lemma 3.1]{delasalle1}), Lemma \ref{lemma=estimates_Tdelta_X} also holds if $X$ is isomorphic to a complex-interpolation space $[X_0,X_1]_\theta$ for some $0<\theta<1$, where $X_0$ is a space as in the lemma and $X_1$ is an arbitrary Banach space. It might be that the spaces obtained in this way for a fixed $n$ are all the spaces satisfying \eqref{eq=d_kX_grows_slowly}.
\end{rem}
Now we combine Lemma \ref{lemma=estimates_Tdelta_X} with Fell's absorption principle for isometric representations of compact groups on Banach spaces (see \cite[Proposition 2.7]{delasalle1}), asserting that if $\pi \colon K \to O(X)$ is an isometric representation of a compact group $K$ on a Banach space $X$, then for every signed Borel measure $m$ on $K$, we have $\|\pi(m)\|_{B(X)} \leq \|\lambda(m)\|_{B(L^2(K;X))}$. Using the explicit expression for $T_{\delta}$, this implies the following result.
\begin{lemma}\label{lemma=coefficients_Banach_rep_SOn} Let $n \geq 3$, and let $X$ be a Banach space satisfying \eqref{eq=ine_Tdelta_X} for some $C_X$, $\alpha_X>0$ and all $\delta,\delta' \in [-1,1]$. Then for every linear isometric representation $\pi$ of $\mathrm{SO}(n)$ on $X$ and all $\mathrm{SO}(n-1)$-invariant unit vectors $\xi\in X$ and $\eta \in X^*$, the function $\varphi(g) = \langle \pi(g) \xi,\eta\rangle$ satisfies
\[ |\varphi(g) - \varphi(g')| \leq C_X \max(|g_{1,1}|^{\alpha_X}, |g'_{1,1}|^{\alpha_X}) \|\xi\|_X \|\eta\|_{X^*}\]
for all $g,g' \in \mathrm{SO}(n)$.
\end{lemma}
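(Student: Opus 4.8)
The plan is to realize the coefficient $\varphi(g) = \langle \pi(g)\xi,\eta\rangle$ through the averaging operators $T_\delta$ and then combine the Banach-space estimate \eqref{eq=ine_Tdelta_X} with Fell's absorption principle. Fix $g,g' \in \mathrm{SO}(n)$ and put $\delta = g_{1,1}$, $\delta' = g'_{1,1}$. Let $\mu_g$ be the image of the normalized Haar measure of $\mathrm{SO}(n-1)\times\mathrm{SO}(n-1)$ under $(u,u')\mapsto ugu'$; this is an $\mathrm{SO}(n-1)$-bi-invariant probability measure on $\mathrm{SO}(n)$, and by the very definition \eqref{eq=def_Tdelta} one has $\lambda(\mu_g)=T_\delta$ on $L^2(\mathrm{SO}(n))$. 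Similarly $\lambda(\mu_{g'})=T_{\delta'}$. Set $m=\mu_g-\mu_{g'}$, a signed Borel measure on $\mathrm{SO}(n)$ of total variation at most $2$.

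Since the left regular representation $\lambda$ acts only on the $\mathrm{SO}(n)$-variable, the operator $\lambda(m)$ on $L^2(\mathrm{SO}(n);X)$ is exactly $(T_\delta-T_{\delta'})\otimes\mathrm{Id}_X$, so \eqref{eq=ine_Tdelta_X} gives
\[ \|\lambda(m)\|_{B(L^2(\mathrm{SO}(n);X))} = \|(T_\delta-T_{\delta'})\otimes\mathrm{Id}_X\|_{B(L^2(\mathrm{SO}(n);X))} \leq C_X \max(|\delta|^{\alpha_X},|\delta'|^{\alpha_X}). \]
Applying Fell's absorption principle for isometric representations of compact groups (\cite[Proposition 2.7]{delasalle1}) to $\mathrm{SO}(n)$, the representation $\pi$, and the measure $m$, we obtain $\|\pi(m)\|_{B(X)} \leq \|\lambda(m)\|_{B(L^2(\mathrm{SO}(n);X))} \leq C_X \max(|\delta|^{\alpha_X},|\delta'|^{\alpha_X})$.

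It remains to identify $\langle \pi(m)\xi,\eta\rangle$ with $\varphi(g)-\varphi(g')$. Since $\pi$ is a homomorphism, $\pi(ugu')=\pi(u)\pi(g)\pi(u')$, and Fubini for the Bochner integral gives
\[ \langle \pi(\mu_g)\xi,\eta\rangle = \int_{\mathrm{SO}(n-1)\times\mathrm{SO}(n-1)} \langle \pi(g)\pi(u')\xi,\pi(u)^*\eta\rangle\, du\, du'. \]
By the $\mathrm{SO}(n-1)$-invariance of $\xi$ and $\eta$ the integrand equals the constant $\langle\pi(g)\xi,\eta\rangle=\varphi(g)$, so $\langle\pi(\mu_g)\xi,\eta\rangle=\varphi(g)$, and likewise $\langle\pi(\mu_{g'})\xi,\eta\rangle=\varphi(g')$. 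Hence $\varphi(g)-\varphi(g')=\langle\pi(m)\xi,\eta\rangle$ and
\[ |\varphi(g)-\varphi(g')| \leq \|\pi(m)\|_{B(X)}\|\xi\|_X\|\eta\|_{X^*} \leq C_X\max(|g_{1,1}|^{\alpha_X},|g'_{1,1}|^{\alpha_X})\|\xi\|_X\|\eta\|_{X^*}, \]
which is the assertion. The argument is routine once \eqref{eq=ine_Tdelta_X} and the cited absorption principle are in hand; the only steps needing a little care are the identification of $\lambda(m)$ on $L^2(\mathrm{SO}(n);X)$ with $(T_\delta-T_{\delta'})\otimes\mathrm{Id}_X$, so that \eqref{eq=ine_Tdelta_X} applies verbatim, and the Fubini collapse of the double integral, so I do not anticipate a genuine obstacle.
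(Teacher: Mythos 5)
Your proof is correct and takes exactly the route the paper intends: it combines the estimate \eqref{eq=ine_Tdelta_X} from Lemma \ref{lemma=estimates_Tdelta_X}, the identification $\lambda(\mu_g)=T_{g_{1,1}}$ coming from the explicit expression \eqref{eq=def_Tdelta}, and Fell's absorption principle from \cite[Proposition 2.7]{delasalle1}. The paper only indicates this in a single sentence preceding the lemma; your write-up supplies the routine details (the bi-invariant averaging measure $\mu_g$, the Fubini collapse using $\mathrm{SO}(n-1)$-invariance of $\xi$ and $\eta$) correctly.
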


\subsection{Explicit behaviour of coefficients}
Proposition \ref{prop=invariant_multiplier_estimate} followed from Lemma \ref{lemma=mutlipliers_SOn} with a proof of combinatorial nature. The only property of $S^p$-multipliers that was used is the following: 
if $\varphi\colon G \rightarrow \C$ is an $S^p$-multiplier on a locally compact group $G$ and $H$ is a closed subgroup of $G$ and $g,g' \in G$, then the function on $H$ given by $h \mapsto \varphi(ghg')$ is an $S^p$-multiplier of the group $H$ with norm not greater than $\|\varphi\|_{M(S^p)}$. The same property holds if, for a given Banach space $X$, the word \emph{$S^p$-multiplier} is replaced by \emph{coefficient of a continuous isometric representation on $X$} and $\|\varphi\|_{M(S^p)}$ is replaced by the norm equal to the infimum of $\|\xi\|_X \|\eta\|_{X^*}$ over all strongly continuous isometric representations $\pi$ of $G$ on $X$ and all vectors $\xi$ and $\eta$ such that $\varphi(g)=\langle \pi(g) \xi,\eta\rangle$. Therefore, we can deduce the following proposition from Lemma \ref{lemma=coefficients_Banach_rep_SOn} with exactly the same proof as for Proposition \ref{prop=invariant_multiplier_estimate}.
\begin{prop} \label{prop=invariant_coefficient_estimate} Let $n \geq 3$, and let $X$ be a Banach space for which there exist $C_X\in \R$ and $\alpha_X>0$ such that \eqref{eq=ine_Tdelta_X} holds for all $\delta,\delta' \in [-1,1]$. Then there is a function $\varepsilon_X \in C_0(\R_+)$ (depending on $C_X$ and $\alpha_X$ only) such that the following holds: for every linear isometric strongly continuous representation $\pi$ of ${\mathrm{SL}(2n-3,\R)}$ on $X$ and all $\mathrm{SO}(2n-3)$-invariant vectors $\xi\in X$ and $\eta \in X^*$, the function $\varphi(g) = \langle \pi(g) \xi,\eta\rangle$ satisfies the fact that $\varphi(D(t,0,-t))$ has a limit $c$ as $t \to \infty$, and 
\[ |\varphi(D(t,0,-t)) - c | \leq \varepsilon_X(t)  \|\xi\|_X \|\eta\|_{X^*}.\]
\end{prop}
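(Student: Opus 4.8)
The plan is to mirror exactly the combinatorial argument that proved Proposition \ref{prop=invariant_multiplier_estimate}, replacing each invocation of ``$S^p$-multiplier'' by ``coefficient of an isometric representation on $X$'' and the norm $\|\cdot\|_{M(S^p)}$ by the coefficient norm $\|\xi\|_X\|\eta\|_{X^*}$ (or more precisely the infimum of this quantity over all representations and vectors implementing $\varphi$). The point, as the paragraph preceding the statement already spells out, is that both the multiplier norm and the coefficient norm satisfy the same restriction-stability property: if $\varphi(g)=\langle\pi(g)\xi,\eta\rangle$ with $\pi$ an isometric representation of $G=\mathrm{SL}(2n-3,\R)$ and $H$ is a closed subgroup, then for fixed $g_0,g_0'\in G$ the function $h\mapsto\varphi(g_0 h g_0')=\langle\pi(h)(\pi(g_0)\xi),(\pi(g_0')^*\eta)\rangle$ on $H$ is again a coefficient, with controlled norm (indeed $\|\pi(g_0)\xi\|_X=\|\xi\|_X$ and $\|\pi(g_0')^*\eta\|_{X^*}=\|\eta\|_{X^*}$ by isometry). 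This is the only structural input used in the proof of Proposition \ref{prop=invariant_multiplier_estimate}, so the whole chain of lemmas goes through verbatim.

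Concretely, I would proceed as follows. First, replay the proof of Lemma \ref{lemma=multipliers_SLn}: fix $t<u<v$, set $s=v+u-t$, $D=\mathrm{diag}(e^{s/2},e^{t/2},\dots,e^{t/2})$, and observe that $g\in\mathrm{SO}(n)\mapsto\varphi(DgD)$ is an $\mathrm{SO}(n-1)$-bi-invariant coefficient of an isometric representation of $\mathrm{SO}(n)$ on $X$ with coefficient norm at most $\|\xi\|_X\|\eta\|_{X^*}$; apply Lemma \ref{lemma=coefficients_Banach_rep_SOn} in place of Lemma \ref{lemma=mutlipliers_SOn} to get, for rotations $g,g'$ of angles $\theta,\theta'$ in the first two coordinates, the estimate $|\varphi(DgD)-\varphi(Dg'D)|\le C_X\max(|\cos\theta|^{\alpha_X},|\cos\theta'|^{\alpha_X})\|\xi\|_X\|\eta\|_{X^*}$; then use the same elementary Cartan-decomposition computation (the one cited as \cite[Lemme 2.8]{lafforguestrengthenedpropertyt} / \cite[Lemma 5.5]{haagerupdelaat}) to translate this into a bound on $|\varphi(\mathrm{diag}(e^v,e^u,e^t,\dots))-\varphi(\mathrm{diag}(e^{v+\delta},e^{u-\delta},e^t,\dots))|$ by $C_X e^{-\alpha_X(u-t-\delta)}\|\xi\|_X\|\eta\|_{X^*}$. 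Next, replay Lemma \ref{lemma=multipliers_SLr} by telescoping along the $n-2$ blocks of size-$e^v$ entries (paying a factor $n-2$), and replay Lemma \ref{lemma=multipliers_SLr_bis} by conjugating with the Cartan automorphism $g\mapsto(g^t)^{-1}$ (which sends isometric representations to isometric representations). Finally, combine the two to obtain, for $0<\delta<v$,
\[ |\varphi(D(v,0,-v))-\varphi(D(v+\tfrac{\delta}{n-2},0,-v-\tfrac{\delta}{n-2}))|\le 4(n-2)C_X e^{-\alpha_X(v-\delta)}\|\xi\|_X\|\eta\|_{X^*},\]
and run the same Cauchy-criterion summation (take $\delta=t/2$, $v_{k+1}=\min(s,v_k+\delta/(n-2))$) to conclude that $\varphi(D(t,0,-t))$ converges to some $c$ with the explicit tail bound $|\varphi(D(t,0,-t))-c|\le 4(n-2)C_X\,\dfrac{e^{-\alpha_X t/2}}{1-e^{-\alpha_X t/(2n-4)}}\,\|\xi\|_X\|\eta\|_{X^*}$; setting $\varepsilon_X(t)$ equal to this right-hand side divided by $\|\xi\|_X\|\eta\|_{X^*}$ (a function in $C_0(\R_+)$ depending only on $C_X,\alpha_X$) gives the claim.

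I do not expect any genuine obstacle here: the analytic content — the Hölder-in-$\delta$ control of the spherical averaging operators on the Banach-valued $L^2$ — has already been isolated in Lemma \ref{lemma=estimates_Tdelta_X} and Lemma \ref{lemma=coefficients_Banach_rep_SOn}, and the geometric content is the same $2\times 2$ Cartan computation used in the scalar case. The one point requiring a line of care is the verification that the restriction/translation and Cartan-automorphism operations genuinely preserve the class of coefficients of isometric representations with the right norm control; this is straightforward since $\pi(g_0)$ and $\pi(g_0')^{-1}$ are isometries and the Cartan automorphism is a topological group automorphism, so precomposing $\pi$ with it yields another strongly continuous isometric representation on the same $X$. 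Thus the proof is, as the text promises, ``exactly the same proof as for Proposition \ref{prop=invariant_multiplier_estimate}.''
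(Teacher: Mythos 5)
Your proposal is correct and follows exactly the route the paper takes: the paper itself explicitly notes that the only property of $S^p$-multipliers used in the proof of Proposition \ref{prop=invariant_multiplier_estimate} is the restriction-stability of the norm under $h\mapsto\varphi(g_0 h g_0')$, that coefficients of isometric representations share this property (and are preserved by precomposition with the Cartan automorphism), and that the argument then carries over verbatim starting from Lemma \ref{lemma=coefficients_Banach_rep_SOn} in place of Lemma \ref{lemma=mutlipliers_SOn}. (Minor slip: you should have $\varphi(g_0 h g_0')=\langle\pi(h)\pi(g_0')\xi,\pi(g_0)^*\eta\rangle$, with $g_0$ and $g_0'$ swapped relative to what you wrote, but this does not affect the norm control.)
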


\subsection{Non-coarse-embeddability}
Proposition \ref{prop=invariant_coefficient_estimate} leads to the following theorem.
\begin{thm}\label{thm=TBanachiqueG} Let $X$ be a superreflexive Banach space satisfying \eqref{eq=d_kX_grows_slowly}. Then there exists an integer $N \geq 9$ such that for every connected simple Lie group $G$ of real rank at least $N$ the following holds: there is a sequence of compactly supported symmetric probability measures $m_l$ on $G$ such that for every isometric representation $\pi \colon G \to O(X)$, the sequence $(\pi(m_l))$ converges to a projection on $X^{\pi(G)}$. Moreover, the integer $N$ and the measures $m_l$ depend only on the value of $\beta$ in \eqref{eq=d_kX_grows_slowly}.
\end{thm}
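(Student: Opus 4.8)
The plan is the following. Given the exponent $\beta<\frac{1}{2}$ of \eqref{eq=d_kX_grows_slowly}, I would first fix an integer $n\ge 6$ large enough that $\beta<\frac{1}{2}\bigl(1-\frac{1}{n-1}\bigr)$ and set $N:=2n-3\ (\ge 9)$; by Lemma~\ref{lemma=estimates_Tdelta_X}, $X$ then satisfies \eqref{eq=ine_Tdelta_X} with constants depending only on $\beta$. Let $G$ be a connected simple real Lie group of real rank $\ge N$. By Lemma~\ref{lemma=dynkin} and the Mostow closedness argument in its proof (applied to the subalgebra $\mathfrak{sl}_{2n-3}$), $G$ contains a closed connected subgroup $H$ locally isomorphic to $\mathrm{SL}(2n-3,\R)$, with finite centre. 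Fix a maximal compact subgroup $K\subset H$ (a finite cover of $\mathrm{SO}(2n-3)$) and, inside the corresponding maximal $\R$-split torus, the elements $a_t:=D(t,0,-t)$ of \eqref{eq=def_Duvt}, which leave every compact subset of $H$ as $t\to\infty$. I would then fix once and for all a sequence $t_l\to\infty$ (e.g. $t_l=l$) and put
\[ m_l:=\chi_K\ast\delta_{a_{t_l}}\ast\chi_K, \]
a compactly supported probability measure on $H\subset G$, where $\chi_K$ is the Haar probability measure on $K$; it is symmetric since $a_{t_l}^{-1}\in K a_{t_l} K$ (and if one is cautious one may replace $m_l$ by $\tfrac12(m_l+\check m_l)$). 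The integer $N$ and the $m_l$ depend only on $\beta$.

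Next, fix an isometric representation $\pi\colon G\to O(X)$ and write $P:=\pi(\chi_K)$, a norm-one projection onto $X^{\pi(K)}$, so that $\pi(m_l)=P\,\pi(a_{t_l})\,P$. The estimates of Section~\ref{sec=sphere}, hence Proposition~\ref{prop=invariant_coefficient_estimate}, are insensitive to replacing $\mathrm{SL}(2n-3,\R)$ by $H$: only the representations occurring in $L^2(\sphere^{n-1})$ enter the operators $T_\delta$, and those factor through $\mathrm{SO}(n)$, so the spherical analysis of Section~\ref{sec=sphere} is unchanged under passing to finite covers. Applying the conclusion of Proposition~\ref{prop=invariant_coefficient_estimate} to $\pi|_H$ with the $K$-invariant vectors $P\zeta$ and $P^*\omega$ (of norm $\le\|\zeta\|,\|\omega\|$), and taking the supremum over the unit balls of $X$ and $X^*$, one gets $\|\pi(m_l)-\pi(m_{l'})\|\le\varepsilon_X(t_l)+\varepsilon_X(t_{l'})$ with $\varepsilon_X\in C_0(\R_+)$ depending only on $\beta$. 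Hence $(\pi(m_l))_l$ is Cauchy in operator norm and converges to some $E\in B(X)$ with $\|E\|\le1$. Up to this point only \eqref{eq=d_kX_grows_slowly} has been used.

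It remains to identify $E$ with a projection onto $X^{\pi(G)}$, and this is where superreflexivity enters. After a $G$-equivariant renorming we may assume $X$ uniformly convex and uniformly smooth (\cite[Prop.~2.3]{BFGM}); \eqref{eq=d_kX_grows_slowly} and \eqref{eq=ine_Tdelta_X} survive such a renorming, and by \cite[Prop.~2.6]{BFGM} the subspace $X^{\pi(H)}$ has an $H$-invariant complement $V$. First I would check $X^{\pi(H)}=X^{\pi(G)}$: a nonzero $H$-fixed vector with nonzero image in the $G$-invariant complement of $X^{\pi(G)}$ would yield a matrix coefficient of an invariant-vector-free representation of $G$ that is constant and nonzero along a sequence in $H$ leaving every compact set of $G$, contradicting the Banach-space Howe--Moore property for $G$ on superreflexive spaces (Shalom, \cite[Theorem~9.1]{BFGM}). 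Since $m_l$ is supported on $H$, $\pi(m_l)$ preserves the splitting $X=X^{\pi(G)}\oplus V$; it equals the identity on $X^{\pi(G)}$, while on $V$ it equals $P|_V\,\pi(a_{t_l})\,P|_V$ and converges in norm to an operator $Q_V$ with $\langle Q_V\zeta,\omega\rangle=\lim_l\langle\pi(a_{t_l})P\zeta,P^*\omega\rangle$; as $\pi|_V$ has no $H$-invariant vectors and $a_{t_l}\to\infty$ in $H$, Howe--Moore for $H$ forces $Q_V=0$. Therefore $E=\mathrm{Id}_{X^{\pi(G)}}\oplus 0_V$ is a norm-one projection onto $X^{\pi(G)}$, as required.

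I expect the last paragraph to be the main obstacle. The convergence of $(\pi(m_l))_l$ is essentially automatic once Proposition~\ref{prop=invariant_coefficient_estimate} is available --- it is exactly the $C_0$-estimate that makes the sequence Cauchy --- but pinning down the limit as a projection onto the invariant vectors requires both the uniform-convexity renorming and the ability to split off $X^{\pi(G)}$ and run a mixing argument, and the only tools I know for this are those of \cite{BFGM} together with Shalom's Howe--Moore theorem. This is precisely why the superreflexivity hypothesis is needed in this statement, and why the non-superreflexive case of Theorem~\ref{thm=expanders} must be treated differently, via $X$-valued $L^2$-spaces and a restricted class of representations.
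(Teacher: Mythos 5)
Your proposal is correct and follows essentially the same route as the paper: the same choice of $n$ with $\beta<\frac{1}{2}\left(1-\frac{1}{n-1}\right)$ and $N=2n-3$, the same $K$-bi-averaged measures at $D(l,0,-l)$, norm convergence of $(\pi(m_l))$ via Proposition \ref{prop=invariant_coefficient_estimate}, and identification of the limit via the renorming and complementation results of \cite{BFGM} together with Shalom's Howe--Moore theorem. The only difference is organizational: the paper first treats $G=\mathrm{SL}(2n-3,\R)$ itself and then invokes the standard reduction for groups containing a closed subgroup locally isomorphic to it, whereas you carry out that reduction explicitly by supporting the measures on $H$ and verifying $X^{\pi(H)}=X^{\pi(G)}$ with Howe--Moore for $G$.
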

\begin{rem}
The above theorem directly implies that for every $\beta < \frac{1}{2}$, there exists an $N \geq 9$ such that every connected simple Lie group with real rank at least $N$ has property (T$_{\mathcal{E}}^{\mathrm{proj}}$) with respect to the class of superreflexive Banach spaces satisfying \eqref{eq=d_kX_grows_slowly} for the given value of $\beta$ and a fixed $C>0$.
\end{rem}
\begin{proof}
By our assumption, there exist $C>0$ and $\beta < \frac 1 2$ such that $d_k(X) \leq C k^\beta$ for all $k$. Take $n$ so that $\beta < \frac 1 2(1- \frac{1}{n-1})$. Our main task is to prove the theorem for $\mathrm{SL}(2n-3,\R)$. Let now $G=\mathrm{SL}(2n-3,\mathbb{R})$ and define a sequence of symmetric probability measures $m_l$ by
\[ m_l(f) = \iint_{\mathrm{SO}(2n-3) \times \mathrm{SO}(2n-3)} f(k D(l,0,-l) k') dk dk',\]
where the notation $D(l,0,-l)$ was introduced in \eqref{eq=def_Duvt}.
Let $\pi$ be an isometric strongly continuous representation of $G$ on $X$. By Proposition \ref{prop=invariant_coefficient_estimate}, $\pi(m_l)$ has a limit $P$ in the norm topology of $B(X)$. We claim that $P$ is a projection on $X^{\pi(G)}$. This is where we use the assumption that $X$ is superreflexive. By \cite[Proposition 2.3]{BFGM}, we can assume that the norm on $X$ is uniformly convex and uniformly smooth. It is clear that $P$ acts as the identity on $X^{\pi(G)}$. By \cite[Proposition 2.6]{BFGM}, $X^{\pi(G)}$ has a $G$-invariant complement closed subspace. By replacing $X$ by this complement subspace, we can assume that $X^{\pi(G)}=0$, and we have to prove that $P=0$. This follows from the version of the Howe-Moore property proved by Shalom (see \cite[Theorem 9.1]{BFGM}).

Now standard arguments (see, e.g., \cite[Section 1.6]{bekkadelaharpevalette} or \cite[Section 4]{lafforguestrengthenedpropertyt}) imply that the conclusion of the theorem holds for every connected simple real Lie group containing a closed subgroup locally isomorphic to $\mathrm{SL}(2n-3,\R)$, and hence, by Lemma \ref{lemma=dynkin}, for every simple real Lie group of real rank $\geq \max\{9,2n-3\}$.
\end{proof}
\begin{rem}\label{rem=TBanachiquebis} If $X$ satisfies \eqref{eq=d_kX_grows_slowly} but is not superreflexive, the conclusion of the above theorem still holds for the representations of the form $\pi \otimes 1_X$ on $L^2(\Omega,\mu;X)$, where $\pi$ is a unitary representation on $L^2(\Omega,\mu)$ that comes from a measure-preserving action of $G$ on a $\sigma$-finite measure space $(\Omega,\mu)$.

Indeed, as in the proof above, it is sufficient to show that if $n$ is such that $\beta < \frac{1}{2}(1-\frac{1}{n-1})$, any representation of $\mathrm{SL}(2n-3,\mathbb{R})$ of the form $\pi \otimes 1_X$ as above satisfies that $(\pi \otimes 1_X)(m_l)=\pi(m_l) \otimes 1_X$ converges in the norm topology to a projection onto the $\mathrm{SL}(2n-3,\mathbb{R})$-invariant vectors. On the one hand, by Fubini's theorem, $e_k(L^2(\Omega;X))=e_k(X)$ for all $k$, and hence, by \eqref{eq=en_and_BMdistance}, $L^2(\Omega,\mu;X)$ satisfies \eqref{eq=d_kX_grows_slowly} for the same $\beta$ as $X$. Proposition \ref{prop=invariant_coefficient_estimate} then implies that $(\pi \otimes 1_X)(m_l)$ has a limit in the norm topology. On the other hand, by the Howe-Moore property for (unitary representations of) $\mathrm{SL}(2n-3,\mathbb{R})$, the sequence $\pi(m_l)$ converges in the weak operator topology to the orthogonal projection $P$ on the $G$-invariant vectors on $L^2(\Omega,\mu)$, which shows that the limit of $\pi(m_l) \otimes 1_X$ is $P \otimes 1$, a projection onto the $\mathrm{SL}(2n-3,\mathbb{R})$-invariant vectors.
\end{rem}

By the argument recalled in Section \ref{subsec=coarse_embedd}, Theorem \ref{thm=TBanachiqueG} (if $X$ is superreflexive) or Remark \ref{rem=TBanachiquebis} (if $X$ not superreflexive) imply the following result.
\begin{thm}\label{thm=expanders_general} Let $X$ be a Banach space satisfying \eqref{eq=d_kX_grows_slowly}. Then there exists a natural number $N \geq 9$ such that if $\Gamma$ is a lattice in a connected simple real Lie group of real rank at least $N$, if $(\Gamma_i)_{i \in \mathbb{N}}$ is a sequence of finite-index subgroups of $\Gamma$ such that $|\Gamma / \Gamma_i| \to \infty$ for $i \to \infty$ and if $S$ is a symmetric finite generating set of $\Gamma$, then the sequence of Schreier graphs $(\Gamma/\Gamma_i,S)$ does not coarsely embed into $X$.
\end{thm}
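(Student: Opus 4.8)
The plan is to combine the reduction recalled in Section~\ref{subsec=coarse_embedd} with the Banach-space property~(T) furnished by Theorem~\ref{thm=TBanachiqueG} and Remark~\ref{rem=TBanachiquebis}. Fix $X$ with $d_k(X)\le Ck^\beta$ for some $\beta<\frac12$ and $C>0$, and let the integer $N\ge 9$ together with the compactly supported symmetric probability measures $(m_l)_l$ be those produced by Theorem~\ref{thm=TBanachiqueG}; recall that both $N$ and the $m_l$ depend only on $\beta$. I would then fix a connected simple real Lie group $G$ of real rank at least $N$, a lattice $\Gamma<G$, a sequence $(\Gamma_i)$ of finite-index subgroups with $|\Gamma/\Gamma_i|\to\infty$, and a symmetric finite generating set $S$ of $\Gamma$, and form, as in Section~\ref{subsec=coarse_embedd}, the representation $\pi_X=\oplus_i\pi_i$ of $\Gamma$ on $\ell^2(\sqcup_i\Gamma/\Gamma_i;X)$, its induction $\mathrm{Ind}_\Gamma^G\pi_X$ on the space $X'$, and the $G$-invariant decomposition $X'=(X')^G\oplus Y'$, where $Y'$ consists of the functions valued in the mean-zero part $Y=\oplus_i\ell^2_0(\Gamma/\Gamma_i;X)$.

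The first step is to show that $\|\mathrm{Ind}_\Gamma^G\pi_X(m_l)\|_{B(Y')}\to 0$ as $l\to\infty$. For this I would invoke the identification, recorded at the end of Section~\ref{subsec=coarse_embedd}, of $\mathrm{Ind}_\Gamma^G\pi_X$ with the representation $\pi\otimes 1_X$ on $L^2(\Omega;X)$, where $\pi$ is the Koopman unitary representation attached to the measure-preserving action of $G$ on the $\sigma$-finite space $\Omega=\sqcup_i(G\times_\Gamma\Gamma/\Gamma_i)$ (each summand carries finite mass, since $G/\Gamma$ has mass $1$ and the fibres are finite). Remark~\ref{rem=TBanachiquebis} then applies and gives that $\mathrm{Ind}_\Gamma^G\pi_X(m_l)=\pi(m_l)\otimes 1_X$ converges in operator norm to the projection onto the space of $G$-invariant vectors of $\mathrm{Ind}_\Gamma^G\pi_X$, which by Section~\ref{subsec=coarse_embedd} is exactly $(X')^G$; this limit projection vanishes on the complement $Y'$. (When $X$ is superreflexive one may instead observe that $X'\cong L^2(\Omega;X)$ is superreflexive and, by Fubini together with \eqref{eq=en_and_BMdistance}, still satisfies \eqref{eq=d_kX_grows_slowly} with the same $\beta$, so that Theorem~\ref{thm=TBanachiqueG} — with the same $N$ and $m_l$ — applies to $X'$ directly.) In particular there exists $l_0$ with $\|\mathrm{Ind}_\Gamma^G\pi_X(m_{l_0})\|_{B(Y')}\le\frac14$.

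The second step is to feed $\nu:=m_{l_0}$, a compactly supported probability measure on $G$ with $\nu(1)=1$, into Lafforgue's argument of Section~\ref{subsec=coarse_embedd}. That argument then produces a finitely supported probability measure $\mu$ on $\Gamma$ with $\|\pi_X(\mu)\|_{B(Y)}\le\frac12$, hence the Poincaré-type inequality \eqref{eq=poincare} for every $i$ and every mean-zero $f_i\colon\Gamma/\Gamma_i\to X$. As explained there, \eqref{eq=poincare} forces any family of $1$-Lipschitz maps $f_i\colon(\Gamma/\Gamma_i,S)\to X$ (reduced to have mean zero) to satisfy $\|f_i(x)\|_X\le 2\sqrt 2\,K$, with $K=\max\{|\gamma|_S:\mu(\gamma)>0\}$, for at least half of the vertices $x$, whereas the bounded-degree assumption makes the typical distance in $(\Gamma/\Gamma_i,S)$ at least of order $\log|\Gamma/\Gamma_i|\to\infty$; hence $(f_i)$ cannot be a coarse embedding, which is precisely the conclusion of the theorem.

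All of the genuinely new analytic input — the harmonic analysis on $\sphere^{n-1}$ and the resulting H\"older-type estimates for coefficients of isometric representations — is already encapsulated in Theorem~\ref{thm=TBanachiqueG} and Remark~\ref{rem=TBanachiquebis}, and the combinatorial reduction is encapsulated in Section~\ref{subsec=coarse_embedd}, so no single hard estimate remains. The step requiring the most attention is the first one: one must verify that $\mathrm{Ind}_\Gamma^G\pi_X$ genuinely falls within the scope of Remark~\ref{rem=TBanachiquebis} — that is, that it is of the form $\pi\otimes 1_X$ for a measure-preserving $G$-action on a $\sigma$-finite space, and that the identification of its $G$-invariant vectors with $(X')^G$ makes the limit projection kill $Y'$ — and that the measures $m_l$ of Theorem~\ref{thm=TBanachiqueG}, depending only on $\beta$, are admissible choices for $\nu$ irrespective of whether $X$ is superreflexive.
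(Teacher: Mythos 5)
Your proposal is correct and follows exactly the route the paper intends: the one-line proof in the paper says precisely that Theorem~\ref{thm=TBanachiqueG} (superreflexive case) or Remark~\ref{rem=TBanachiquebis} (general case) combined with the reduction of Section~\ref{subsec=coarse_embedd} yields the result, and you have unfolded those steps faithfully, including the key identification of $\mathrm{Ind}_\Gamma^G\pi_X$ with $\pi\otimes 1_X$ on $L^2(\sqcup_i(G\times_\Gamma\Gamma/\Gamma_i);X)$ and the observation that the measures $m_l$ and the rank bound $N$ depend only on $\beta$. The only thing worth being slightly more explicit about is why the limit projection vanishes on $Y'$: since $Y'$ is a closed $G$-invariant subspace, each $\mathrm{Ind}_\Gamma^G\pi_X(m_l)$ preserves it, hence so does the norm-limit $P$, and $P(Y')\subseteq (X')^G\cap Y'=\{0\}$; you implicitly use this but it is worth stating.
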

Theorem \ref{thm=expanders} follows as a particular case of Theorem \ref{thm=expanders_general}.

\begin{rem}
(Remark added after publication.) By using Veech's version of the Howe-Moore property \cite{veech}, which holds in the more general setting of reflexive Banach spaces rather than superreflexive ones, the superreflexivity assumption in Theorems 1.4 and 5.8 can be replaced by the assumption that $X$ is reflexive.
\end{rem}


\begin{thebibliography}{99}

\bibitem{BFGM}
U.~Bader, A.~Furman, T.~Gelander and N.~Monod, \emph{Property (T) and rigidity for actions on Banach spaces}, Acta Math.~\textbf{198} (2007), 57--105.

\bibitem{BGM}
U.~Bader, T.~Gelander and N.~Monod, \emph{A fixed point theorem for {$L^1$}-spaces}, Invent.~Math.~\textbf{189} (2012), 143--148.

\bibitem{bekkadelaharpevalette}
B.~Bekka, P.~de la Harpe and A.~Valette, \emph{Kazhdan's Property (T)}, Cambridge University Press, Cambridge, 2008.

\bibitem{chatterjikassabov}
I.~Chatterji and M.~Kassabov, \emph{New examples of finitely presented groups with strong fixed point properties}, J.~Topol.~Anal.~\textbf{1} (2009), 1--12.

\bibitem{connes}
A.~Connes, \emph{Classification des facteurs}, in:~Operator algebras and applications, Part 2 (Kingston, Ont., 1980), pp.~43--109, Proc.~Sympos.~Pure Math.~38, Amer.~Math.~Soc., Providence, RI, 1982.

\bibitem{dorofaeff}
B.~Dorofaeff, \emph{Weak amenability and semidirect products in simple Lie groups}, Math.~Ann.~\textbf{306} (1996), 737--742.

\bibitem{vandijk}
G.~van Dijk, \emph{{Introduction to Harmonic Analysis and Generalized Gelfand Pairs}}, de Gruyter, Berlin, 2009.

\bibitem{dynkin}
E.B.~Dynkin, \emph{Semisimple subalgebras of semisimple Lie algebras} (Russian), Mat.~Sbornik N.S.~\textbf{30} (1952), 349--462.

\bibitem{dynkinselectedworks}
E.B.~Dynkin, \emph{Selected papers of E.B.~Dynkin with commentary}, edited by A.A.~Yushkevich, G.M.~Seitz and A.L.~Onishchik, Amer.~Math.~Soc., Providence, RI; International Press, Cambridge, MA, 2000.

\bibitem{eymard}
P.~Eymard, \emph{{L'alg\`ebre de Fourier d'un groupe localement compact}}, Bull.~Soc.~Math.~France \textbf{92} (1964), 181--236.

\bibitem{haagerupdelaat}
U.~Haagerup and T.~de Laat, \emph{Simple Lie groups without the Approximation Property}, Duke Math.~J.~\textbf{162} (2013), 925--964.

\bibitem{haagerupdelaat2}
U.~Haagerup and T.~de Laat, \emph{Simple Lie groups without the Approximation Property II}, to appear in Trans.~Amer.~Math.~Soc.

\bibitem{junge}
M.~Junge, \emph{Applications of Fubini's Theorem for noncommutative $L^p$-spaces}, unpublished manuscript.

\bibitem{jungeruan}
M.~Junge and Z.-J.~Ruan, \emph{Approximation properties for noncommutative $L^p$-spaces associated with discrete groups}, Duke Math.~J.~\textbf{117} (2003), 313--341.

\bibitem{kosaki}
H.~Kosaki, \emph{Applications of the complex interpolation method to a von Neumann algebra: non-commutative $L^p$-spaces}, J.~Funct.~Anal.~\textbf{56} (1984), 29--78.

\bibitem{delaat1}
T.~de Laat, \emph{Approximation properties for noncommutative $L^p$-spaces associated with lattices in Lie groups},
J.~Funct.~Anal.~\textbf{264} (2013), 2300--2322.

\bibitem{delaatdelasalle}
T.~de Laat and M.~de la Salle, \emph{Strong property {(T)} for higher rank simple Lie groups}, preprint (2014), arXiv:1401.3611.

\bibitem{lafforguestrengthenedpropertyt}
V.~Lafforgue, \emph{Un renforcement de la propri\'et\'e {(T)}}, Duke Math.~J.~\textbf{143} (2008), 559--602.
  
\bibitem{lafforguefastfourier}
V.~Lafforgue, \emph{Propri\'et\'e (T) renforc\'ee banachique et transformation de Fourier rapide}, J.~Topol.~Anal. \textbf{1} (2009), 191--206.

\bibitem{lafforguedelasalle}
V.~Lafforgue and M.~de la Salle, \emph{Noncommutative $L^p$-spaces without the completely bounded approximation property}, Duke.~Math.~J.~\textbf{160} (2011), 71--116.

\bibitem{lubotzky}
A.~Lubotzky, \emph{Discrete Groups, Expanding Graphs and Invariant Measures}, Birkh\"auser Verlag, Basel, 2010.

\bibitem{mendelnaor}
M.~Mendel and A.~Naor, \emph{Nonlinear spectral calculus and super-expanders}, Publ.~Math.~Inst.~Hautes \'Etudes Sci.~\textbf{119} (2014), 1--95.

\bibitem{milmanwolfson}
V.D.~Milman and H.~Wolfson, \emph{Minkowski spaces with extremal distance from the {E}uclidean space}, Israel J.~Math.~\textbf{29} (1978), 113--131.

\bibitem{monod}
N.~Monod, \emph{Continuous Bounded Cohomology of Locally Compact Groups}, Springer-Verlag, Berlin, 2001.

\bibitem{mostow}
G.D.~Mostow, \emph{The extensibility of local Lie groups of transformations and groups on surfaces}, Ann.~of Math.~(2) \textbf{52} (1950), 606--636.

\bibitem{nowakyu}
P.W.~Nowak and G.~Yu, \emph{Large Scale Geometry}, European Mathematical Society, Z\"urich, 2012.

\bibitem{pietsch}
A.~Pietsch, \emph{Eigenvalue distribution and geometry of Banach spaces}, Math.~Nachr.~\textbf{150} (1991), 41--81.

\bibitem{pisierSAF14}
G.~Pisier, \emph{Sur les espaces de {B}anach de dimension finie \`a distance extr\'emale d'un espace euclidien [d'apr\`es {V}. {D}. {M}ilman et {H}.
  {W}olfson]}, S\'eminaire d'{A}nalyse {F}onctionnelle (1978--1979), Exp.~No.~16, \'Ecole Polytech., Palaiseau, 1979.

\bibitem{pisiermemAMS}
G.~Pisier, \emph{Complex interpolation between Hilbert, Banach and operator spaces}, Mem.~Amer.~Math.~Soc. \textbf{208} (2010), no.~978.

\bibitem{delasalle1}
M.~de la Salle, \emph{Towards Banach space strong property (T) for $\mathrm{SL}(3,\mathbb{R})$}, to appear in Israel J.~Math.

\bibitem{tomczak}
N.~Tomczak-Jaegermann, \emph{Computing {$2$}-summing norm with few vectors}, Ark.~Mat.~\textbf{17} (1979), 273--277.

\bibitem{tomczakBook}
N.~Tomczak-Jaegermann, \emph{Banach-Mazur distances and finite-dimensional operator ideals}, Longman Scientific \& Technical, Harlow, 1989.

\bibitem{veech}
W.A.~Veech, \emph{Weakly almost periodic functions on semisimple Lie groups}, Monatsh.~Math.~\textbf{88} (1979), 55--68.

\end{thebibliography}
\end{document}